\definecolor{gr}{rgb}   {0.,   0.69,   0.23 }
\definecolor{bl}{rgb}   {0.,   0.5,   1. }
\definecolor{mg}{rgb}   {0.85,  0.,    0.85}
\definecolor{yl}{rgb}   {0.8,  0.7,   0.}
\definecolor{or}{rgb}  {0.7,0.2,0.2}
\newcommand{\D}{\mathcal{D}}
\newcommand{\Pt}[1]
{\mathcal{P}_{#1}}
\newcommand{\rhoo}{\vec{\rho}}
\newcommand{\muu}{\vec{\mu}}
\newtheorem{theorem}{Theorem} [section]
\newtheorem{lemma}[theorem]{Lemma}
\newtheorem{proposition}[theorem]{Proposition}
\newtheorem{remark}[theorem]{Remark}
\newtheorem{corollary}[theorem]{Corollary}
\DeclareMathOperator*{\intt}{\int}
\DeclareMathOperator*{\iintt}{\iint}
\DeclareMathOperator*{\supp}{supp}
\DeclareMathOperator{\Law}{Law}
\DeclareMathOperator{\Id}{Id}
\newcommand{\II}{\text{I \hspace{-2.8mm} I} }
\newcommand{\W}{\mathcal{W}}
\newcommand{\noi}{\noindent}
\newcommand{\Z}{\mathbb{Z}}
\newcommand{\R}{\mathbb{R}}
\newcommand{\T}{\mathbb{T}}
\newcommand{\N}{\mathbb{N}}
\let\Re=\undefined\DeclareMathOperator*{\Re}{Re}
\let\Im=\undefined\DeclareMathOperator*{\Im}{Im}
\let\P= \undefined
\newcommand{\P}{\mathbf{P}}
\newcommand{\PP}{\mathbb{P}}
\newcommand{\F}{\mathcal{F}}
\newcommand{\NN}{\mathcal{N}}
\newcommand{\I}{\mathcal{I}}
\newcommand{\EE}{\mathcal{E}}
\renewcommand{\L}{\mathcal{L}}
\renewcommand{\v}{{\mathbf v}}
\newcommand{\al}{\alpha}
\newcommand{\be}{\beta}
\newcommand{\dl}{\delta}
\newcommand{\Dl}{\Delta}
\newcommand{\eps}{\varepsilon}
\newcommand{\g}{\gamma}
\newcommand{\ld}{\lambda}
\newcommand{\Ld}{\Lambda}
\newcommand{\ft}{\widehat}
\newcommand{\wt}{\widetilde}
\newcommand{\cj}{\overline}
\newcommand{\dx}{\partial_x}
\newcommand{\dt}{\partial_t}
\newcommand{\dd}{\partial}
\newcommand{\0}{\mathbf 0}
\newcommand{\jb}[1]
{\langle #1 \rangle}
\renewcommand{\u}{\vec{{\mathbf u}}}
\renewcommand{\H}{\mathcal{H}}
\renewcommand{\rhoo}{\vec{{\mathbf \rho}}}
\newcommand{\les}{\lesssim}
\newcommand{\ind}{\mathbf 1}
\newcommand{\E}{\mathbb{E}}
\renewcommand{\o}{\omega}
\renewcommand{\O}{\Omega}
\numberwithin{equation}{section}
\numberwithin{theorem}{section}
\newtheorem*{ackno}{Acknowledgements}
\DeclareMathOperator{\Lip}{Lip}
\tikzset{
	dot/.style={circle,fill=black,draw=black,inner sep=1pt,minimum size=0.5mm},
	>=stealth,
	}
\tikzset{
	ddot/.style={circle,fill=white,draw=black,inner sep=2pt,minimum size=0.8mm},
	>=stealth,
	}
\tikzset{decision/.style={ 
        draw,
        diamond,
        aspect=1.5
    }}
\tikzset{dia2/.style
={diamond,fill=white,draw=black,inner sep=0pt,minimum size=1mm},
	>=stealth,
	}
\tikzset{dia/.style
={star,fill=black,draw=black,inner sep=0pt,minimum size=1mm},
	>=stealth,
	}
\def\DeclareSymbol#1#2#3{\expandafter\gdef\csname MH@symb@#1\endcsname{\tikz[baseline=#2,scale=0.15]{#3}}}
\def\<#1>{\csname MH@symb@#1\endcsname}
\tikzstyle{dot1} = [ draw=  gray!00, 
\tikzstyle{dot2} = [ draw=  black, 
\tikzstyle{dot3} = [ draw=  gray!00, 
\newsavebox{\sticka}
\savebox{\sticka}[4pt]{
\begin{tikzpicture}[x=1pt, y=1pt]
\draw[line width= 1] (0,0) -- (0,6); 
\path [draw, fill] (0,7.5) circle (1.5pt); 
\end{tikzpicture}
}
\def\stick{\mathchoice
    {\raisebox{-1pt}{\usebox{\sticka}}}%
    {\raisebox{-1pt}{\usebox{\sticka}}}%
    {}%
    {}}
\begin{document}

\baselineskip = 14pt

\title[Exponential ergodicity for SdSG]
{Exponential ergodicity for the stochastic hyperbolic sine-Gordon equation on the circle}

\author[K.~Seong]
{Kihoon Seong}

\address{Kihoon Seong\\
Department of Mathematics\\
Cornell University\\ 
310 Malott Hall\\ 
Cornell University\\
Ithaca\\ New York 14853\\ 
USA }

\email{kihoonseong@cornell.edu}

\subjclass[2020]{35L15, 37A25, 60H15.}

\keywords{Exponential ergodicity; Unique ergodicity; Gibbs measure; Stochastic hyperbolic sine-Gordon equation.}

\begin{abstract}
In this paper, we show that the Gibbs measure of the stochastic hyperbolic sine-Gordon equation on the circle is the unique invariant measure for the Markov process. Moreover, the Markov transition probabilities converge exponentially fast to the unique invariant measure in a type of 1-Wasserstein distance. The main difficulty comes from the fact that the hyperbolic dynamics does not satisfy the strong Feller property even if sufficiently many directions in a phase space are forced by the space-time white noise forcing. We instead establish that solutions give rise to a Markov process whose transition semigroup satisfies the asymptotic strong Feller property and convergence to equilibrium
in a type of Wasserstein distance.

\end{abstract}

\maketitle

\tableofcontents

\section{Introduction}
\label{SEC:1}

We consider the following stochastic damped nonlinear wave equation, so-called the sine-Gordon equation on $\T = \R/2\pi\Z$, with an additive space-time white noise forcing:
\begin{align}
\begin{cases}
\dt^2 u + \dt u + (1- \dx^2)  u   + \g \sin(\be u) = \sqrt{2}\xi\\
(u, \dt u) |_{t = 0}=\u_0=(u_0, v_0) , 
\end{cases}
\qquad (t, x) \in \R_+\times\T,
\label{SdSG}
\end{align}

\noi
where $\g$ and $\be$ are non-zero real numbers and $\xi$ denotes a (Gaussian) space-time white noise on $\R_+\times\T$ with the space-time covariance given by
\begin{align}
\E\big[ \xi(x_1, t_1) \xi(x_2, t_2) \big]
= \dl(x_1 - x_2) \dl (t_1 - t_2).
\label{white}
\end{align}

\noi
The main goal of this paper is to establish quantified ergodicity
for \eqref{SdSG} associated with the Gibbs measure, which can be formally written as
\begin{align}
d\rhoo(u,\dt u) = \frac 1Z e^{-H(u,\dt u) } du \otimes d(\dt u).
\label{Gibbs1}
\end{align}

\noi
Here,  $Z$ denotes the partition function\footnote{In the subsequent sections, we use $Z$ to represent different normalization constants.} and 
\begin{align}
H(u,\dt u)= \frac12\int_{\T}\big(|\dx u(x)|^2 + |\dt u(x)|^2+|u(x)|^2 \big)dx -\frac\g\be \int_{\T}\cos\big(\be u(x)\big)dx
\label{Hamil1}
\end{align}

\noi
denotes the Hamiltonian of the (deterministic undamped) sine-Gordon (SG) equation:
\begin{align}
\dt^2 u +  (1- \dx^2)  u   +  \g \sin(\be u) = 0,
\label{NLW1}
\end{align}

\noi
which preserves the Hamiltonian \eqref{Hamil1} and consequently the corresponding Gibbs measure at least formally. The sine nonlinearity present in \eqref{SdSG} exhibits a close connection to models originating from relativistic and quantum field theories \cite{PS, BEMS, Fro}, which has garnered significant attention in recent years. Regarding the deterministic sine-Gordon equation \eqref{NLW1}, McKean \cite{McKean, Mc94} focused on the one-dimensional case and established an invariant Gibbs measure for \eqref{NLW1} on $\T$. The next natural question is the uniqueness of the invariant measure under the flow of \eqref{NLW1} (i.e.~unique ergodicity), which implies\footnote{As indicated by Birkhoff's ergodic theorem} that the solution of \eqref{NLW1} converges to the Gibbs measure $\rhoo$ in the sense that for every $\u_0\in \supp(\rhoo)$\footnote{$\supp(\rhoo) \subset \H^{\frac 12-}(\T)$,  where $\H^{\frac 12-}(\T)$ is the Sobolev space defined in \eqref{SOBOLEV}. } and function $f$ bounded and measurable,
\begin{align}
\lim_{T\to \infty} \frac 1T\int_0^T f(\Phi^{\text{SG}}_t(\u_0) ) dt =\int f(\u) d\rhoo(\u)
\label{BIRK}
\end{align}

\noi
where $\Phi_t^{\text{SG}}$ is the flow map of \eqref{NLW1}. We, however, point out that  the Gibbs measure $\rhoo$ is not uniquely ergodic under \eqref{NLW1}. See Remark \ref{REM:nonunique} for the explanation. Regarding the study of deterministic sine-Gordon equation, see also \cite{Dick, BFLT, BL86, GT87, KW92, WZ97, CKR04}.

The main result of the present article is that as time approaches infinity, solutions of stochastic PDE \eqref{SdSG} converge to the unique invariant measure with exponential rate. The reason for the significance of the stochastic equation \eqref{SdSG} lies in its connection to the PDE construction of the Euclidean quantum field theory measure, so-called stochastic quantization \cite{PW, RSS}. It is evident that \eqref{SdSG} can be expressed as a combination of the deterministic nonlinear wave dynamics \eqref{NLW1} and the Ornstein-Uhlenbeck dynamics for momentum $\dt u$:
\begin{align}
\dt (\dt u) = - \dt u + \sqrt 2 \xi,
\label{Lan}
\end{align}

\noi
which preserves the spatial white noise measure
\begin{align}
d\mu_0(v)=\frac 1Ze^{-\frac 12 \int v^2 dx} dv.
\label{white00}
\end{align}

\noi
Therefore, the Gibbs measure $\rhoo$ \eqref{Gibbs1}  
\begin{align}
d\rhoo(u, \dt u )&= \frac 1Z \exp\bigg( \frac \g\be \int_\T \cos\big(\be u \big) dx-\int_\T(\dx u)^2 dx-\int_\T u^2 dx -\frac 12 \int_{\T} (\dt u)^2dx \bigg) du \otimes d(\dt u) \notag \\
&=\frac 1Z \exp\bigg( \frac \g\be \int_\T \cos\big(\be u \big) dx \bigg) d\muu(u,\dt u)
\label{GibbsW}
\end{align}

\noi 
is expected to be invariant under \eqref{SdSG}, where $\muu=\mu\otimes \mu_0$. Here, $\mu$ is the massive Gaussian free field
\begin{align}
d\mu(u)=\frac 1Ze^{-\frac 12 \int (\dx u)^2+ u^2 dx} du
\label{mass0}
\end{align}

\noi
and $\mu_0$ is the white noise measure in \eqref{white00} (see Subsection \ref{SUBSEC:GFF} for each definition). The Gibbs measure \eqref{GibbsW} corresponds to the well-known model of quantum field theory, so-called the sine-Gordon measure, which can be defined by not performing renormalization, specifically for $d=1$. 
Considering the stochastic quantization perspective, the dynamical model \eqref{SdSG} corresponds to the ``canonical" stochastic quantization \cite{RSS} of the quantum sine-Gordon model, which is characterized by the Gibbs measure $\rhoo$ in \eqref{GibbsW}. 

As we mentioned above, the main result in this paper is the exponential convergence of the Markov transition probabilities to the (unique) invariant Gibbs measure $\rhoo$ for the system \eqref{SdSG}. We note that (unique) ergodicity for stochastic nonlinear wave equations has been studied extensively in \cite{BD, BDT, BOS2016, GHMR, CEGLA,TOL, FT, Ngu}. Specifically,  in \cite{GHMR}, Glatt-Holtz, Mattingly, and Richards studied ergodic invariant measures (but not Gibbs type measures) and its uniqueness for the stochastic sine-Gordon equation with smoother noise. We want to highlight that our noise $\xi$ in \eqref{SdSG} is very rough\footnote{Its spatial regularity is $C^{-\frac 12-\eps}(\T)$.}, meaning that it is just a distribution-valued random process. We, however, notice that this rough noise results in the Gibbs measure $\rhoo$ being invariant under the flow of \eqref{SdSG}. We now state our main result as follows.

\begin{theorem}\label{THM:1}
The Gibbs measure $\rhoo$ in \eqref{GibbsW} is the unique invariant measure for 
the Markov semigroup $\{\Pt{t}\}_{t \ge 0}$ associated with the flow of \eqref{SdSG}. Moreover,  there exist $T>0$ and $c>0$ such that\footnote{Here, $\Pt{t}^*$ denote the adjoint Markov semigroup $\Pt{t}^*$ on the space of probability measures.
Then, $\Pt{t}^*\nu$ can be interpreted as the probability distribution of the solution process, $\nu$ being the initial distribution.}
\begin{align}
\W_{ d}\big( \Pt{t}^*\nu, \rhoo \big)\le e^{-c \lfloor \frac t{T} \rfloor } \W_{ d}(\nu, \rhoo)
\label{EXPWASS}
\end{align}

\noi
for every $t\ge T$ and any $\nu\in \mathcal{P}r(\H^{\frac 12-})$\footnote{the space of probability measures on $\H^{\frac 12-}(\T)$, where $\H^{\frac 12-}(\T)$ is the Sobolev space defined in \eqref{SOBOLEV}. }, where $\W_{ d}$ is a type of 1-Wasserstein distance defined in \eqref{STARCONTRAC}.

\end{theorem}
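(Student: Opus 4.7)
The plan is to split Theorem~\ref{THM:1} into three interlocking pieces: (i) invariance of $\rhoo$ under the Markov semigroup $\{\Pt{t}\}$, (ii) uniqueness of the invariant measure, and (iii) the quantitative contraction \eqref{EXPWASS}. For (i) I would pass to the limit in a Galerkin approximation of \eqref{SdSG}: on each finite-dimensional truncation the truncated Gibbs measure is invariant by the standard Liouville/Langevin balance (the undamped sine-Gordon flow preserves the truncated Hamiltonian while the damping-plus-noise is the Ornstein--Uhlenbeck dynamics \eqref{Lan} for $\dt u$ preserving white noise), and a stability estimate on the solution map transfers invariance to the limit. Uniqueness and the contraction are intertwined: a sufficiently strong form of the latter automatically implies the former.

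Since the wave flow does not regularize in space, the standard strong Feller argument fails, even though $\xi$ excites every Fourier mode. Following the Hairer--Mattingly framework, I would instead establish the asymptotic strong Feller property
\[
|\nabla (\Pt{t_n}\varphi)(\u_0)| \le C\big(\|\varphi\|_\infty + \delta_n\|\nabla\varphi\|_\infty\big),
\qquad \delta_n \to 0,
\]
by a Malliavin-type construction: for each perturbation of the initial data, design a noise shift $h$ whose Malliavin cost is finite and whose effect tracks the linearized Jacobian up to an error that decays through the damping term $\dt u$. Non-degeneracy of the space-time white noise furnishes enough directions to build $h$, and the sine nonlinearity is globally bounded so the linearized cocycle is tame. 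Combined with an irreducibility statement---every neighborhood of $\supp \rhoo$ is reached with positive probability from any $\u_0 \in \H^{\frac12-}$, which follows from the nondegeneracy of the noise and the boundedness of the drift---the asymptotic strong Feller property upgrades to uniqueness.

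For the quantitative rate I would run a generalized coupling argument in the spirit of Hairer--Mattingly--Scheutzow and Butkovsky--Kulik--Scheutzow in the adapted distance $\W_d$. Given $\u_0,\u_0'\in\H^{\frac12-}$, I would couple two copies of \eqref{SdSG} driven by $\xi$ and $\xi+h$ respectively, where the deterministic shift $h$ is chosen to force the two trajectories together by a fixed time $T$. The Girsanov cost of $h$ has finite exponential moments thanks to the pointwise bound $|\sin(\be u)|\le 1$ together with the exponential decay of the linearized damped wave propagator, yielding a one-step contraction $\W_d(\Pt{T}^*\dl_{\u_0},\Pt{T}^*\dl_{\u_0'}) \le e^{-c}\, d(\u_0,\u_0')$ on a bounded set of initial data. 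A Lyapunov argument using the integrability of $H$ under $\muu$ then shows that trajectories return to such a bounded set in finite expected time, and iterating produces the claimed rate $e^{-c\lfloor t/T\rfloor}$.

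The main obstacle, and the reason the hyperbolic case is genuinely harder than the parabolic stochastic quantization setting, is the complete absence of smoothing in the deterministic flow: every gradient, Malliavin, and coupling estimate must be performed at the roughness $C^{-\frac12-}(\T)$ of the noise and at the Sobolev regularity $\H^{\frac12-}(\T)$ of $\rhoo$, without borrowing any regularity from the equation. The feature that makes the proof close---and the technical heart of the argument---is that $\sin(\be u)$ and $\be\cos(\be u)$ are uniformly bounded pointwise regardless of the roughness of $u$; this ``nonlinear boundedness'' is precisely what allows the asymptotic strong Feller step and the Girsanov cost in the coupling to be controlled without renormalization, and is what distinguishes the sine-Gordon nonlinearity from polynomial analogues at this regularity.
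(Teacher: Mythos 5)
Your overall architecture matches the paper's: invariance via finite-dimensional truncation and the Liouville/Ornstein--Uhlenbeck splitting, a Malliavin-type gradient estimate in place of strong Feller (the control $h$ is chosen adapted so that the residual between the Jacobian and the Malliavin derivative solves the linear damped wave equation and decays exponentially, with It\^o-isometry control of the Girsanov/Skorohod cost), a Girsanov-based irreducibility statement to make bounded sets small, and a Harris-type theorem \`a la Hairer--Mattingly--Scheutzow to assemble the exponential rate. Your emphasis on the uniform boundedness of $\sin(\be u)$ and $\cos(\be u)$ as the feature that closes every estimate at regularity $\H^{\frac12-}$ is exactly the right diagnosis.

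The one genuine gap is the Lyapunov function. You assert that ``a Lyapunov argument using the integrability of $H$ under $\muu$ shows that trajectories return to a bounded set in finite expected time,'' but integrability of the Hamiltonian under the Gibbs/Gaussian measure is a statement about the invariant measure, not about the dynamics started from an \emph{arbitrary} $\u_0\in\H^{\frac12-}$; it does not yield the drift condition $\E\big[\exp(\ld\|\vec\Phi_t(\u_0,\xi)\|_{\H^{1/2-}}^2)\big]\le Ce^{-\g t}\exp(\ld\|\u_0\|_{\H^{1/2-}}^2)+K$ that the Harris scheme (and the weighted gradient estimate) actually requires. The difficulty, which the paper flags explicitly, is that the damped wave flow dissipates only in the momentum component: $\frac{d}{dt}\tfrac12\|\vec L_t\|_{\H^{1/2-}}^2=-\tfrac12\|\dt L_t\|_{H^{-1/2-}}^2$, so the naive energy is not a Lyapunov functional. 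One must introduce a modified energy $V_{\mathrm{mod}}(\u)=\tfrac12\|\u\|_{\H^{1/2-}}^2+\tfrac14\|u\|_{H^{-1/2-}}^2+\tfrac12\jb{u,\dt u}_{H^{-1/2-}}$, equivalent to the original one, whose generator computation exhibits coercivity in \emph{both} components; only then does the exponential moment bound close. Without this step your ``return to a bounded set in finite expected time'' claim, and hence the iteration producing the rate $e^{-c\lfloor t/T\rfloor}$ for all initial data, is unsupported. Relatedly, your asymptotic strong Feller bound is stated with the flat Lipschitz seminorm $\|\nabla\varphi\|_\infty$, whereas the test functions relevant to the metric $d_\dl$ are only Lipschitz with the weight $e^{\ld\|\cdot\|_{\H^{1/2-}}^2}$, and converting the gradient estimate into a contraction again consumes the Lyapunov bound.
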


As explained before, the hyperbolic equation \eqref{SdSG} is composed of the deterministic Hamiltonian PDE \eqref{NLW1} and the Langevin dynamics \eqref{Lan}. Therefore, Bourgain’s invariant measure argument \cite{BO94} in the stochastic setting gives the invariance of the measure under the flow \eqref{SdSG} (see Section \ref{SEC:Invar}). In the case of stochastic PDEs with (nondegenerate) noise\footnote{i.e.~sufficiently many directions in  phase space are forced by the noise.}, the study of the strong Feller property is tightly related to proving unique ergodicity and convergence to the unique invariant measure in total variation. For example, in parabolic stochastic PDEs, the strong Feller property is to be naturally expected when an additive noise forces every direction in Fourier space like space-time white noise. We, however, point out that the hyperbolic dynamics \eqref{SdSG} do not satisfy the strong Feller property (see Proposition \ref{PROP:fail} below) even though  the space-time white noise forces  all of Fourier modes. 
Due to the failure of strong Feller property, the method of \cite{DZ, TW, HM18} cannot be exploited here. To address this issue, we employ the approach introduced by Hairer and Mattingly \cite{HM06}, which involves the concept of asymptotic strong Feller to handle Markov semigroups that do not satisfy the strong Feller property. The asymptotic strong Feller property established in proving the unique ergodicity (in Theorem \ref{THM:1})  prescribes some kind of smoothing property at time $\infty$ in proper Wasserstein-1 distances approximating the total variation distance. This is a big difference compared to the strong Feller property which exhibits a smoothing property at a fixed time $t>0$ in the total variation distance. In our setting, because of the failure of the strong Feller property, the strong convergence in the total variation distance is not expected to hold. Therefore, instead of proving the existence of a spectral gap in the total variation distance, we show that the Markov semigroup $\Pt{t}$ has a contraction property in a particular 1-Wasserstein distance (see \eqref{STARCONTRAC} below), which implies the exponential convergence to the Gibbs measure $\rhoo$ for \eqref{SdSG}. In particular, by taking $\nu=\dl_{\u_0}$ in \eqref{EXPWASS}, one can see that the Markov transition probabilities $\Pt{t}^*\dl_{\u_0}$ converge exponentially fast to the Gibbs measure.

Based on the strategy introduced in \cite{HMS}, the geometric ergodicity follows from  suitable large-time smoothing estimates (see Lemma \ref{LEM:gra} and \ref{LEM:gra1} below) for the Markov process, related to asymptotic strong Feller property, together with a suitable irreducibility condition (see Lemma \ref{LEM:visit} below) for system \eqref{SdSG}. More precisely, with large-time smoothing estimates, we (i) establish a contracting distance  for the Markov semigroup $\{\Pt{t} \}_{t\ge 0}$ when initial states are not far from each other (see Proposition \ref{PROP:contract} below) and (ii) show that bounded sets in the support of the Gibbs measure are small in the sense of Proposition \ref{PROP:dsmall}. Once (i) and (ii) are established, exponential convergence towards the Gibbs measure \eqref{GibbsW} is obtained through a Wasserstein  distance (see \eqref{STARCONTRAC} below). The main ingredient to prove part (i) is the gradient estimate (Lemma \ref{LEM:gra}) whose proof is based on the Malliavin calculus. We point out that another important ingredient to quantify the convergence rate is to prove the existence of a Lyapunov function for \eqref{SdSG}. Note that, unlike parabolic equations, nonlinear wave equations do not have a strong dissipation and so the existence of a Lyapunov function is not a priori clear. As in extensive literature (say, \cite{CKSTT00, CKSTT2003}) about dispersive/hyperbolic equations, we carry out our energy estimates with a modified energy technique. The introduction of a modified energy term is an essential ingredient to exhibit a hidden smoothing effect for dispersive/hyperbolic equations.
Notice that, as we emphasized above, the irreducibility condition and Lyapunov structure play a crucial role in obtaining the quantified ergodicity result \eqref{EXPWASS}. In Appendix \ref{SEC:APP}, based on the method in \cite{FT}, by only exploiting the large-time smoothing estimates (see Proposition \ref{PROP:diff} below), it can be shown that the Gibbs measure is uniquely ergodic for \eqref{SdSG} without quantifying the convergence rate.

\begin{remark}\rm\label{REM:nonunique}
We point out that the unique ergodicity for the deterministic sine-Gordon equation \eqref{NLW1} does not hold. As we already explained above, the sine-Gordon equation is an integrable Hamiltonian system which has an infinite number of conserved quantities. Therefore,  at least formally, there are infinitely many invariant measures.
For example, important conserved quantities are the total momentum $M(u,\dt u)=\int \dt u \dx u dx$  as well as the Hamiltonian \eqref{Hamil1}. Hence, $e^{-M(u,\dt u)}du \otimes d(\dt u)$ is also invariant under the flow of \eqref{NLW1}.

\end{remark}

\begin{remark}\rm
Note that the damping term $\dt u$ in \eqref{SdSG} plays an essential role when considering the invariant measure for \eqref{SdSG}. Notice that the linear dynamics  
\begin{align}
\dt^2 u + \dt u + (1- \dx^2)  u  = \sqrt{2}\xi
\label{lin00}
\end{align}

\noi
can be written as a superposition
of the deterministic linear wave dynamics \eqref{NLW1} 
which preserves the massive Gaussian free field $\mu$ defined in \eqref{mass0}, and the Ornstein-Uhlenbeck dynamics of $\dt u$, namely \eqref{Lan}
which preserves the spatial white noise measure $\mu_0$ defined in \eqref{white00}.
Hence, the distribution of $(u(t), \dt u(t))$ is invariant under the linear dynamics \eqref{lin00}. Notice that when considering the problem without the damping term $\dt u$, there is no invariant measure for the linear dynamics due to the non-conservative nature of the problem. In particular, the variance of the solution $u(t)$ is time dependent in the absence of the damping term $\dt u$, unlike the case of \eqref{SdSG} which contains $\dt u$. 
See Subsection \ref{SUBSEC:Stoconvol} for the role of $\dt u$ in proving the invariance of the Gibbs measure.

\end{remark}

\begin{remark}\rm
In order to avoid a problem\footnote{Since the zeroth frequency is not
controlled due to the lack of the conservation of the $L^2$-mass under the dynamics.} at the zeroth frequency under the Gibbs measure construction, we need to work with the massive linear part $\dt^2u+(1-\dd_x^2)u$. Hence, because of this reason, we work with the massive case in this paper.
\end{remark}

\begin{remark}\rm
To the best of the author's knowledge, the exponential convergence towards the Gibbs measure for ``canonical'' stochastic quantization equations has not been addressed even if the qualitative result on ergodicity was known in \cite{TOL, TOLPRE} where the unique ergodicity of $\Phi^4_d$-measure was proved under the hyperbolic $\Phi^4_d$-model when $d=1,2$.
\end{remark}

\begin{remark}\rm
In \cite{ORSW, ORSW2}, the hyperbolic stochastic sine-Gordon model on $\T^2$ was studied.
In particular, in \cite{ORSW2} they established almost sure global well-posedness of \eqref{SdSGT2} and invariance of the (renormalized) Gibbs measure for\footnote{
The behavior of the model \eqref{SdSGT2} is also influenced by the value of $\be^2 > 0$, similar to the parabolic case. See \cite{HaS, CHaS}.} $0\le \be^2<2\pi$. Regarding the construction of the (sine-Gordon) Gibbs measure on the infinite volume $\R^2$, see also \cite{BAR}. Similar to the stochastic nonlinear wave equation with a polynomial nonlinearity, it is necessary to perform a suitable renormalization in order to give meaning to the equation on $\T^2$ as follows:
\begin{equation}
\dt^2u+ \dt u + (1-\Dl)u + \infty\cdot \sin(\be u) = \sqrt{2}\xi.
\label{SdSGT2}
\end{equation}

\noi
The requirement for renormalization becomes apparent when we consider the rapid oscillations of $\sin (\be u)$ caused by the expected roughness of $\xi$ and so $u$ on $\T^2$, leading the nonlinearity $\sin (\be u)$ to tend towards zero in some limiting sense. In order to compensate such decay and have a non-trivial solution, we should take $\g$ in \eqref{SdSG} as $ \g=\infty$. See \cite[Proposition 1.4]{ORSW} for a triviality phenomenon for the unrenormalized model. When trying to extend the quantified ergodicity result in Theorem \ref{THM:1} to \eqref{SdSGT2}, the primary challenge arises from the non-polynomial nature of the nonlinearity, rendering the analysis of the pertinent stochastic objects significantly more intricate in comparison to the one-dimensional case.
We plan to study the extension of Theorem \ref{THM:1} to the case $d=2$ in a future work.

\end{remark}

\subsection{Organization of the paper} 
In Section \ref{SEC:Invar}, we prove the invariance of the Gibbs measure under the flow of \eqref{SdSG}.
In Section \ref{SEC:exp}, we present the failure of strong Feller property (Proposition \ref{PROP:fail}) and show the exponential ergodicity in a type of 1-Wasserstein distance. 
In Appendix \ref{SEC:APP}, we present that the Gibbs measure is uniquely ergodic for \eqref{SdSG} by only exploiting the large-time smoothing estimates. 

\section{Invariance of the Gibbs measure under the flow}
\label{SEC:Invar}

\subsection{Truncated Gibbs measures and its convergence}
\label{SUBSEC:GFF}
In this subsection, we consider the Gibbs measure $\rhoo$. To achieve the main purpose, we begin by establishing specific notations. Consider $ \al \in \R$, and let $\mu_\al$ represent a Gaussian measure with the Cameron-Martin space $H^\al(\T)$, formally defined as follows:
\begin{align}
d\mu_\al= Z_\al^{-1} e^{-\frac 12 \| u\|_{{H}^{\al}}^2} du
& =  Z_\al^{-1} \prod_{n \in \Z} 
 e^{-\frac 12 \jb{n}^{2\al} |\ft u(n)|^2} d\ft u(n) , 
\label{gauss0}
\end{align}

\noi
where $\jb{\,\cdot\,} = (1+|\,\cdot\,|^2)^\frac{1}{2}$. For $\al = 1$, the Gaussian measure $\mu_1$ corresponds to the massive Gaussian free field, whereas it corresponds to the white noise measure $\mu_0$ for $\al = 0$. In particular, the massive Gaussian free field $\mu$ and white noise $\mu_0$ are Gaussian measures on $\mathcal S'(\T)$ with the following covariances
\begin{align*}
&\int \jb{f,\phi} \jb{g,\phi} d\mu(\phi)=\jb{f,(1-\Dl)^{-1}g}_{L^2(\T)},\\
&\int \jb{f,\phi} \jb{g,\phi} d\mu_0(\phi)= \langle f, g \rangle_{L^2 (\T)}
\end{align*}

\noi
for any $f,g \in \mathcal{S}(\T)$. To keep things straightforward, we set 
\begin{align}
\mu = \mu_1
\qquad \text{and}
\qquad 
\muu = \mu \otimes \mu_{0} .
\label{gauss1}
\end{align}

\noi
Define the index sets $\Ld$ and $\Ld_0$ by 
\begin{align}
\Ld = \Z_+ \qquad \text{and}\qquad \Ld_0 = \Ld \cup\{0\}
\label{index}
\end{align}

\noi
such that $\Z = \Ld \cup (-\Ld) \cup \{0\}$.
Then, let $\{ g_n \}_{n \in \Ld_0}$ and $\{ h_n \}_{n \in \Ld_0}$ be sequences of mutually independent standard complex-valued\footnote {This means that $g_0,h_0\sim\NN_\R(0,1)$
and $\Re g_n, \Im g_n, \Re h_n, \Im h_n \sim \NN_\R(0,\tfrac12)$ for $n \ne 0$.} Gaussian random variables and set $g_{-n} := \cj{g_n}$ and $h_{-n} := \cj{h_n}$ for $n \in \Ld_0$.
Furthermore, we assume that $\{ g_n \}_{n \in \Ld_0}$ and $\{ h_n \}_{n \in \Ld_0}$ are independent from the space-time white noise $\xi$ in \eqref{white}. We now define random fields $u= u^\o$ and $v = v^\o$ by the following  Gaussian Fourier series:\footnote{As a convention, we equip $\T$ with the normalized Lebesgue measure denoted as $dx_{\T} = (2\pi)^{-1} dx$.}
\begin{equation} 
u^\o(x) = \sum_{n \in \Z} \frac{ g_n(\o)}{\jb{n}} e^{inx} \qquad \text{and} \qquad
v^\o(x) = \sum_{n\in \Z}  h_n(\o) e^{inx}. 
\label{ranseries}
\end{equation}

\noi
Let $\Law(X)$ denote the probability distribution of a random variable $X$ under the underlying probability measure $\PP$. Consequently, we obtain the following:
\begin{align}
\Law (u, v) = \muu = \mu \otimes \mu_0
\end{align}

\noi
for $(u, v)$ in \eqref{ranseries}.
Note that  $\Law (u, v) = \muu$ is supported on
\begin{align}
\H^{\al}(\T): = H^{\al}(\T)\times H^{\al- 1}(\T)
\label{SOBOLEV}
\end{align}

\noi
for $\al < \frac 12$ but not for $\al \geq \frac 12$
(and more generally in $W^{\al, p}(\T) \times W^{\al-1, p}(\T)$
for any $1 \le p \le \infty$ and $\al < \frac 12$). We represent the notation $a-$ as $a-\eps$, where $\eps$ can be made arbitrarily small. Hence, we can write $\supp(\rhoo)\subset \H^{\frac 12-}(\T)$. 

Consider $\P_N$ as a smooth frequency projector onto the frequencies $\{n\in\Z :|n|\leq N\}$, defined as a Fourier multiplier operator in the following manner:
\begin{align}
\varphi_N(n) = \varphi(N^{-1}n)
\label{chi}
\end{align}

\noi
for some  fixed non-negative function  $\varphi \in C^\infty_c(\R)$ 
such that $\supp \varphi \subset \{\xi\in\R:|\xi|\leq 1\}$ and $\varphi\equiv 1$ 
on $\{\xi\in\R:|\xi|\leq \tfrac12\}$. Given $N\in \N$, we define the truncated Gibbs measure 
\begin{align}
d\rhoo_N(u,\dt u)= Z_N^{-1} e^{\frac {\g}{\be}\int_\T \cos(\be \P_N u) dx }   d\muu(u, \dt u)
\label{TruGibbs}
\end{align}

\noi
where $Z_N$ denotes the partition function. In the following subsections, we show that the truncated Gibbs measure \eqref{TruGibbs} is an invariant measure for the truncated dynamics \eqref{SdSG2} and so the invariance of the limiting Gibbs measures $\rhoo$ under the flow \eqref{SdSG} comes as a corollary.
Before moving to the next subsection, we consider the following proposition.
\begin{proposition}\label{PROP:Gibbs}
Let $\be,\g \in \R\setminus\{0\}$. Given any finite $ p \ge 1$, 
there exists $C_p > 0$ such that 
\begin{equation}
\sup_{N\in \N} Z_N=\sup_{N\in \N}\int e^{\frac {\g p}{\be}\int_\T \cos(\be\P_N u) dx }  d\muu(u, \dt u)
\leq C_p  < \infty.
\label{exp1}
\end{equation}

\noi
Moreover, we have
\begin{equation}
\lim_{N\rightarrow\infty}e^{\frac {\g}{\be }\int_\T \cos(\be \P_N  ) dx }=e^{\frac {\g}{\be}\int_\T \cos( \be \u ) dx } \qquad \text{in } L^p(\muu).
\label{exp2}
\end{equation}

\noi
As a consequence, the truncated Gibbs measure $\rhoo_N$ in \eqref{TruGibbs} converges\footnote{This convergence allows us to define the Gibbs measure $\rhoo$ as the limit in total variation of $\rhoo_N$.}, in the sense of \eqref{exp2}, to the Gibbs measure $\rhoo$ given by
\begin{align}
d\rhoo(u, \dt u)= Z^{-1} e^{\frac {\g}{\be}\int_\T \cos(\be u) dx }   d\muu(u, \dt u).
\label{Gibbslim}
\end{align}

\noi
Furthermore, 
the resulting Gibbs measure $\rhoo$ is equivalent 
to the base Gaussian field $\muu$.
\end{proposition}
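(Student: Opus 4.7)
The plan is to exploit the fact that in one dimension the sine-Gordon nonlinearity $\cos(\be u)$ is pointwise bounded by $1$, which is precisely why no renormalization is needed here. For the uniform bound \eqref{exp1}, I would use the estimate $|\cos(\be \P_N u(x))| \le 1$ valid for every $x \in \T$ and every $u$ and $N$. Since $\T$ carries normalized Lebesgue measure of total mass one, this gives $\left|\frac{\g p}{\be}\int_\T \cos(\be \P_N u)\, dx\right| \le \left|\frac{\g p}{\be}\right|$ uniformly in $u$ and $N$, and integrating the exponential against the probability measure $\muu$ yields \eqref{exp1} with $C_p = \exp(|\g p / \be|)$.

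For the convergence \eqref{exp2}, the main intermediate step is
\begin{align*}
\int_\T \cos(\be \P_N u)\, dx \longrightarrow \int_\T \cos(\be u)\, dx \quad \text{in } L^2(\muu).
\end{align*}
Using $|\cos a - \cos b| \le |a-b|$ together with $|\T| = 1$ reduces this to $\|\P_N u - u\|_{L^2(\T)} \to 0$ in $L^2(\muu)$. By Parseval applied to the representation \eqref{ranseries} and the definition \eqref{chi} of the cutoff,
\begin{align*}
\int \|\P_N u - u\|_{L^2(\T)}^2\, d\muu = \sum_{n \in \Z} \bigl(1 - \varphi_N(n)\bigr)^2 \jb{n}^{-2},
\end{align*}
which tends to zero by dominated convergence on the counting measure, since $\sum_{n \in \Z} \jb{n}^{-2} < \infty$ in one dimension and $\varphi_N(n) \to \varphi(0) = 1$ pointwise in $n$. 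Combining this $L^2(\muu)$-convergence with the uniform bound from the first step, dominated convergence upgrades the convergence of the exponentials to $L^p(\muu)$-convergence for every finite $p \ge 1$.

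Once \eqref{exp1} and \eqref{exp2} are in place, the normalizing constants satisfy $Z_N \to Z \in [e^{-|\g/\be|}, e^{|\g/\be|}]$, and a triangle-inequality argument on $F_N/Z_N - F/Z$ in $L^1(\muu)$ gives $\rhoo_N \to \rhoo$ in total variation, with $d\rhoo = Z^{-1} e^{(\g/\be)\int_\T \cos(\be u)\, dx}\, d\muu$. Since this Radon--Nikodym density is both bounded above and bounded away from zero, $\rhoo$ and $\muu$ are mutually absolutely continuous, which establishes the last assertion. The only analytic input is the $L^2(\T)$-convergence of $\P_N u$ to $u$ under $\muu$; no obstacle of substance arises here, in sharp contrast to the two-dimensional model \eqref{SdSGT2} analyzed in \cite{ORSW, ORSW2}, where a Wick-type renormalization is required to make sense of the exponential interaction.
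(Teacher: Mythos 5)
Your proposal is correct and follows essentially the same route the paper takes: the paper notes that the uniform exponential integrability \eqref{exp1} is immediate from the pointwise bound $|\cos(\be\P_N u)|\le 1$ and then invokes the standard argument of \cite[Remark 3.8]{TZVET} for the convergence \eqref{exp2}, which is precisely the Lipschitz-plus-Parseval-plus-bounded-convergence argument you wrote out in detail. No gaps.
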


Proposition \ref{PROP:Gibbs} allows us to define the Gibbs measure 
\begin{align*}
d\rhoo(u,v)= Z^{-1} e^{\frac {\g}{\be}\int_\T \cos(\be u) dx }   d\muu(u, v)
\end{align*}

\noi
as a limit of the truncated Gibbs measure 
\begin{align*}
d\rhoo_N(u,v)= Z_N^{-1} e^{\frac {\g}{\be}\int_\T \cos(\be \P_N u) dx }   d\muu(u, v).
\end{align*}

\noi
Once the uniform exponential integrability \eqref{exp1} is shown, which can be easily checked, then the desired convergence \eqref{exp2} of the density follows from a standard argument. See \cite[Remark 3.8]{TZVET}.

\subsection{Stochastic convolution and its invariant measure}
\label{SUBSEC:Stoconvol}

Denote $\Psi$ as the stochastic convolution, which satisfies the linear stochastic damped wave equation as follows:
\begin{align}
\begin{cases}
\dt^2 \Psi + \dt\Psi +(1-\dx^2)\Psi  = \sqrt{2}\xi\\
(\Psi,\dt\Psi)|_{t=0}=\u_0=(u_0,v_0),
\label{SDLW}
\end{cases}
\end{align}

\noi
where  $(u_0, v_0) = (u_0^\o, v_0^\o)$ 
is a pair of  the Gaussian random distributions with 
$\Law (u_0^\o, v_0^\o) = \muu = \mu \otimes \mu_0$ in \eqref{gauss1}.
We set the linear damped wave propagator $\D(t)$ by 
\begin{equation} 
\D(t) = e^{-\frac t2 }\frac{\sin\Big(t\sqrt{\tfrac34-\dx^2}\Big)}{\sqrt{\tfrac34-\dx^2}}
\label{D1}
\end{equation}

\noi
viewed as a Fourier multiplier operator. Then, the stochastic convolution $\Psi$ can be expressed as 
\begin{equation}
\Psi (t) =  S(t)(u_0, v_0) + \sqrt 2 \int_0^t\D(t-t')dW(t'), 
\label{W2}
\end{equation}

\noi
where  $S(t)$ is defined by 
\begin{align}
S(t) (f, g) = \dt\D(t)f +  \D(t) (f + g)
\label{St0}
\end{align}

\noi
and 
$W$ represents  a cylindrical Wiener process on $L^2(\T)$: 
\begin{align}
W(t) =  \sum_{n \in \Z } B_n (t) e^{inx},
\label{CW}
\end{align}

\noi
and  $\{ B_n \}_{n \in \Z}$ is defined by $B_n(0) = 0$ and $B_n(t) = \jb{\xi, \ind_{[0, t]} \cdot e_n}_{ t, x}$. In this context, the notation $\jb{\cdot, \cdot}_{t, x}$ represents the duality pairing on $\R \times \T$ and $e_n(z):=e^{in\cdot z}$. Consequently, the family $\{ B_n \}_{n \in \Z}$ consists of mutually independent complex-valued Brownian motions\footnote {In particular, $B_0$ is a standard real-valued Brownian motion.} conditioned such that $B_{-n} = \cj{B_n}$ for all $n \in \Z$. Specifically, we have normalized $B_n$ such that $\text{Var}(B_n(t)) = t$. It is evident that $\Psi$ lies in $C (\R_+;W^{\frac{1}{2}-\eps, \infty}(\T))$ almost surely for any $\eps > 0$. Below, we take the symbol $\eps > 0$ to represent a sufficiently small positive constant, capable of being close to zero. 

In the subsequent discussions, we will adhere to Hairer's convention, representing stochastic terms using trees. The vertex denoted as ``\,$\<X>$\,''  corresponds to the space-time white noise $\xi$, while the edge represents the Duhamel integral operator $\I$
given by 
\begin{align}
\I (F) (t) = \int_0^t \D(t - t') F(t') dt'
= \int_0^t e^{-\frac {t-t'}2 }\frac{\sin\Big((t-t')\sqrt{\tfrac34-\dx^2}\Big)}{\sqrt{\tfrac34-\dx^2}}
F(t') dt'.
\label{lin1}
\end{align}

\noi
We set  
\begin{align}
\Psi(t)=S(t)\u_0+\stick_{t}(\xi),
\label{W2a}
\end{align}

\noi
where $\Psi$ is as in \eqref{W2}, with the understanding that $\stick_t(\xi)$ in \eqref{W2a} denotes the second term in \eqref{W2}. As mentioned above, $\stick_t(\xi)$ has (spatial) regularity\footnote{Our focus is only on examining the spatial regularities of stochastic objects.} $\frac 12 -$. For future use, we note that for any $\al \in \R$ the linear propagator $\vec S(t)$ satisfies
\begin{align}
\|\vec S(t)\u_0 \|_{\H^\al} \les e^{-\frac t2}\|\u_0 \|_{\H^\al}.
\label{linest}
\end{align}

\noi
where $\vec S(t)\u_0=(S(t) \u_0, \dt S(t) \u_0)$ and $\u_0=(u_0,v_0)$. 

We first prove the invariance of $(\P_N)_\# \muu$ for the solution $\P_N\Psi$ to truncated linear stochastic damped wave equations 
\begin{align}
d\begin{pmatrix}
u_N\\v_N
\end{pmatrix} = -\begin{pmatrix}
0& -1\\ 1-\dx^2&0
\end{pmatrix}\begin{pmatrix}
u_N\\v_N
\end{pmatrix}dt + \begin{pmatrix}
0\\- v_Ndt+\sqrt{2}\P_NdW
\end{pmatrix}
\label{TSDLW}
\end{align} 

\noi
with data given by $(u_N,v_N)\big|_{t=0}=\P_N(u_0,u_1)$ whose law is $(\P_N)_{\#}\muu$, where $u_N=\P_N u$ and $v_N=\P_N v$. 

\begin{proposition}\label{PROP:InvarGauss}
The truncated Gaussian field $(\P_N)_\# \muu$ 
\begin{align*}
d(\P_N)_\#\muu=Z^{-1}e^{-\frac 12 \int (\dx u_N)^2+ u_N^2 dx}du_N  \otimes e^{-\frac 12 \int v_N^2 dx}dv_N
\end{align*}

\noi
is invariant under the flow \eqref{TSDLW}.

\end{proposition}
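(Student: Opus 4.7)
The plan is to view the truncated equation \eqref{TSDLW} as a linear SDE on the finite-dimensional subspace $\P_N L^2(\T) \times \P_N L^2(\T)$ and deploy Bourgain's invariant measure argument in the stochastic setting. Since $\P_N$ commutes with $\dx^2$ and both the initial data and the noise are $\P_N$-truncated, the solution $(u_N(t),v_N(t))$ stays in the $\P_N$-subspace. Passing to Fourier coefficients, the system decouples across frequencies $|n|\le N$ into the two-dimensional Langevin-type SDEs
\begin{align*}
d\ft u_n = \ft v_n\, dt, \qquad
d\ft v_n = -(1+n^2)\,\ft u_n\, dt - \ft v_n\, dt + \sqrt{2}\, d\ft B_n,
\end{align*}
subject to the reality condition $\ft u_{-n}=\cj{\ft u_n}$, $\ft v_{-n}=\cj{\ft v_n}$.

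The heart of the argument is a Lie--Trotter-type splitting of \eqref{TSDLW} into (i) the deterministic Hamiltonian wave flow $\dt u_N = v_N$, $\dt v_N = -(1-\dx^2)u_N$, and (ii) the Ornstein--Uhlenbeck evolution $du_N=0$, $dv_N = -v_N\,dt + \sqrt{2}\,\P_N dW$. Flow (i) is Hamiltonian with Hamiltonian $H_N(u_N,v_N) = \tfrac12\|u_N\|_{H^1}^2+\tfrac12\|v_N\|_{L^2}^2$; being symplectic on a finite-dimensional phase space, it preserves Liouville measure and conserves $H_N$, hence leaves invariant the Gaussian density proportional to $\exp(-H_N)$ on $\P_N L^2 \times \P_N L^2$, and this density is precisely $(\P_N)_\#\muu$. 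Flow (ii) is the classical finite-dimensional Ornstein--Uhlenbeck process on $\P_N L^2$ whose unique stationary law is the Gaussian white noise $\mu_0$ restricted to $\P_N L^2$, while the $u_N$-marginal is untouched. Each split step therefore preserves $(\P_N)_\#\muu$. Alternating the two flows on time intervals of length $\delta>0$ and sending $\delta\to 0$, a standard Trotter-type limit (routine here because the dynamics are linear, finite-dimensional, and Gaussian with uniform moment bounds) shows convergence of the iterated composition to the solution of \eqref{TSDLW} on any bounded time interval; invariance of $(\P_N)_\#\muu$ then passes to the limit.

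The step requiring the most care is the Trotter-limit justification, although in the present setting it is essentially routine. A conceptually cleaner alternative that avoids it entirely is to compute the generator $\L$ of \eqref{TSDLW} and verify directly that $\int \L F\,d(\P_N)_\#\muu = 0$ for every $F\in C_c^\infty(\P_N L^2 \times \P_N L^2)$ via a single integration-by-parts against $e^{-H_N}$: the symplectic drift $v_N\cdot \nabla_{u_N}-(1-\dx^2)u_N\cdot \nabla_{v_N}$ is skew-adjoint in $L^2((\P_N)_\#\muu)$, and $-v_N\cdot \nabla_{v_N}+\P_N \Delta_{v_N}$ is the Ornstein--Uhlenbeck generator with stationary measure $\mu_0$ on $\P_N L^2$, so both contributions vanish in the integral.
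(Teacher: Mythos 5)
Your proposal is correct, and your closing ``alternative'' is in fact exactly the paper's proof: the paper writes the generator of \eqref{TSDLW} as $\L_N=\L_N^1+\L_N^2$ in the Fourier coordinates $a_n=\ft u(n)$, $b_n=\ft v(n)$, checks that the Hamiltonian part $\L_N^1$ annihilates $e^{-H_N}\prod da_n\,db_n$ via conservation of $H_N$ plus Liouville's theorem, and that the Ornstein--Uhlenbeck part $\L_N^2$ preserves the white-noise marginal (by computing the variance of the explicit OU solution), concluding $\L_N^\#\muu_N=0$. Your \emph{primary} route is genuinely different in form: you split at the level of the flows rather than the generators, argue invariance under each split flow separately, and then invoke a Lie--Trotter limit to transfer invariance to \eqref{TSDLW}. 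Both are sound. The generator decomposition buys you a proof with no limiting procedure at all --- one algebraic identity on a finite-dimensional space --- which is why the paper (and your alternative) is the cleaner writeup. The Trotter route is conceptually appealing because it literally realizes the heuristic ``wave flow $+$ Langevin flow,'' but the convergence of the split-step scheme, while indeed routine for a linear finite-dimensional Gaussian SDE (track the mean ODE and the covariance Lyapunov equation under the splitting), is an extra step you would have to write out; asserting it as ``routine'' is acceptable here but is the one place where your main argument is not self-contained. Since you supply the generator argument as a fallback, there is no gap.
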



\begin{proof}
Establishing the invariance of $\muu_N=(\P_N)_{\#}\muu$ is the same as demonstrating that $\L_N^\#\muu_N=0$, where $\L_N$ represents the infinitesimal generator of \eqref{TSDLW}, and $\L_N^\#$ denotes its dual operation on probability measures on $E_N\times E_N$ as follows:
\begin{equation*}
\int_{E_N\times E_N}F(u,v)d(\L_N^\#\muu_N) = \int_{E_N\times E_N}(\L_NF)(u,v)d\muu_N(u,v)
\end{equation*}

\noi
for every $F\in C^{\infty}_b(E_N\times E_N)$, where $E_N=\text{span}\{e^{inx}: |n| \le N \}$. Considering \eqref{TSDLW}, we can express $\L_N$ as the sum of $\L_N^1$ and $\L_N^2$  (i.e.~$\L_N = \L_N^1+\L_N^2$), where $\L_N^1$ corresponds to the generator for linear wave equations, and $\L_N^2$ represents the generator for an Ornstein-Uhlenbeck process. Specifically, \eqref{TSDLW} can be viewed as a system of  stochastic differential equations in terms of the coordinates\footnote{For simplicity, we assume that $a_n$ and $b_n$ are real numbers. In general, the equation can be written with $\Re a_n, \Re b_n, \Im a_n$ and $\Im b_n$.  } $a_n=\ft u(n)$ and $b_n=\ft v(n)$, given by
\begin{align}
\begin{cases}
d a_n = b_n dt\\
d b_n = -\jb{n}^2a_ndt +(-\ b_ndt+\sqrt{2}dB_n)
\end{cases},|n|\le N
\label{SPUS0}
\end{align}

\noi
The infinitesimal generator for \eqref{SPUS0} is given by\footnote{See \cite[Section 3]{ORTZVET}}
\begin{align*}
\L_Nf(a_0,...,a_{N},b_0,...,b_{N}) = \sum_{n=0}^{N}b_n\partial_{a_n}f-\jb{n}^2a_n\partial_{b_n}f - b_n\partial_{b_n}f+\partial_{b_n}^2f.
\end{align*}

\noi
We now denote
\begin{align*}
\L_N^2f := \sum_{n=0}^{N}-b_n\partial_{b_n}f+\partial_{b_n}^2f.
\end{align*}

\noi
Subsequently, we can identify $\L_N^2$ as the generator of the following Ornstein-Uhlenbeck process
\begin{align*}
b_n(t)=e^{-t}b_n(0)+\sqrt{2}\int_0^te^{-(t-t')}dB_n(t'), \quad |n|\le N
\end{align*}

\noi
and a simple computation exploiting It\^o's isometry shows that $b_n$ is a centered (i.e.~mean $0$) Gaussian random variable with variance
\begin{align*}
\E(b_n(t)^2)=e^{-2t}\E(b_n(0)^2)+1-e^{-2 t}=1.
\end{align*}

\noi
In terms of \eqref{ranseries}, $\E(b_n(t)^2)= \E(b_n(0)^2) $=1 for every $t \ge 0$, which means that $\L_N^2$ preserves $(\P_N)_{\#}\mu_0$ (and so $(\P_N)_{\#}\muu$\footnote{The measure $(\P_N)_{\#}\muu$ decouples into the massive Gaussian free field $(\P_N)_{\#}\mu$ on $u$ and the white noise measure $(\P_N)_{\#}\mu_0$  on $v$. In particular, $\L_N^2$ only acts on the component $v$.}). On the contrary, we find that the generator of the truncated linear wave equations represented in terms of the Hamiltonian system of ODEs 
\begin{align*}
\begin{cases}
\frac{d}{dt}a_n = b_n,\\
\frac{d}{dt}b_n = -\jb{n}^2a_n,
\end{cases}~,|n|\le N
\end{align*}

\noi
is given by
\begin{align*}
\L_N^1 = \sum_{n=0}^{N}b_n\partial_{a_n}-\jb{n}^2a_n\partial_{b_n}.
\end{align*}

\noi
The Hamiltonian, which represents the energy  
\begin{align*}
H_N(a_0,...,a_{N},b_0,...,b_{N}) &=\frac12\sum_{n=0}^{N}\big(\jb{n}^2|a_n|^2+|b_n|^2\big)
\end{align*}

\noi
remains conserved under the linear flow, and thanks to Liouville's theorem, this flow preserves the Lebesgue measure $\prod_{n=0}^{N}da_ndb_n$. Thus, we observe that the measure  
\begin{align*}
e^{-\frac 12 \int (\dx u_N)^2+ u_N^2 dx}du_N \otimes e^{-\frac 12 \int v_N^2 dx}dv_N =e^{-H_{N}(a_0,...,a_{N},b_0,...,b_{N})}\prod_{n=0}^{N}da_ndb_n
\end{align*}

\noi
is also preserved under the Hamiltonian flow, which implies $(\L_N^1)^\# \muu_N=0$. By combining all results, we obtain  $\L_N^\#\muu_N=(\L_N^1+\L_N^2)^\#\muu_N=0$ which concludes the proof of the invariance under the truncated flow \eqref{TSDLW}.

\end{proof}

\begin{corollary}\label{COR:InvarGauss}
The Gaussian field $\muu = \mu \otimes \mu_0$ is invariant under the flow of linear stochastic wave equation \eqref{SDLW}.
\end{corollary}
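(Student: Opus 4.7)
The strategy is to deduce the invariance of $\muu$ under \eqref{SDLW} by passing to the limit $N\to\infty$ in the truncated invariance of Proposition \ref{PROP:InvarGauss}. Since $\P_N$ is a Fourier multiplier it commutes with the linear propagator $\D(t)$ of \eqref{D1} and with the semigroup $\vec S(t)$ of \eqref{St0}. Consequently, if $\Psi$ denotes the solution of \eqref{SDLW} as in \eqref{W2} with initial data $\u_0$ of law $\muu$ taken independent of $\xi$, then $(\P_N\Psi,\P_N\dt\Psi)$ solves precisely the truncated system \eqref{TSDLW} with initial datum $\P_N\u_0 \sim (\P_N)_\#\muu$ and driving noise $\sqrt{2}\,\P_N\, dW$. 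Proposition \ref{PROP:InvarGauss} therefore yields
\begin{equation*}
\Law\bigl(\P_N\Psi(t),\,\P_N\dt\Psi(t)\bigr) \;=\; (\P_N)_\#\muu \qquad \text{for every } t\ge 0,\ N\in\N.
\end{equation*}

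Next I would verify that for every fixed $t\ge 0$, the pair $(\P_N\Psi(t),\P_N\dt\Psi(t))$ converges almost surely to $(\Psi(t),\dt\Psi(t))$ in $\H^{\frac 12-\eps}(\T)$. Using the Duhamel representation \eqref{W2}--\eqref{W2a}, the linear part $\vec S(t)\u_0$ lies in $\H^{\frac 12-\eps}$ $\muu$-a.s.~by \eqref{linest} together with $\supp(\muu)\subset\H^{\frac 12-\eps}$, while $\stick_t(\xi)\in\H^{\frac 12-\eps}$ almost surely by a direct variance computation on its Fourier coefficients. Since any element of $\H^{\frac 12-\eps}$ is approximated by its smooth frequency projections $\P_N$, the desired pointwise convergence follows. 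The same argument applied to $\u_0$ alone shows $\P_N\u \to \u$ in $\H^{\frac 12-\eps}$ for $\muu$-a.e.~$\u$.

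Now fix an arbitrary $F\in C_b(\H^{\frac 12-\eps}(\T))$. Two applications of the dominated convergence theorem yield
\begin{equation*}
\E\bigl[F(\Psi(t),\dt\Psi(t))\bigr] \;=\; \lim_{N\to\infty}\E\bigl[F(\P_N\Psi(t),\P_N\dt\Psi(t))\bigr] \;=\; \lim_{N\to\infty}\!\int F(\P_N\u)\,d\muu(\u) \;=\; \int F(\u)\,d\muu(\u),
\end{equation*}
the middle equality being exactly the truncated invariance displayed above. Since this holds for every $F\in C_b(\H^{\frac 12-\eps})$, we conclude $\Law(\Psi(t),\dt\Psi(t))=\muu$ for every $t\ge 0$, which is the claimed invariance.

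The only step requiring genuine care — and the one I expect to be the principal (albeit mild) obstacle — is the identification of $(\P_N\Psi,\P_N\dt\Psi)$ as a \emph{bona fide} solution of \eqref{TSDLW}: this amounts to checking that applying $\P_N$ to the stochastic convolution $\stick_t(\xi)$ in \eqref{W2a} reproduces exactly the Duhamel integral driven by $\sqrt{2}\,\P_N dW$ that appears in \eqref{TSDLW}, which itself reduces to the commutation $\P_N\,\D(t)=\D(t)\,\P_N$. Once this bridge between the full and truncated dynamics is in place, the rest is the routine weak-convergence scheme outlined above.
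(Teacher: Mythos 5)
Your proposal is correct and follows essentially the same route as the paper: invoke the truncated invariance of Proposition \ref{PROP:InvarGauss} for $(\P_N\Psi,\P_N\dt\Psi)$, use the almost sure convergence of the projections in $\H^{\frac 12-}$ to pass to the limit in law, and identify the limit of $(\P_N)_\#\muu$ with $\muu$. Your explicit remark that one must check $(\P_N\Psi,\P_N\dt\Psi)$ genuinely solves \eqref{TSDLW} (via $\P_N\D(t)=\D(t)\P_N$) is a point the paper leaves implicit, but it is the same argument.
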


\begin{proof}

By the almost sure convergence of $(\P_N\Psi,\P_N\dt\Psi)(t)$ towards $(\Psi,\dt\Psi)(t)$ in $\H^{\frac 12-}(\T)$ for any fixed $t \ge0$, we have the following weak convergence 
\begin{align}
\Law_{\PP}\big((\P_N\Psi,\P_N\dt\Psi)(t)\big) \to \Law_{\PP}\big( (\Psi,\dt\Psi)(t)\big). 
\label{PUS1}
\end{align}

\noi
Thanks to the invariance of $(\P_N)_\#\muu$ for $(\P_N\Psi,\P_N\dt\Psi)$ (Proposition \ref{PROP:InvarGauss}), we have 
\begin{align}
(\P_N)_\#\muu=\Law_{\PP}\big( (\P_N \Psi,\dt \P_N\Psi)(0)\big)=\Law_{\PP}\big( (\P_N\Psi,\dt \P_N\Psi)(t)\big)
\label{PUS2}
\end{align}

\noi
for every $t \ge 0$.  By combining \eqref{PUS1}, \eqref{PUS2}, and weak convergence of $(\P_N)_\#\muu$ towards $\muu$ which follows from the convergence  of the series  \eqref{ranseries} in $L^p(\O;\H^{\frac 12-}(\T))$ for any $p\ge 1$, we obtain
\begin{align*}
\muu=\Law\big( (\Psi,\dt\Psi)(t)\big)
\end{align*}

\noi
for every $t\ge 0$. This completes the proof.
\end{proof}

\subsection{On the truncated dynamics}
In this subsection, we prove the invariance of the Gibbs measure $\rhoo$ \eqref{Gibbslim} under the flow of \eqref{SdSG}. We first introduce the following truncated dynamics
\begin{align}
\begin{cases}
\dt^2 u_N + \dt u_N + (1- \dx^2)  u_N   + \g\P_N\big\{ \sin(\be\P_N u_N) \big\} = \sqrt{2}\xi\\
(u, \dt u) |_{t = 0} =\u_0= (u_0, v_0).
\end{cases}
\label{SdSG2}
\end{align}

\noi
When $N=\infty$, it is understood as $\P_N=\Id$ and so \eqref{SdSG2} becomes \eqref{SdSG}. In Proposition \ref{PROP:Finvar} we show that the truncated Gibbs measure 
\begin{align}
d\rhoo_N(\u_0)=Z_{N}^{-1}e^{\frac{\g}{\be} \int_\T \cos(\be \P_N u) dx } d\muu(\u_0)
\label{TGibbs}
\end{align} 

\noi
is a corresponding invariant measure under the flow of \eqref{SdSG2}.
Then, the invariance of the measure $\rhoo$ comes as a corollary (see Corollary \ref{COR:InvarGauss}) by combining the result for the truncated dynamics and the convergence of $\rhoo_N$ to $\rhoo$.

Consider $u_N$ as the solution to \eqref{SdSG2}. Then, by writing $u_N$ as 
\begin{align}
u_N = v_N + \Psi
= (v_N +  \P_N\Psi) + \P_N^\perp \Psi,
\label{decomp2}
\end{align}

\noi
where $\P_N^\perp = \Id - \P_N$, we see that the dynamics of the truncated SdNLW \eqref{SdSG2} decouples into the linear dynamics for the high frequency part given by $\P_N^\perp \Psi$ 
\begin{align}
\dt^2 \P_N^\perp\Psi + \dt  \P_N^\perp\Psi +(1-\dx^2) \P_N^\perp\Psi  = \sqrt{2} \P_N^\perp\xi
\end{align}

\noi
and the nonlinear dynamics for the low frequency part $\P_N u_N$:
\begin{align}
\dt^2 \P_N u_N   + \dt \P_N u_N  +(1-\dx^2)  \P_N u_N 
+\g \P_N\big\{ \sin(\be \P_N u_N) \big\}= \sqrt{2} \P_N \xi 
\label{LowSdSG}
\end{align}

\noi
Also, the residual part $v_N = u_N - \Psi $ satisfies the following equation:
\begin{align}
\begin{cases}
\dt^2 v_N + \dt v_N +(1-\dx^2)v_N  +\g\P_N\Big\{ \sin\big(\be \P_N (v_N+\Psi) \big) \Big\}
=0\\
(v_N,\dt v_N)|_{t = 0}=(0,0). 
\end{cases}
\label{SNLW12}
\end{align}

\noi
Notice that we have imposed a structure of the solution under the form\footnote{The classical Bourgain \cite{BO96}, Da Prato-Debussche \cite{DP1, DP2} type argument like $u_N$=stochastic linear term+smoother term.} of
\begin{align}
\Phi_t^N(\u_0,\xi):&=u_N =S(t)\u_0 +\stick_{t}(\xi)+v_N, \label{DaPra}\\
\vec \Phi_t^N(\u_0,\xi):&=\big(   \Phi_t^N(\u_0,\xi),  \dt  \Phi_t^N(\u_0,\xi)   \big) \label{DaDD}
\end{align}

\noi
where $v_N$ also solves the following mild (Duhamel) formulation  
\begin{align}
v_{N}(t) = - \int_0^t \D(t-t')\g\P_{N}\Big\{ \sin\Big(\be\P_N \big(S(t')\u_0+\stick_{t'}(\xi)+v_{N}(t') \big)  \Big) \Big\} dt'.
\label{Pert}
\end{align}

\noi
Here, $v_N$ is expected to be smoother and hence falling into the scope of pathwise analysis (i.e.~pathwise well-posedness theory) once we have a control on the relevant stochastic terms. Consequently, our goal is to solve the perturbed equation \eqref{Pert} for $v_N$. In the realm of stochastic PDEs, a well-posedness approach based on the decomposition \eqref{DaPra} is commonly known as the Da Prato-Debussche trick \cite{DP1, DP2}. In the following we  obtain the following limiting equation in terms of $v$ 
\begin{align}
\begin{cases}
\dt^2 v + \dt v +(1-\dx^2)v  +\g\Big\{ \sin\big(\be  (v+S(t)\u_0+\stick_{t}(\xi)) \big) \Big\}
=0\\
(v,\dt v)|_{t = 0}=(0,0)
\label{SNLW13}
\end{cases}
\end{align}

\noi
and so define a solution $\Phi_t(\cdot,\xi)$ for the original equation \eqref{SdSG} in terms of the first-order expansion
\begin{align}
\Phi_t(\u_0,\xi):&=S(t)\u_0 +\stick_{t}(\xi)+v(t) \label{Dapra1},\\
\vec \Phi_t(\u_0,\xi):&=\big(   \Phi_t(\u_0,\xi),  \dt  \Phi_t(\u_0,\xi)   \big). \label{DaDD2}
\end{align}

In the upcoming content, $\g$ and $\be$ in \eqref{SdSG2} do not play any role  and thus we disregard them by setting $\g=1$ and $\be=1.$ We are now ready to prove the following propositions.

\begin{proposition}\label{PROP:GWP}
Let $N\in \N\cup\{\infty\}$\footnote{When $N=\infty$, it is understood as the limiting equation \eqref{SNLW13}.}. Then, the solution $v_N$ to \eqref{SNLW12} or \eqref{SNLW13} exists globally in time, almost surely, such that $\v_N=(v_N,\dt v_N) \in C(\R_{+}; \H^{1}(\T) )$.  
\end{proposition}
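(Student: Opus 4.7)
The plan is a standard Duhamel/Picard argument that exploits the global Lipschitz and boundedness properties of $\sin$, together with the one-derivative smoothing gained by the damped linear propagator $\D$.

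From the explicit Fourier representation \eqref{D1} one has the exponentially decaying smoothing bound
\[
\|\D(t) f\|_{H^1(\T)} + \|\partial_t \D(t) f\|_{L^2(\T)} \lesssim e^{-t/2}\,\|f\|_{L^2(\T)},
\]
and the linear propagator $\vec S$ satisfies \eqref{linest}. In view of the almost sure membership of $\stick_t(\xi)$ in $C(\R_+; L^\infty(\T))$ recalled before the statement, I would set up the Picard map
\[
\Gamma_N(v)(t) = -\int_0^t \D(t-t')\,\P_N\Big\{\sin\big(\be\P_N\big(S(t')\u_0+\stick_{t'}(\xi)+v(t')\big)\big)\Big\}\, dt'
\]
(reading $\P_\infty = \Id$ in the limiting case) on the space $X_T$ of continuous $\H^1(\T)$-valued curves $\v = (v, \dt v)$ on $[0, T]$. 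Using $|\sin(a)-\sin(b)| \le |a-b|$ together with the $L^2$-boundedness of $\P_N$, the linear estimate above yields
\[
\|\Gamma_N(v_1) - \Gamma_N(v_2)\|_{X_T} \lesssim T \, \|v_1 - v_2\|_{X_T},
\]
so that $\Gamma_N$ is a strict contraction on $X_T$ for some universal $T = T_* > 0$, independent of $N$, $\u_0$, and of the sample path of $\xi$. The contraction mapping principle then produces a unique local solution $\v_N \in C([0, T_*]; \H^1(\T))$.

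Global existence follows at once from the pointwise bound $|\sin| \le 1$, which via Duhamel gives
\[
\|\v_N(t)\|_{\H^1(\T)} \lesssim \int_0^t e^{-(t-t')/2}\, dt' \lesssim 1
\]
uniformly in $t \ge 0$ and in $N$; the local theory therefore restarts on successive intervals of length $T_*$ indefinitely, producing $\v_N \in C(\R_+; \H^1(\T))$ for every $N \in \N \cup \{\infty\}$, almost surely. There is essentially no genuine obstacle; the only point requiring brief care is the interpretation of $\sin(\be(\,\cdot\,))$ for $N = \infty$, where the argument of the sine is a priori only a low-regularity distribution. This is resolved by observing that both $\stick_t(\xi)$ and $S(t)\u_0$ lie in $L^\infty(\T)$ almost surely (the Gaussian field $\muu$ is supported in $W^{\frac 12-, \infty}(\T)$, and $\vec S(t)$ preserves this regularity class by \eqref{linest}), so that $v(t) + S(t)\u_0 + \stick_{t}(\xi) \in L^\infty(\T)$ and $\sin$ of it is a bounded pointwise-defined function lying in $L^2(\T)$, which is exactly what the Picard argument requires.
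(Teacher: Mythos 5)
Your proposal is correct and follows essentially the same route as the paper's (much terser) proof: a contraction argument in $C([0,T];\H^1(\T))$ exploiting the global Lipschitz bound on $\sin$ and the one-derivative smoothing with exponential decay from \eqref{D1}, followed by the a priori bound $\|\v_N(t)\|_{\H^1}\lesssim \int_0^t e^{-(t-t')/2}dt'\lesssim 1$ coming from $|\sin|\le 1$, which yields global existence. The only remark is that for generic data $\u_0\in\H^{\frac12-}(\T)$ one does not need $L^\infty$ membership of $S(t)\u_0$ to make sense of the composition with $\sin$; it suffices that $H^{\frac12-}(\T)$ consists of a.e.-defined functions, so $\sin$ of the argument is pointwise defined and bounded by $1$.
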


\begin{proof}
By studying the integral formulation \eqref{Pert}, a contraction argument in $C([0,T]; \H^1(\T))$ for small $T>0$ yields that the equations \eqref{SNLW12} and \eqref{SNLW13} have a unique solution up to time $T$. In taking $\H^1$-norm to \eqref{Pert}, thanks to the exponential term in \eqref{D1}\footnote{This comes from the damping term $\dt u$ in the equation \eqref{SdSG}}, we obtain an a priori bound on the $\H^1$-norm of a solution $\v_N$,  almost surely,
\begin{align*}
\sup_{t\in [0,\infty) }\|\v_N(t)\|_{\H^1}\le C,
\end{align*}

\noi
which implies that $\v_N(t)$ is almost surely defined on the half-line $[0,\infty)$ 
for any fixed $N\in \N\cup\{\infty\}$. This completes the proof of Proposition \ref{PROP:GWP}.

\end{proof}

Let $F:\H^{\frac 12-}(\T)\to \R$ be a bounded and measurable function. Given $N\in \N \cup \{\infty\} $, we define the family of bounded linear operators $\Pt{t}^N$ by
\begin{align}
F\mapsto \Pt{t}^NF(\u_0):&=\E\big[F(\vec \Phi_t^N(\u_0,\xi))  \big]
\label{MAR1}
\end{align}

\noi
where $\vec \Phi_t^N$ and $\vec \Phi_t$ are in \eqref{DaDD} and \eqref{DaDD2} and $\u_0=(u_0,v_0)\in \H^{\frac 12-}(\T)$. When $N =\infty $, it is understood as the (stochastic) flow map $\vec \Phi_t(\cdot,\xi)$ of the original equation \eqref{SdSG} and 
is denoted by
\begin{align}
F\mapsto \Pt{t}F(\u_0):&=\E\big[F(\vec \Phi_t(\u_0,\xi))  \big].
\label{MAR2}
\end{align} 

\noi
In particular, $\{\vec \Phi_t^N\}_{t\ge 0}$ is a Markov process and $\{\Pt{t}^N \}_{t\ge 0}$ is a Markov semigroup for every $N\in \N \cup \{\infty\}$.  We, however, point out that because of the actual structure of the solutions to \eqref{DaPra} or \eqref{Dapra1}, namely, first order expansion, it is not a priori clear if the operators in \eqref{MAR1} and \eqref{MAR2} define Markov semigroups on bounded functions $F:\H^{\frac 12-}(\T)\to \R$. By following the method presented in \cite[Section 4.1]{TW}, one can easily check that they satisfy the properties. 

We now turn to the proof of invariance of truncated Gibbs measure $\rhoo_N$ under the flow of \eqref{SdSG2}.  Given $N \in \N$, we first define the marginal probabilities measures $\muu_{N}$ and $\muu_{N}^\perp$ on $\P_N \H^{\frac 12-}(\T)$ and $\P_N^\perp \H^{\frac 12 -}(\T)$, respectively, as the induced probability measures under the following maps:
\begin{equation*}
\o \in \O \longmapsto (\P_N u(\o), \P_N v(\o))
\end{equation*}

\noi
for 
$\muu_{N}$
and
\begin{equation*}
\o \in \O \longmapsto (\P_N^\perp u(\o), \P_N^\perp v(\o))
\end{equation*}

\noi
for $\muu_{N}^\perp$, where $u$ and $v$ are as in \eqref{ranseries}.
Then, $\muu$ can be written as
\begin{align}
\muu = \muu_{N} \otimes \muu_{N}^\perp.
\label{X7}
\end{align}

\noi
Thanks to \eqref{TGibbs} and \eqref{X7}, we can write
\begin{align}
\rhoo_N =  \vec \nu_{N} \otimes \muu_{N}^\perp
\label{Pa07}
\end{align}

\noi
where $\vec \nu_N$ is as follows:
\begin{align*}
d \vec \nu_N(u,v)=Z_{N}^{-1}e^{\frac {\g}{\be }\int_\T \cos(\be \P_N u) dx }d\muu_{N}(u,v).
\end{align*}

\noi
We are now ready to prove the following proposition, based on the method \cite{GKOT22, ORTZVET}.

\begin{proposition}\label{PROP:Finvar}
Let $N\in \N$. The truncated Gibbs measure $\rhoo_N$ in \eqref{TGibbs} is invariant for the Markov process $\vec \Phi_t^N(\cdot,\xi)$ associated with the flow of \eqref{SdSG2}.
\end{proposition}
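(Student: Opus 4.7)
The plan is to exploit the decoupling of \eqref{SdSG2} into low- and high-frequency parts, together with a finite-dimensional generator argument in the spirit of Proposition \ref{PROP:InvarGauss}. Write $u_N = \P_N u_N + \P_N^\perp u_N$ and note that, because the nonlinearity $\g \P_N\{\sin(\be \P_N u_N)\}$ is supported on frequencies $|n|\le N$, the high-frequency component $\P_N^\perp u_N$ satisfies the linear stochastic damped wave equation with forcing $\sqrt{2}\,\P_N^\perp \xi$ while the low-frequency component $\P_N u_N$ satisfies the finite-dimensional SDE
\begin{align*}
\dt^2 \P_N u_N + \dt \P_N u_N + (1-\dx^2)\P_N u_N + \g\P_N\{\sin(\be \P_N u_N)\} = \sqrt{2}\,\P_N \xi.
\end{align*}
Since these two systems are driven by independent Wiener processes $\P_N W$ and $\P_N^\perp W$, they evolve independently. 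Hence it suffices, in view of the product decomposition \eqref{Pa07}, to prove that $\muu_N^\perp$ is invariant under the high-frequency dynamics and that $\vec\nu_N$ is invariant under the low-frequency dynamics.

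Invariance of $\muu_N^\perp$ follows directly from Corollary \ref{COR:InvarGauss} applied in the subspace $\P_N^\perp \H^{\frac 12-}(\T)$: the high-frequency flow is purely linear, and the law of $(\P_N^\perp \Psi,\dt \P_N^\perp \Psi)(t)$ coincides with $\muu_N^\perp$ for all $t\ge 0$. For $\vec \nu_N$, I would mirror the proof of Proposition \ref{PROP:InvarGauss}: in Fourier coordinates $a_n=\ft u(n)$, $b_n=\ft v(n)$ with $|n|\le N$, the low-frequency SDE becomes
\begin{align*}
\begin{cases}
da_n = b_n\, dt,\\
db_n = -\jb{n}^2 a_n\, dt - \widehat{\big(\g\P_N\sin(\be\P_N u)\big)}(n)\, dt + \big(-b_n\, dt + \sqrt{2}\, dB_n\big),
\end{cases}
\end{align*}
whose infinitesimal generator splits as $\L_N = \L_N^{\mathrm{Ham}} + \L_N^{\mathrm{OU}}$, where $\L_N^{\mathrm{Ham}}$ is the generator of the truncated deterministic sine-Gordon Hamiltonian ODE system with Hamiltonian
\begin{align*}
H_N(a,b) = \tfrac12\sum_{|n|\le N}\big(\jb{n}^2|a_n|^2 + |b_n|^2\big) - \tfrac{\g}{\be}\int_\T \cos(\be \P_N u)\, dx,
\end{align*}
and $\L_N^{\mathrm{OU}} = \sum_{|n|\le N}\big(-b_n \dd_{b_n} + \dd_{b_n}^2\big)$ is the Ornstein--Uhlenbeck generator acting only on the velocity variables.

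For $\L_N^{\mathrm{Ham}}$, the Hamiltonian $H_N$ is conserved and, by Liouville's theorem, the symplectic flow preserves the Lebesgue measure $\prod_{|n|\le N} da_n\, db_n$, so it preserves $e^{-H_N(a,b)}\prod da_n\, db_n$, which in the physical variables is exactly $\vec\nu_N$. For $\L_N^{\mathrm{OU}}$, the same It\^o-isometry computation used in Proposition \ref{PROP:InvarGauss} shows that the centered unit Gaussian in each $b_n$ is preserved; since this Gaussian factor in $v_N$ is the only $b$-dependence of $\vec\nu_N$, we get $(\L_N^{\mathrm{OU}})^\# \vec\nu_N = 0$. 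Adding the two contributions yields $\L_N^\# \vec\nu_N = 0$, so $\vec\nu_N$ is invariant under the low-frequency flow. Combined with invariance of $\muu_N^\perp$ and the product structure \eqref{Pa07}, this gives invariance of $\rhoo_N$ for $\vec\Phi_t^N(\cdot,\xi)$.

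The main obstacle is a bookkeeping one rather than a conceptual one: one must verify that the nonlinear term $\P_N\{\sin(\be \P_N u)\}$, which couples the Fourier modes $a_n$ for $|n|\le N$ in a genuinely nonlinear way, does not introduce any divergence contribution to $\L_N^{\mathrm{Ham}}$ beyond what is accounted for by Liouville's theorem (equivalently, that the associated truncated ODE vector field is divergence-free in $(a,b)$-coordinates and that the density $e^{\g\be^{-1}\int \cos(\be \P_N u)\,dx}$ relative to $\muu_N$ is precisely the Boltzmann weight of the sine potential in the Hamiltonian). This check is a direct finite-dimensional calculation once the truncated Hamiltonian $H_N$ is written out explicitly.
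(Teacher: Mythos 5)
Your proposal is correct and follows essentially the same route as the paper: decouple into the linear high-frequency dynamics (preserving $\muu_N^\perp$) and the finite-dimensional low-frequency SDE, then split the generator of the latter into the truncated Hamiltonian part (handled by conservation of $H_N$ plus Liouville) and the Ornstein--Uhlenbeck part (preserving the Gaussian factor in $v_N$), and recombine via the product structure \eqref{Pa07}. The ``bookkeeping'' check you flag at the end is exactly the finite-dimensional computation the paper carries out, and it goes through as you anticipate.
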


\begin{proof}
We first write
\begin{align*}
\vec\Phi^N_t(\cdot,\xi) = \big(\P_N(\Phi^N_t(\cdot,\xi), \dt\Phi^N_t(\cdot,\xi) )\big)+\big( 
\P_N^{\perp}(\Psi,\dt\Psi)(t) \big)  \text{ on }(E_N\times E_N)\oplus (E_N^{\perp}\times E_N^{\perp})
\end{align*} 

\noi
where $E_N=\text{span}\{ e^{inx}: |n|\le N \}$ and $E_N^{\perp}$ is the corresponding  orthogonal complement.  Notice that the evolution of the high frequency component $\P_N^\perp u_N = \P_N^\perp\Psi$ is described by
\begin{align}
\dt^2 \P_N^\perp\Psi + \dt  \P_N^\perp\Psi +(1-\dx^2) \P_N^\perp\Psi  = \sqrt{2} \P_N^\perp\xi,
\label{trhigh}
\end{align}

\noi
which is a linear dynamics and so it can be easily shown that the Gaussian measure $\muu_N^\perp$ is invariant under the flow of \eqref{trhigh} by following the argument in the previous subsection (Subsection \ref{SUBSEC:Stoconvol}). Regarding the low frequency part, $\P_N u_N=\P_N\Phi^N_t(\cdot,\xi)$ corresponds to the following nonlinear dynamics 
\begin{align}
\dt^2 \P_N u_N   + \dt \P_N u_N  +(1-\dx^2)  \P_N u_N 
+\g\P_N\big\{ \sin(\be\P_N u_N) \big\}= \sqrt{2} \P_N \xi  .
\label{SNLW11a}
\end{align}

\noi
By setting $(u_N, v_N) := (\P_N u_N, \dt \P_N u_N)$, the nonlinear dynamics \eqref{SNLW11a} for the low frequency part  can be written as the following (finite-dimensional) Ito SDE:
\begin{align}
\begin{split}
d  \begin{pmatrix}
u_N \\ v_N
\end{pmatrix}
& + \Bigg\{
\begin{pmatrix}
0  & -1\\
1-\dx^2 &  0
\end{pmatrix}
 \begin{pmatrix}
u_N \\ v_N
\end{pmatrix}
 +  
\begin{pmatrix}
0 \\ \g\P_N\big\{ \sin(\be \P_N u_N) \big\}
\end{pmatrix}
\Bigg\} dt \\
&   = 
  \begin{pmatrix}
0  \\ - v_N dt + \sqrt 2\P_N dW
\end{pmatrix} .
\end{split}
\label{SNLW16}
\end{align}

\noi
This exhibits that $\eqref{SNLW16} $ is composed of the deterministic Hamiltonian PDE  and the Langevin dynamics and so the generator $\L^N$ for \eqref{SNLW16} can be decomposed into two parts, denoted as $\L^N_1$ and $\L^N_2$, such that $\L^N = \L^N_1 + \L^N_2$, where $\L^N_1$ is the generator for the deterministic sine-Gordon equation  with the truncated nonlinearity:
\begin{align}
\begin{split}
d  \begin{pmatrix}
u_N \\ v_N
\end{pmatrix}
 + \Bigg\{
\begin{pmatrix}
0  & -1\\
1-\dx^2 &  0
\end{pmatrix}
 \begin{pmatrix}
u_N \\ v_N
\end{pmatrix}
 +  
\begin{pmatrix}
0 \\ \g \P_N\big\{ \sin(\be \P_N u_N) \big\}
\end{pmatrix}
\Bigg\} dt 
   = 0 
\end{split}
\label{SNLW17}
\end{align}

\noi
and $\L^N_2$ is the generator for the Ornstein-Uhlenbeck process in the momentum variable $v_N$:
\begin{align}
\begin{split}
d  \begin{pmatrix}
u_N \\ v_N
\end{pmatrix}
= 
\begin{pmatrix}
0  \\ - v_N dt + \sqrt 2\P_N dW
\end{pmatrix} .
\end{split}
\label{SNLW18}
\end{align}

\noi
Specifically, \eqref{SNLW17} represents a Hamiltonian PDE with the Hamiltonian given by:
\begin{align*}
H(u_N, v_N ) = \frac{1}{2}\int_{\T}\big(   |\dx u_N(x)|^2+ (u_N(x))^2\big) dx
+ \frac{1}{2}\int_{\T} (v_N(x))^2dx- \frac{\g}{\be}\int_{\T}\cos\big(\be u_N(x)\big)dx
\end{align*}
 
\noi
Then, thanks to  the conservation of the Hamiltonian 
$H(u_N, v_N )$ and Liouville's theorem
(on a finite-dimensional phase space $\P_N\H^{\frac 12-}(\T)$), it is established that $\vec \nu_N$ is invariant under the flow of \eqref{SNLW17}. Therefore, we have $(\L^N_1)^*\vec \nu_N = 0$. In terms of the second component $v_N=\dt  u_N$,  the measure $\vec \nu_N$ corresponds to the white noise measure $(\P_N)_{\#}\mu_0$ (namely, when projected onto the part $v_N$).
Hence, by exploiting the fact that the Langevin dynamics \eqref{SNLW18}
preserves the Gaussian measure, we obtain that $\vec \nu_N$ is also invariant under the dynamics of \eqref{SNLW18} (i.e.~$(\L^N_2)^*\vec \nu_N = 0$). By combining the results, we get $(\L^N)^*\vec \nu_N = (\L^N_1)^*\vec \nu_N +  (\L^N_2)^*\vec  \nu_N = 0$. This shows invariance of $\vec \nu_N$ under \eqref{SNLW16} and hence under \eqref{SNLW11a}.     

By combining \eqref{Pa07} and invariance of $\vec \nu_N$ and $\muu_N^\perp$ under \eqref{SNLW16} and \eqref{trhigh}, respectively,  we conclude that for any bounded and continuous function $F:\H^{\frac 12-}(\T) \to \R$ 
\begin{align*}
\int \E \Big[ F(\vec \Phi_t^N(\u_0,\xi))  \Big] d\rhoo_N (\u_0)=\int F(\u_0)   d\rhoo_N(\u_0),
\end{align*}

\noi
which shows that the truncated Gibbs measure $\rhoo_N$ in \eqref{TGibbs} is invariant under the folow of \eqref{SdSG2}.
\end{proof}

\begin{corollary}\label{COR:invar}
The Gibbs measure $\rhoo$ in \eqref{Gibbslim} is invariant for the Markov process $\vec \Phi_t(\cdot,\xi)$ associated with the flow of \eqref{SdSG}. 
\end{corollary}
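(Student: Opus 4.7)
The plan is to pass to the limit $N \to \infty$ in the invariance identity from Proposition \ref{PROP:Finvar},
\begin{equation*}
\int \Pt{t}^N F(\u_0) \, d\rhoo_N(\u_0) = \int F(\u_0) \, d\rhoo_N(\u_0),
\end{equation*}
for an arbitrary bounded continuous $F:\H^{\frac 12-}(\T) \to \R$, and to identify the two limits with the corresponding invariance identity for $\rhoo$ and $\Pt{t}$. The right-hand side converges to $\int F \, d\rhoo$ directly from the total variation convergence $\rhoo_N \to \rhoo$ provided by Proposition \ref{PROP:Gibbs}.

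For the left-hand side, I would write $\rhoo_N = F_N \, d\muu$ and $\rhoo = F_\infty \, d\muu$, so that by Proposition \ref{PROP:Gibbs} one has $F_N \to F_\infty$ in $L^1(\muu)$ with $F_\infty \in \bigcap_{p<\infty} L^p(\muu)$. Using the uniform bound $\|\Pt{t}^N F\|_\infty \le \|F\|_\infty$, the decomposition
\begin{equation*}
\int \Pt{t}^N F \, d\rhoo_N = \int \Pt{t}^N F \cdot F_\infty \, d\muu + \int \Pt{t}^N F \cdot (F_N - F_\infty)\, d\muu
\end{equation*}
reduces matters to two estimates: the second summand is dominated by $\|F\|_\infty \|F_N - F_\infty\|_{L^1(\muu)}$ and vanishes, while the first converges to $\int \Pt{t} F \cdot F_\infty \, d\muu = \int \Pt{t} F \, d\rhoo$ by bounded convergence, provided one has the pointwise limit $\Pt{t}^N F(\u_0) \to \Pt{t} F(\u_0)$ for $\muu$-almost every $\u_0$.

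The main technical step is this pointwise convergence. By \eqref{DaPra}, \eqref{Dapra1} and bounded convergence in $\o$, it reduces to the pathwise statement $v_N(t) \to v(t)$ in $\H^1(\T)$ almost surely, where $v_N$ and $v$ solve the Duhamel equation \eqref{Pert} and its $N=\infty$ analogue for \eqref{SNLW13}. Setting $Y_N = S(t)\u_0 + \stick_t(\xi) + v_N$ and $Y = S(t)\u_0 + \stick_t(\xi) + v$, I would subtract the two integral equations and decompose
\begin{equation*}
\P_N \sin(\be \P_N Y_N) - \sin(\be Y) = \P_N\bigl[\sin(\be \P_N Y_N) - \sin(\be Y_N)\bigr] + \P_N\bigl[\sin(\be Y_N) - \sin(\be Y)\bigr] - \P_N^\perp \sin(\be Y).
\end{equation*}
Exploiting the global Lipschitz bound on $\sin$, the uniform-in-$N$ estimates on $\stick_t(\xi)$ and on the residual $v_N$ already used in Proposition \ref{PROP:GWP}, and the almost sure convergence $\P_N^\perp \to 0$ strongly on the relevant Sobolev norms, a Gronwall argument in $C([0,T];\H^1)$ then yields $\sup_{t \in [0,T]} \|v_N - v\|_{\H^1} \to 0$ almost surely for every $T > 0$.

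Combining these two limits gives $\int \Pt{t} F \, d\rhoo = \int F \, d\rhoo$ for every bounded continuous $F$, which extends to bounded Borel $F$ by a standard monotone class argument and establishes the asserted invariance. The only substantive work lies in the approximation $v_N \to v$, which is essentially parallel to the global well-posedness proof of Proposition \ref{PROP:GWP} thanks to the boundedness and global Lipschitz character of $\sin$; the remaining step of exchanging limits against the densities $F_N$ is a routine dominated-convergence bookkeeping.
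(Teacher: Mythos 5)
Your proposal is correct and follows essentially the same route as the paper: pass to the limit in the invariance identity of Proposition \ref{PROP:Finvar}, using the $L^p(\muu)$ convergence of the densities from Proposition \ref{PROP:Gibbs} together with the pathwise convergence $\vec\Phi_t^N(\u_0,\xi)\to\vec\Phi_t(\u_0,\xi)$ and dominated convergence. The only difference is one of detail: the paper simply asserts the pathwise convergence of the truncated flows, whereas you sketch the Gronwall argument for $v_N\to v$ in $C([0,T];\H^1)$ that justifies it.
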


\begin{proof}
For every $t \ge0$, $\o \in \O$, and $\u_0=(u_0,v_0)\in \H^{\frac 12-}(\T)$,  we have
\begin{align*}
\|\vec \Phi_t^N(\u_0,\xi(\o))-\vec \Phi_t(\u_0,\xi(\o)) \|_{\H^{\frac 12-}} \to 0
\end{align*}

\noi
as $N\to \infty$.
Let $F:\H^{\frac 12-}(\T) \to \R$ be a bounded and continuous function. Thanks to the dominated convergence theorem and Proposition \ref{PROP:Finvar}, we have
\begin{align*}
\int \E \Big[ F(\vec \Phi_t(\u_0,\xi))  \Big] d\rhoo (\u_0)&=\int \E\Big[F(\vec \Phi_t(\u_0,\xi))  \Big]e^{\frac {\g}{\be }\int_\T \cos(\be u_0)dx} d\muu( \u_0)\\
&=\lim_{N\to \infty }\int \E\Big[ F(\vec \Phi_t^N(\u_0,\xi))  \Big] e^{\frac {\g}{\be} \int_\T \cos(\be \P_N u_0 )dx} d\muu(\u_0)\\
&=\lim_{N\to \infty } \int F(\u_0) e^{\frac {\g}{\be}\int_\T \cos(\be \P_N u_0 )dx} d\muu( \u_0)\\
&=\int F(\u_0)  d\rhoo(\u_0).
\end{align*}

\noi
This completes the proof of Corollary \ref{COR:invar}.

\end{proof}

\section{Quantified ergodicity}
\label{SEC:exp}

\subsection{Exponential ergodicity}
\label{SUBSEC:EXPERG}

In this subsection, we present that the Gibbs measure $\rhoo$ is the unique invariant measure for the Markov process $\vec \Phi_t(\cdot,\xi)$. Moreover, its transition probabilities converge exponentially fast to the Gibbs measure in a type of 1-Wasserstein distance by assuming several ingredients whose proof will be present in subsequent subsections. Before going ahead, we first show that the dynamics \eqref{SdSG} does not satisfy the strong Feller property.

\begin{proposition}\label{PROP:fail}
Let $F(\u_0):= \ind_{\{\u_0 \in \H^{1/2-}\}}$. Then, we have  
\begin{align*}
\Pt{t}F(\u_0)=\E[F(\vec \Phi_t(\u_0,\xi))] = F(\u_0)
\end{align*}

\noi
for every $t \ge 0$. Therefore, $\Pt{t} F$ is a discontinuou function on $\H^{\frac 12-}$. 

\end{proposition}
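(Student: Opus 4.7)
The plan is to use the Da Prato--Debussche decomposition \eqref{Dapra1} together with the non-smoothing character of the linear damped-wave propagator $\vec S(t)$ in order to establish the almost sure pathwise equivalence
$$ \vec\Phi_t(\u_0,\xi)\in\H^{\frac12-} \ \Longleftrightarrow\ \u_0\in\H^{\frac12-}. $$
Taking expectations then yields $\Pt{t}F(\u_0)=F(\u_0)$, and discontinuity of $\Pt{t}F$ follows from the fact that the set $\{\u_0\in\H^{\frac12-}\}$ is a Borel subset which is neither open nor closed in any ambient topology in which the Markov semigroup acts, so one can approximate a point outside by a sequence inside and vice versa.

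To establish the equivalence, rewrite \eqref{Dapra1} as
$$ \vec\Phi_t(\u_0,\xi) - \vec S(t)\u_0 = \bigl(\stick_t(\xi),\,\dt\stick_t(\xi)\bigr) + \bigl(v(t),\,\dt v(t)\bigr). $$
The first summand is in $\H^{\frac12-}$ almost surely by the regularity of the stochastic convolution recorded after \eqref{W2a}; the second summand is in $\H^{1}\subset\H^{\frac12-}$ almost surely by the global well-posedness statement Proposition~\ref{PROP:GWP}. Hence the right-hand side of the display above is a.s.\ in $\H^{\frac12-}$, so membership of $\vec\Phi_t(\u_0,\xi)$ in $\H^{\frac12-}$ is determined solely by that of $\vec S(t)\u_0$.

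For this last point, I would verify that $\vec S(t):\H^s\to\H^s$ is a Banach-space isomorphism for every $s\in\R$: from the explicit formulas \eqref{St0}--\eqref{D1}, $\vec S(t)$ acts mode-by-mode as a $2\times 2$ matrix of zeroth-order Fourier multipliers whose determinant equals the Wronskian $e^{-t}\neq 0$, so both $\vec S(t)$ and its inverse are bounded on each $\H^s$. In particular $\vec S(t)\u_0\in\H^{\frac12-}$ iff $\u_0\in\H^{\frac12-}$, which together with the previous paragraph yields $F(\vec\Phi_t(\u_0,\xi))=F(\u_0)$ almost surely, hence $\Pt{t}F=F$. Discontinuity is then witnessed, for instance, by approximating any $\u_0\notin\H^{\frac12-}$ by its Fourier truncations, which lie in $\H^{\frac12-}$ and converge to $\u_0$ in a slightly weaker norm. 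The only mildly delicate ingredient is the isomorphism property of $\vec S(t)$, and this is the structural manifestation of the fact that the hyperbolic dynamics is non-smoothing, which is precisely the feature responsible for the failure of strong Feller here.
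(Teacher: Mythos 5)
Your proposal is correct and follows essentially the same route as the paper: the Da Prato--Debussche decomposition $\vec\Phi_t(\u_0,\xi)=\vec S(t)\u_0+\vec\stick_t(\xi)+\v(t)$ with $\vec\stick_t(\xi)\in\H^{\frac12-}$ and $\v(t)\in\H^1$ almost surely, so that membership of the solution in $\H^{\frac12-}$ reduces to that of $\vec S(t)\u_0$, together with the invertibility (persistence of regularity) of $\vec S(t)$ on each $\H^s$. Your mode-by-mode Wronskian justification of the isomorphism property and the explicit Fourier-truncation witness of discontinuity are merely more detailed versions of what the paper asserts.
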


\begin{proof}
We follow the argument in \cite[Proposition 5.1]{FT}. From \eqref{Dapra1} and \eqref{DaDD2}, we recall that $\Phi_{t}(\u_0; \xi)=S(t)\u_0+\stick_{t}(\xi)+v(t)$ and $\vec \Phi_{t}(\u_0; \xi)=(\Phi_{t}(\u_0; \xi), \dt \Phi_{t}(\u_0; \xi)) $.
Note that for every $t \ge 0$, $\vec \stick_t(\xi) \in  \H^{\frac 12-}$ and $\v(t) \in \H^1$ where $\vec \stick_t(\xi)=( \stick_t(\xi), \dt \stick_t(\xi))$ and $\v(t)=(v(t),\dt v(t))$. Hence, we have 
\begin{align*}
\big\{\vec \Phi_t(\u_0,\xi) \in \H^{\frac 12-} \big\}=\big\{ \vec S(t)\u_0 \in \H^{\frac 12-} \big\}.
\end{align*}

\noi
Since the linear wave propagator $\vec S(t)$ is invertible\footnote{
The linear propagator does not give any improvement of regularity (i.e.~persistence of regularity).} on $\H^s$, $s\in \R$, we obtain 
\begin{align*}
\big\{ \vec S(t)\u_0 \in \H^{\frac 12-} \big\}=\big\{  \u_0 \in \H^{\frac 12-} \big\}.
\end{align*}
 
\noi
Therefore, we have
\begin{align*}
\Pt{t}F(\u_0) = \E[F(\vec \Phi_t(\u_0,\xi))] = F(\u_0).
\end{align*}

\noi
Hence, $\Pt{t}F$ is a discontinuous function on $\H^{\frac 12-}$.
\end{proof}

In our context, due to the absence of the strong Feller property, it is not anticipated for strong convergence in the total variation distance to occur. Therefore, instead of proving the existence of a spectral gap in the total variation distance, we show that the Markov semigroup $\Pt{t}$ has a contraction property in a particular 1-Wasserstein distance (see \eqref{STARCONTRAC} below).
We set the following weighted metric on $\H^{\frac 12-}(\T)$ 
\begin{align*}
\varrho_\ld(\u_1^0,\u_2^0)=\inf_{\gamma:\u_1^0\to \u_2^0} \int_0^1  e^{\ld  \| \g(t) \|_{\H^{1/2-}}^2 }
\| \dot \g(t) \|_{\H^{\frac 12-}} dt
\end{align*}

\noi
where the infimum is taken over all Lipschitz continuous paths $\g: [0,1] \to \H^{\frac 12-}(\T)$ connecting $\u_1^0$ to $\u_2^0$. We consider the following distance
\begin{align}
d_\dl(\u_1^0, \u_2^0)=1\wedge \dl^{-1}\varrho_\ld(\u_1^0,\u_2^0)
\label{wedmet}
\end{align}

\noi
and its corresponding Wasserstein-1 distance
\begin{align*}
\W_{d_\dl}\big(\Pt{t}^*\dl_{\u_1^0}, \Pt{t}^*\dl_{\u_2^0} \big)&=\inf_{\pi } \intt_{\H^{\frac 12-}\times \H^{\frac 12-}}d_\dl(\vec {\bf u}, \vec {\bf v}) \pi(d \vec {\bf u},d \vec {\bf v}),
\end{align*}

\noi
where the infimum runs over all coupling $\pi \in \mathcal{C}(\Pt{t}^*\dl_{\u_1^0}, \Pt{t}^*\dl_{\u_2^0} )$. Thanks to the Monge–Kantorowitch duality, we have
\begin{align}
\W_{d_\dl}\big(\Pt{t}^*\dl_{\u_1^0}, \Pt{t}^*\dl_{\u_2^0} \big)=\sup_{[F]_{\Lip_{d_\dl}} \le 1}\bigg(\int F(y) d\Pt{t}^*\dl_{\u_1^0}(y)-\int F(y) d\Pt{t}^*\dl_{\u_2^0} (y)  \bigg)
\label{MGDU}
\end{align}

\noi
where
\begin{align*}
[F]_{\Lip_{d_\dl}}=\sup_{x\neq y} \frac{|F(x)-F(y)|}{d_\dl(x,y)}.
\end{align*}

\noi
In fact, we can further restrict the space of Lipschitz functions $F$ in the definition of \eqref{MGDU} as follows:
\begin{align}
\W_{d_\dl}\big(\Pt{t}^*\dl_{\u_1^0}, \Pt{t}^*\dl_{\u_2^0} \big)=\sup_{\substack{[F]_{\Lip_{d_\dl}} \le 1\\ \| F\|_{L^\infty}\le \frac 12}}\bigg(\int F(y) d\Pt{t}^*\dl_{\u_1^0}(y)-\int F(y) d\Pt{t}^*\dl_{\u_2^0} (y)  \bigg).
\label{MGDU1}
\end{align}

\noi
For the proof, see \cite[Lemma 5.2]{FT}. For convenience of notation, we denote by 
$\text{Lip}_{d_\dl}$ the set of $d_{\dl}$-Lipschitz continuous functions such that $[F]_{\Lip_{d_\dl}} \le 1$.

Based on the strategy introduced by \cite{HMS}, the geometric ergodicity follows from suitable large-time smoothing estimates (see Lemma \ref{LEM:gra} and \ref{LEM:gra1} below) for the Markov process, 
closely related to so-called asymptotic strong Feller property, together with a suitable irreducibility condition (see Lemma \ref{LEM:visit} below) for system \eqref{SdSG}. 
More precisely, we prove that (i) Wasserstein  distance $\W_{d_\dl}$ associated with the distance-like function $d_\dl$ \eqref{wedmet} is contracting for $\Pt{t}$ (Proposition \ref{PROP:contract}) and (ii) bounded sets are $d_\dl-$small (Proposition \ref{PROP:dsmall}) by exploiting the smoothing estimates and irreducibility condition. Then, the exponential convergence rate toward the Gibbs measure \eqref{Gibbs1} is obtained through a Wasserstein  distance.

\begin{proposition}[contracting distances]\label{PROP:contract}
There exists $\dl>0$ and $t^*>0$ such that the metric $d_\dl(\u_1^0,\u_2^0)=1\wedge \dl^{-1}\varrho_\eta(\u_1^0,\u_2^0)$  is contracting for the Markov semigroup $\Pt{t}$. More precisely, for every $t\ge t^*$, the following estimate
\begin{align*}
\W_{d_\dl}\big(\Pt{t}^*\dl_{\u_1^0}, \Pt{t}^*\dl_{\u_2^0} \big) \le \frac 12 d_\dl(\u_1^0, \u_2^0) 
\end{align*}

\noi
holds for every pair $\u_1^0, \u_2^0 \in \H^{\frac 12-}(\T)$ with  $d_\dl(\u_1^0, \u_2^0)<1$.

\end{proposition}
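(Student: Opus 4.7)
The plan is to combine the restricted Monge--Kantorovich duality \eqref{MGDU1} with the large-time smoothing estimates from Lemmas \ref{LEM:gra} and \ref{LEM:gra1}, integrating the derivative of $\Pt{t} F$ along a near-optimal weighted path joining $\u_1^0$ to $\u_2^0$. Since $d_\dl(\u_1^0,\u_2^0)<1$, definition \eqref{wedmet} forces $\varrho_\ld(\u_1^0,\u_2^0) = \dl\, d_\dl(\u_1^0,\u_2^0) < \dl$. For a test function $F\in \Lip_{d_\dl}$ with $\|F\|_{L^\infty}\le \tfrac12$ and arbitrary $\eps>0$, I first select a Lipschitz path $\g:[0,1]\to \H^{\frac12-}(\T)$ from $\u_1^0$ to $\u_2^0$ whose weighted length $\int_0^1 e^{\ld\|\g(r)\|_{\H^{\frac12-}}^2}\|\dot\g(r)\|_{\H^{\frac12-}}\, dr$ lies within $\eps$ of $\varrho_\ld(\u_1^0,\u_2^0)$. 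The fundamental theorem of calculus then reduces the task to controlling
\begin{align*}
|\Pt{t}F(\u_1^0)-\Pt{t}F(\u_2^0)| \le \int_0^1 |D\Pt{t}F(\g(r))\cdot \dot\g(r)|\, dr.
\end{align*}

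Next, I invoke the asymptotic-smoothing estimates of Lemmas \ref{LEM:gra} and \ref{LEM:gra1}, which in the spirit of Hairer--Mattingly should produce a bound of the shape
\begin{align*}
|D\Pt{t}F(\u_0)\cdot \xi| \le C_1 e^{\kk\|\u_0\|^2_{\H^{\frac12-}}}\|F\|_{L^\infty}\|\xi\|_{\H^{\frac12-}} + C_2 e^{-\al t}\, e^{\kk\|\u_0\|^2_{\H^{\frac12-}}}[F]_{\u_0}\|\xi\|_{\H^{\frac12-}},
\end{align*}
where $[F]_{\u_0}$ denotes an effective local $\H^{\frac12-}$-Lipschitz constant of $F$ at $\u_0$. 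Because $F$ is $d_\dl$-Lipschitz, differentiating along line segments in $\H^{\frac12-}$ via \eqref{wedmet} yields $[F]_{\u_0}\le \dl^{-1} e^{\ld\|\u_0\|^2_{\H^{\frac12-}}}$. Choosing $\ld$ in the definition of $\varrho_\ld$ at least as large as $2\kk$ then guarantees that both factors of $e^{\kk\|\cdot\|^2}$ are absorbed into $e^{\ld\|\cdot\|^2}$ when the estimate is integrated along $\g$.

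Integrating the pointwise gradient bound along the path, using $\|F\|_{L^\infty}\le \tfrac12$, and sending $\eps\to 0$,
\begin{align*}
|\Pt{t}F(\u_1^0)-\Pt{t}F(\u_2^0)| \le \big(\tfrac{C_1}{2} + C_2\dl^{-1} e^{-\al t}\big)\varrho_\ld(\u_1^0,\u_2^0) = \big(\tfrac{C_1\dl}{2}+C_2 e^{-\al t}\big) d_\dl(\u_1^0,\u_2^0).
\end{align*}
Taking the supremum over admissible $F$ in \eqref{MGDU1}, and then choosing first $\dl>0$ small enough that $\tfrac{C_1\dl}{2}\le \tfrac14$ and subsequently $t^*$ large enough that $C_2 e^{-\al t^*}\le \tfrac14$, produces the desired contraction $\W_{d_\dl}(\Pt{t}^*\dl_{\u_1^0},\Pt{t}^*\dl_{\u_2^0})\le \tfrac12\, d_\dl(\u_1^0,\u_2^0)$ for every $t\ge t^*$.

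The main obstacle is calibrating the weight $\ld$ so that the exponential Lyapunov-type factor $e^{\kk\|\u_0\|^2}$ coming from the Malliavin-based smoothing bound \emph{and} the exponential growth implicit in estimating the effective local $\H^{\frac12-}$-Lipschitz constant of a $d_\dl$-Lipschitz test function are simultaneously dominated by the weight appearing in the definition of $\varrho_\ld$. This calibration hinges on the precise quantitative form of Lemmas \ref{LEM:gra}--\ref{LEM:gra1}, which in turn rest on the Malliavin calculus argument together with the modified-energy Lyapunov structure highlighted in the introduction; in particular, the $e^{-\al t}$ smoothing factor must beat the weight $\ld$ one chooses in order to close the estimate.
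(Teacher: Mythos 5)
Your proposal is correct and follows essentially the same route as the paper: Monge--Kantorovich duality in the restricted form \eqref{MGDU1}, the gradient bounds of Lemmas \ref{LEM:gra} and \ref{LEM:gra1} (whose exponential weight is already stated with the same $\ld$ as in $\varrho_\ld$, so the calibration you worry about is built in), integration along a near-optimal Lipschitz path, and the final choice of $\dl$ small and $t^*$ large.
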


\begin{proposition}[$d_\dl$-small set]\label{PROP:dsmall}
Let $R>0$ and $\dl>0$. Then, there exists $t^*=t^*(R,\dl)>0$ such that the set $B_R$ is $d_\dl$-small for the Markov semigroup $\Pt{t}$. In other words, for every $t\ge t^*$ and
every $\u_1^0, \u_2^0 \in B_R$, we have that
\begin{align*}
\W_{d_\dl}\big(\Pt{t}^*\dl_{\u_1^0}, \Pt{t}^*\dl_{\u_2^0} \big) <1
\end{align*}

\noi
where
\begin{align*}
B_R=\Big\{\u_0 \in \H^{\frac 12-}: \| \u_0 \|_{\H^{\frac 12-} }\le R \Big\}.
\end{align*}

\end{proposition}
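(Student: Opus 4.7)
The plan is to combine the irreducibility condition of Lemma \ref{LEM:visit} with a mixture coupling that, with positive probability, sends both trajectories into a small ball centred at the origin, on which the pseudo-distance $d_\dl$ is uniformly small.

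I first apply the irreducibility condition: for every $R, \eta > 0$, Lemma \ref{LEM:visit} produces a time $t^* = t^*(R, \eta, \dl) > 0$ and a constant $p = p(R, \eta) > 0$ such that
\begin{align*}
\inf_{\u_0 \in B_R} \Pt{t}^* \dl_{\u_0}\big(B_\eta^0\big) \geq p \quad \text{for every } t \geq t^*,
\end{align*}
where $B_\eta^0 := \{\u \in \H^{\frac 12-}(\T) : \|\u\|_{\H^{\frac 12-}} \leq \eta\}$. Next, I control $d_\dl$ on this small ball. For $\u, \vec{{\bf v}} \in B_\eta^0$ the straight-line path $\g(s) = \u + s(\vec{{\bf v}} - \u)$ stays inside $B_\eta^0$, so
\begin{align*}
\varrho_\ld(\u, \vec{{\bf v}}) \leq \int_0^1 e^{\ld \|\g(s)\|_{\H^{\frac 12-}}^2} \|\vec{{\bf v}} - \u\|_{\H^{\frac 12-}} \, ds \leq 2\eta \, e^{\ld \eta^2},
\end{align*}
and choosing $\eta$ small enough (depending on $\dl, \ld$) gives $d_\dl(\u, \vec{{\bf v}}) \leq \tfrac 12$ uniformly on $B_\eta^0$.

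Finally, I construct a mixture coupling. Writing $\mu_i := \Pt{t}^* \dl_{\u_i^0}$, the bound $\mu_i(B_\eta^0) \geq p$ permits the decomposition $\mu_i = p\nu_i + (1 - p)\tilde \mu_i$, with each $\nu_i$ a probability measure supported on $B_\eta^0$. The coupling $\pi$ that draws from $\nu_1 \otimes \nu_2$ with probability $p$ and from $\tilde \mu_1 \otimes \tilde \mu_2$ with probability $1-p$ has the correct marginals $\mu_1, \mu_2$. Combining the smallness estimate on the overlap event with the trivial bound $d_\dl \leq 1$ elsewhere,
\begin{align*}
\W_{d_\dl}\big(\Pt{t}^*\dl_{\u_1^0}, \Pt{t}^*\dl_{\u_2^0}\big) \leq \int d_\dl \, d\pi \leq p \cdot \tfrac 12 + (1 - p) \cdot 1 = 1 - \tfrac p2 < 1.
\end{align*}

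The main obstacle is the first step: the irreducibility lemma must provide a lower bound on the return probability that is \emph{uniform} over $\u_0 \in B_R$ and holds for \emph{all} $t \geq t^*$, rather than merely pointwise or at a single time. If Lemma \ref{LEM:visit} furnishes only a pointwise or single-time bound, an additional compactness or semigroup-nonexpansiveness argument (together with the contracting property from Proposition \ref{PROP:contract}) is required to promote it. The remaining ingredients are routine: the geometric estimate on $d_\dl$ inside $B_\eta^0$ is elementary, and the mixture coupling is the classical overlap construction.
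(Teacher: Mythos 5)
Your proof is correct and follows essentially the same route as the paper: invoke Lemma \ref{LEM:visit} for a uniform-in-$\u_0$ lower bound on the probability of entering a small ball $B_\eta^0$, use the straight-line path to show $d_\dl\le\tfrac12$ on that ball for $\eta$ small, and conclude via a coupling; the lemma as stated does give the uniform bound for all $t\ge T^*$, so your flagged obstacle does not arise. The only (harmless) difference is that the paper uses the independent product coupling, yielding $1-\tfrac12 p^2$, whereas your mixture coupling yields the slightly sharper $1-\tfrac p2$.
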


We point out that another key ingredient to prove Proposition \ref{PROP:contract} is to establish the existence of a Lyapunov function for \eqref{SdSG}. A Lyapunov function for a Markov semigroup $\{ \Pt{t} \}_{t\ge 0}$ over a Polish space $\H^{\frac 12-}$ is a function $V:\H^{\frac 12-}(\T)\to [0,\infty]$ such that $V$ is integrable with respect to $\Pt{t}^*\dl_{\u_0}$ for every $\u_0 \in \H^{\frac 12-}$ and $t\ge 0$. More precisely, there exist constants $C, \g$, and  $K>0$ such that the estimate
\begin{align}
\int V(y) \Pt{t}^*(\u_0, dy) \le C_V e^{-\g t}V(\u_0)+K_V
\label{Lya}
\end{align}

\noi
holds for every $\u_0 \in \H^{\frac 12-}$ and $t\ge 0$. Note that unlike parabolic equations, nonlinear damped  wave equations do not have a strong dissipation and so the existence of a Lyapunov function is not a priori clear. We first introduce the following propositions whose proof will be present in Subsections \ref{SUBSEC:Lya}.
\begin{lemma}\label{LEM:Lya}
There exist $\ld_0,\g, C, K>0$ such that for every $0<\ld \le \ld_0$, the Markov semigroup $\Pt{t}$ satisfies the estimate \eqref{Lya} 
\begin{align}
\E\Big[ \exp\big(\ld \|\vec \Phi_t(\u_0,\xi)\|_{\H^{\frac 12-}}^2 \big) \Big] \le C e^{-\g t} \exp(\ld \|\u_0\|_{\H^\frac 12-}^2)+K
\label{Lyapu0}
\end{align}

\noi
for every $\u_0\in \H^{\frac 12-}$ and $t\ge 0$.

\end{lemma}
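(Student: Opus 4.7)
The plan is to exploit the first-order decomposition $\vec \Phi_t(\u_0,\xi) = \vec S(t)\u_0 + \vec\stick_t(\xi) + \vec v(t)$ from \eqref{Dapra1}--\eqref{DaDD2} and analyze the three summands separately: the linear part $\vec S(t)\u_0$, which decays exponentially in $\H^{\frac 12-}$ by \eqref{linest}; the stochastic convolution $\vec\stick_t(\xi)$, which is a mean-zero Gaussian whose $\H^{\frac 12-}$-covariance stays bounded uniformly in $t$ thanks to the damping factor in the kernel \eqref{D1}; and the nonlinear remainder $\vec v(t)$, for which a uniform-in-$t$ deterministic $\H^1$-bound is expected because the nonlinearity $\sin(\be\,\cdot)$ is bounded.

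For the Gaussian piece I would read off the covariance of $\vec\stick_t(\xi)$ from \eqref{W2}--\eqref{D1} to conclude that $\E\|\vec\stick_t(\xi)\|_{\H^{\frac 12-}}^2$ is uniformly bounded in $t\ge 0$, and then apply Fernique's theorem to produce $\ld_0, M > 0$ with
\begin{align*}
\sup_{t\ge 0}\E\big[\exp\big(\ld_0 \|\vec\stick_t(\xi)\|_{\H^{\frac 12-}}^2\big)\big] \le M.
\end{align*}
For the remainder I would start from the mild formulation (the $N=\infty$ analog of \eqref{Pert}) together with the smoothing-with-damping bound $\|\D(\tau)f\|_{H^1} + \|\dt\D(\tau)f\|_{L^2} \les e^{-\tau/2}\|f\|_{L^2}$, which one reads off directly from the Fourier multiplier representation \eqref{D1}. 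Since $\|\sin(\be u(t'))\|_{L^2(\T)} \les 1$ deterministically (because $|\sin|\le 1$ and $|\T|<\infty$, regardless of the regularity of $u$), this yields
\begin{align*}
\|\vec v(t)\|_{\H^1} \les |\g|\int_0^t e^{-(t-t')/2}\,dt' \le C_0
\end{align*}
for a deterministic constant $C_0$ independent of $\u_0, \xi, t$.

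To assemble the Lyapunov bound, I would fix a small $\eps\in(0,1)$, combine the above with the elementary inequality $(a+b+c)^2 \le (1+\eps)a^2 + C_\eps(b^2+c^2)$ and the embedding $\H^1 \embeds \H^{\frac 12-}$ to get
\begin{align*}
\|\vec \Phi_t(\u_0,\xi)\|_{\H^{\frac 12-}}^2 \le (1+\eps)e^{-t}\|\u_0\|_{\H^{\frac 12-}}^2 + C_\eps\big(\|\vec\stick_t(\xi)\|_{\H^{\frac 12-}}^2 + C_0^2\big),
\end{align*}
and then, for $\ld$ small enough that $\ld C_\eps \le \ld_0$, exponentiate and apply the Fernique bound to obtain
\begin{align*}
\E\big[\exp\big(\ld\|\vec\Phi_t(\u_0,\xi)\|_{\H^{\frac 12-}}^2\big)\big] \le M'\exp\big(\ld(1+\eps)e^{-t}\|\u_0\|_{\H^{\frac 12-}}^2\big)
\end{align*}
with $M' = M\exp(\ld C_\eps C_0^2)$. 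Finally, for $t$ large enough that $\ta(t):=(1+\eps)e^{-t}\le 1$, the convexity of $\exp$ yields $\exp(\ld\,\ta(t)\,x) \le \ta(t)\exp(\ld x) + (1-\ta(t))$, which converts the previous line into \eqref{Lyapu0} (with $\g=1$ up to harmless constants); the short-time range $t\in[0,\log(1+\eps)]$ is absorbed into $C$ since on that interval the left-hand side is trivially dominated by a multiple of $\exp(\ld\|\u_0\|^2_{\H^{\frac 12-}})$.

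The main obstacle I anticipate is the uniform $\H^1$-bound on $\vec v$: for a generic damped wave equation one would typically need a modified-energy trick to build a Lyapunov function, since the linear damping alone does not a priori pass to the nonlinear level. Here, however, the boundedness of $\sin$ combined with the $H^1$-smoothing of $\D$ bypasses any such modification and yields the required bound essentially for free. This simplification is specific to the one-dimensional regime without renormalization; extending the scheme to \eqref{SdSGT2} would require a more delicate analysis of a renormalized stochastic object in place of $\sin(\be u)$.
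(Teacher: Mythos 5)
Your route is genuinely different from the paper's. You work pathwise through the first-order expansion $\vec\Phi_t=\vec S(t)\u_0+\vec\stick_t(\xi)+\vec v(t)$, bound $\vec v$ deterministically and uniformly via the boundedness of $\sin$ together with the damped Duhamel kernel (this is exactly the content of Proposition \ref{PROP:GWP}), and handle $\vec\stick_t(\xi)$ by Fernique. The paper instead works at the level of the infinitesimal generator of the truncated dynamics: it introduces a modified energy $V_{\text{mod}}=V+\tfrac14\|u\|_{H^{-1/2-}}^2+\tfrac12\jb{u,\dt u}_{H^{-1/2-}}$, shows $\L_N V_{\text{mod}}\le -\tfrac14\|u\|_{H^{1/2-}}^2-\tfrac14\|v\|_{H^{-1/2-}}^2+C$, and then runs It\^o/Gronwall on $\exp(\ld V_{\text{mod}})$ before passing $N\to\infty$. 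Your approach buys simplicity (no generator computation, no It\^o correction terms, no truncation/limit) and it exposes clearly that the 1d result hinges on the boundedness of the nonlinearity; the paper's approach buys a genuine differential inequality $\frac{d}{dt}\E G\le-\g\E G+C$, which is what produces a clean exponential decay valid at \emph{all} times, and it generalizes to nonlinearities that are not bounded.

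The one place your argument does not close as written is the short-time regime, and it is precisely the point the modified energy is designed to handle. Two issues compound there. First, \eqref{linest} carries an implicit constant $C_1>1$: the per-mode energy $\jb{n}^2|a_n|^2+|b_n|^2$ satisfies $\frac{d}{dt}E_n=-2|b_n|^2$, so $\vec S(t)$ is non-expansive but does \emph{not} satisfy $\|\vec S(t)\u_0\|^2\le e^{-t}\|\u_0\|^2$ for small $t$ (take data of the form $(f,0)$). Second, even granting non-expansiveness, Young's inequality forces the factor $(1+\eps)$ in front of $\|\u_0\|_{\H^{1/2-}}^2$. Consequently your final bound for $t$ below the threshold is $M'\exp\big(\ld(1+\eps)C_1^2\|\u_0\|^2_{\H^{1/2-}}\big)$, and this is \emph{not} ``absorbed into $C$'': the ratio $\exp(\ld(1+\eps)C_1^2\|\u_0\|^2)/\exp(\ld\|\u_0\|^2)$ diverges as $\|\u_0\|_{\H^{1/2-}}\to\infty$, so no choice of $C,K$ recovers \eqref{Lyapu0} with the same $\ld$ on both sides for small $t$ and large data. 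What your argument honestly proves is the lossy statement $\E[\exp(\ld\|\vec\Phi_t\|^2_{\H^{1/2-}})]\le Ce^{-\g t}\exp(\ld'\|\u_0\|^2_{\H^{1/2-}})+K$ with $\ld'=(1+\eps)C_1^2\ld$. To be fair, the paper's own final step incurs an analogous loss (a factor $4$ in the exponent, coming from $\tfrac12 V\le V_{\text{mod}}\le 2V$) which it passes over silently, and such a constant relabeling of $\ld$ is harmless for every downstream use (Lemma \ref{LEM:gra1}, the metric $\varrho_\ld$, and the Harris-type theorem). So your proof is acceptable once you either state the conclusion in the lossy form and relabel $\ld$, or replace the Young step by an argument that is exact at $t=0$; but the sentence claiming the short-time range is ``trivially'' absorbed should be removed, as that step is false as stated.
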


By exploiting the result proven in \cite[Theorem 4.8]{HMS}, it is possible to show that 
if we establish Propositions \ref{PROP:contract}, \ref{PROP:dsmall}, and Lemma \ref{LEM:Lya}, then there exist $T>0$ and some constant $c>0$ such that
\begin{align}
\W_{ \wt d_\dl}\big( \Pt{t}^*\nu_1, \Pt{t}^*\nu_2 \big)\le e^{-c \lfloor \frac t{T} \rfloor } \W_{ \wt d_\dl}(\nu_1, \nu_2)
\label{STARCONTRAC}
\end{align}

\noi
for every $t\ge T$ and any $\nu_1, \nu_2 \in \mathcal{P}r(\H^{\frac 12-})$, where 
\begin{align}
\wt d_\dl(\u_0,\u_1)=\sqrt{d_\dl(\u_0,\u_1)(1+e^{\ld \|\u_0 \|^2_{\H^{ 1/2-}}}+e^{\ld \|\u_1 \|^2_{\H^{ 1/2-}}})}.
\label{WASREFdis}
\end{align}

\noi
The contraction property
\eqref{STARCONTRAC} implies that the Markov process $\vec \Phi_t(\u_0,\xi)$ has the unique invariant measure. In particular, by taking $\nu_1=\rhoo$ and $\nu_2=\dl_{\u_0}$ in \eqref{STARCONTRAC}, we can see that the Markov transition probabilities $\Pt{t}^*\dl_{\u_0}$ converge exponentially fast to the Gibbs measure $\rhoo$.

\subsection{Contracting distances for the Markov semigroup}
\label{SUBSEC:contract}
In this subsection, we establish the contracting distances (Proposition \ref{PROP:contract}) for the Markov semigroup $\{\Pt{t} \}_{t \ge 0}$. 
The main ingredient is to use the following gradient estimate (see Lemma \ref{LEM:gra} and \ref{LEM:gra1} below) whose proof is based on the Malliavin calculus. \noi
For further use, we note that for any $\vec h \in \H^{\frac 12-}$, we denote by $J_{t}\vec h$ the derivative of $\vec \Phi_t(\u_0,\xi)=(\Phi_t(\u_0,\xi), \dt \Phi_t(\u_0,\xi)) $ with respect to the initial condition $\u_0 \in \H^{\frac 12-}$ along the direction $\vec h$. Then, $J_{t}\vec h$ satisfies the following equation
\begin{align}
\frac d{dt} \pi_1 J_{t}\vec h&=\pi_2 J_{t}\vec h \notag \\
\frac d{dt} \pi_2 J_{t} \vec h&=-(1-\Dl)\pi_1 J_{t}\vec h-\pi_2 J_{t}\vec h-\cos(\Phi_t(\u_0,\xi))\pi_1 J_{t}\vec h \notag \\
J_{0}\vec h&=\vec h
\label{pert2}
\end{align}

\noi
where $\pi_1(u_1,u_2)=u_1$ and $\pi_2(u_1,u_2)=u_2$. We denote by $A_{t} v$ the Malliavin derivative of $\vec \Phi_t(\u_0,\xi)=(\Phi_t(\u_0,\xi), \dt \Phi_t(\u_0,\xi))$ with respect to $\xi$ along the perturbation $ v$. Then, $A_{t} v$ satisfies the following equation
\begin{align}
\frac d{dt} \pi_1 A_{t} v&=\pi_2 A_{t} v \notag \\
\frac d{dt} \pi_2 A_{t} v&=-(1-\Dl)\pi_1 A_{t}  v-\pi_2 A_{t}  v-\cos(\Phi_t(\u_0,\xi))\pi_1 A_{t} v + v\notag \\
A_{0} v&=\vec 0.
\label{pert1}
\end{align}

\noi
In summary, $J_{t} \vec h$ is the effect on $\vec \Phi_t(\u_,\xi)$ of an infinitesimal perturbation of the initial condition in the direction $\vec h$ and $A_{t}v $ is the effect on $\vec \Phi_t(\u_0,\xi)$ of an infinitesimal perturbation of the noise in the direction $v$. We now prove the gradient estimates, which exhibits suitable large-time smoothing effect for the Markov process, so-called asymptotic
strong Feller property.

\begin{lemma}\label{LEM:gra}
There exist constants $C>0$ and $\g>0$ such that for every $F\in \Lip_{d_\dl}$, the Markov semigroup $\Pt{t}$ satisfies the following bound
\begin{align}
\| D\Pt{t}F(\u_0) \|_{\H^{\frac 12-} }  \le C \Big(\|F\|_{L^\infty}+e^{-\g t} \sqrt{(\Pt{t}\| DF \|_{\H^{\frac 12-}}^2) (\u_0)  }  \Big)
\label{gra3}
\end{align}

\noi
for every $\u_0 \in \H^{\frac 12-}$.
\end{lemma}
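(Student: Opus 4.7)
The plan is to apply a Malliavin calculus argument in the spirit of the asymptotic strong Feller framework of Hairer--Mattingly \cite{HM06}. After reducing to sufficiently smooth bounded $F$ by approximation, differentiating the Markov semigroup along a direction $\vec h \in \H^{\frac{1}{2}-}$ yields
\begin{align*}
D\Pt{t}F(\u_0)\cdot\vec h \,=\, \E\big[\langle DF(\vec \Phi_t(\u_0,\xi)), \, J_t \vec h\rangle\big],
\end{align*}
where $J_t \vec h$ is the linearization from \eqref{pert2}. The idea is to split $J_t \vec h = A_t v + \rho_t$ for a well-chosen adapted noise perturbation $v$, so that the $A_t v$ piece is handled by Malliavin integration by parts and produces the $\|F\|_{L^\infty}$ contribution, while the residual $\rho_t$ decays exponentially in $t$ and accounts for the $e^{-\g t}\sqrt{(\Pt{t}\|DF\|^2_{\H^{\frac12-}})(\u_0)}$ contribution.

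Comparing \eqref{pert1} and \eqref{pert2}, it is natural to try to kill the potential term $\cos(\Phi_s)\pi_1(\cdot)$ that appears on the right-hand side of both linearizations. Motivated by this, I choose
\begin{align*}
v(s) \,:=\, -\cos\big(\Phi_s(\u_0,\xi)\big)\, \pi_1 \vec S(s) \vec h,
\end{align*}
where $\vec S(s)$ is the linear damped wave propagator. A direct calculation comparing \eqref{pert1} and \eqref{pert2} then shows that $\rho_s := J_s \vec h - A_s v$ satisfies the \emph{free} damped linear wave equation
\begin{align*}
\dt^2 \pi_1 \rho + \dt \pi_1 \rho + (1-\dx^2)\pi_1 \rho = 0, \qquad \rho_0 = \vec h,
\end{align*}
so that $\rho_s = \vec S(s) \vec h$ is in fact deterministic. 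The linear estimate \eqref{linest} then gives $\|\rho_t\|_{\H^{\frac12-}}\le Ce^{-t/2}\|\vec h\|_{\H^{\frac12-}}$. Moreover, since $|\cos(\Phi_s)| \le 1$ and $\pi_1 \vec S(s) \vec h \in H^{\frac12-}(\T) \hookrightarrow L^2(\T)$, the process $v$ is adapted with
\begin{align*}
\E \int_0^t \|v(s)\|_{L^2}^2\, ds \,\le\, C\|\vec h\|_{\H^{\frac12-}}^2 \int_0^t e^{-s}\, ds \,\le\, C\|\vec h\|_{\H^{\frac12-}}^2.
\end{align*}

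With $v$ in hand, the Malliavin integration by parts formula gives $\E[\langle DF(\vec \Phi_t), A_t v\rangle] = \E[F(\vec \Phi_t)\, \delta(v)]$, where $\delta(v) = \int_0^t \langle v(s), dW(s)\rangle_{L^2}$ coincides (by adaptedness of $v$) with the It\^o integral; applying It\^o's isometry yields $|\E[F(\vec \Phi_t)\delta(v)]| \le C\|F\|_{L^\infty}\|\vec h\|_{\H^{\frac12-}}$. For the residual term, the determinism of $\rho_t$ and Cauchy--Schwarz in $\o$ give
\begin{align*}
\big|\E[\langle DF(\vec \Phi_t), \rho_t\rangle]\big| \,\le\, \|\rho_t\|_{\H^{\frac12-}}\sqrt{(\Pt{t}\|DF\|_{\H^{\frac12-}}^2)(\u_0)} \,\le\, Ce^{-t/2}\|\vec h\|_{\H^{\frac12-}}\sqrt{(\Pt{t}\|DF\|_{\H^{\frac12-}}^2)(\u_0)}.
\end{align*}
Taking the supremum over $\|\vec h\|_{\H^{\frac12-}}\le 1$ then yields \eqref{gra3} with $\g = 1/2$. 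I expect the main technical obstacle to be the rigorous justification of the Malliavin chain rule $D[F(\vec \Phi_t)] = DF(\vec \Phi_t)\circ A_t$: this requires showing that $\vec \Phi_t$ lies in the appropriate Malliavin Sobolev class with derivative process given by \eqref{pert1}, which in turn relies on the mild formulation \eqref{Pert} together with the pathwise $\H^1$-bounds on $\v(t)$ provided by Proposition \ref{PROP:GWP}, via a smooth truncation of the sine nonlinearity and a passage to the limit.
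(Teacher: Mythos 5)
Your proposal is correct and follows essentially the same route as the paper: the same splitting $J_t\vec h = A_t v + \rho_t$, the same control $v(s) = -\cos(\Phi_s(\u_0,\xi))\pi_1\rho_s$ (the paper writes it implicitly in terms of $\rho_s$, which, as you observe, forces $\rho_s = \vec S(s)\vec h$ and makes the two choices identical), the same exponential decay of the residual via \eqref{linest}, and the same It\^o-isometry bound on the Skorohod/It\^o integral of the adapted $v$. Your closing remark on justifying the Malliavin chain rule is a reasonable technical caveat that the paper does not elaborate on either.
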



\begin{proof}
By setting $\rhoo(t):=J_{0,t}\vec h-A_{0,t}\vec v$ and using \eqref{pert2}, \eqref{pert1}, and the Malliavin integration by part, we have
\begin{align}
\jb{D\Pt{t}F(\u_0),\vec h}_{\H^{\frac 12-}}&=\E\Big[ \jb{DF(\Phi_t(\u_0,\xi)), J_{0,t}\vec h }_{\H^{\frac 12-}} \Big] \notag \\
&=\E\Big[ \jb{DF(\Phi_t(\u_0,\xi)), \rhoo(t) }_{\H^{\frac 12-}} \Big]+\E\Big[ \jb{DF(\Phi_t(\u_0,\xi)), A_{t} v }_{\H^{\frac 12-}} \Big] \notag \\
&=\E\Big[ \jb{DF(\Phi_t(\u_0,\xi)), \rhoo(t) }_{\H^{\frac 12-}} \Big]+\E\Big[ F(\Phi_t(\u_0,\xi)) \int_0^t \jb{ v (s), d\xi(s)}_{H^{\frac 12-}} \Big],
\label{MalIP}
\end{align}

\noi
where the stochastic integral is interpreted in the Skorohod sense if $v$ is not adapted.
Notice that the residual error $\rhoo(t)$ satisfies the equation
\begin{align}
\frac d{dt} \pi_1 \rhoo(t)&=\pi_2 \rhoo(t) \notag \\
\frac d{dt} \pi_2 \rhoo(t)&=-(1-\Dl)\pi_1 \rhoo(t)-\pi_2 \rhoo(t)-\cos(\Phi_t(\u_0,\xi))\pi_1 \rhoo(t)- v  \notag \\
\rhoo(0)&=\vec h, 
\label{error}
\end{align}

\noi
which is a control problem, where $ v$ is the control to guarantee that
\begin{align*}
\E \bigg|\int_0^\infty \jb{ v (s), d\xi(s)}_{H^{\frac 12-}} \bigg|^2<\infty \qquad \text{and}  \qquad \E \Big[ \| \rhoo(t) \|_{\H^{\frac 12 -}}^2 \Big]\le e^{-c t}\| \vec h\|^2_{\H^{\frac 12- }}.
\end{align*} 

\noi
for some $c>0$. In terms of \eqref{error}, by choosing $v$ as follows
\begin{align*}
v(t)=-\cos(\Phi_t(\u_0,\xi) )\pi_1 \rhoo(t),
\end{align*}

\noi
which is adapted to the filtration $\{\F_t\}_{t\ge 0}$ induced by $\xi$, we obtain that
\begin{align}
 \| \rhoo(t) \|_{\H^\frac 12}^2 \les e^{-\frac t2} \|\vec h \|_{\H^{\frac 12-}}^2.
\label{conexp0}
\end{align}

\noi
Moreover, thanks to the It\^o isometry, we have
\begin{align}
\E \bigg|\int_0^t \jb{ v (s), d\xi(s)}_{H^{\frac 12-}} \bigg|^2&\le \int_0^t \E \| \pi_1 \rhoo(t) \|_{H^{\frac 12-}}^2 dt \notag \\
&\les \| h\|_{\H^{\frac 12-}}^2.
\label{Ito}
\end{align}

\noi
Therefore, by coming back to \eqref{MalIP} and combining \eqref{conexp0} and \eqref{Ito}, we obtain
\begin{align*}
\|D \Pt{t}F(\u_0) \|_{\H^{\frac 12-}}&\le \sup_{\| h\|_{\H^{\frac 12-} } \le 1 }\E \Big[ \| DF(\Phi_t(\u_0,\xi )) \|_{\H^{\frac 12-}}  \| \rho(t) \|_{\H^{\frac 12-}} \Big]\\
&\hphantom{XXX}+ \sup_{\| h\|_{\H^{\frac 12-} } \le 1 }\E\Bigg[\bigg| F(\Phi_t(\u_0,\xi)) \int_0^t \jb{\vec v (s), d\xi(s)}_{\H^{\frac 12-}} \bigg| \Bigg]\\
&\le \sqrt{(\Pt{t}\| DF \|_{\H^{\frac 12-}}^2) (\u_0)  }  \sup_{\| h\|_{\H^{\frac 12-} } \le 1 } \big(\E \| \rho(t) \|_{\H^{\frac 12-}}^2\big)^{\frac 12}\\
&\hphantom{XXX}+ \|F\|_{L^\infty}  \sup_{\| h\|_{\H^{\frac 12-} } \le 1 }  \E\Bigg[\bigg| \int_0^t \jb{\vec v (s), d\xi(s)}_{\H^{\frac 12-}} \bigg|^2 \Bigg]^{\frac 12}\\
&\le e^{-\frac t2}\sqrt{(\Pt{t}\| DF \|_{\H^{\frac 12-}}^2) (\u_0)  } +C\|F \|_{L^\infty},
\end{align*}

\noi
which completes the proof of Lemma \ref{LEM:gra}.

\end{proof}

\begin{lemma}\label{LEM:gra1}
There exist constants $C>0,\g>0$, and $\ld_0>0$ such that for every $F\in \Lip_{d_\dl}$, the Markov semigroup $\Pt{t}$ satisfies the following bound
\begin{align}
\| D\Pt{t}F(\u_0) \|_{\H^{\frac 12-} }  \le C \exp\big(\ld \|\u_0\|_{\H^{\frac 12-}}^2    \big) \Big(\|F\|_{L^\infty}+\dl^{-1} e^{-\g t}   \Big)
\label{gra4}
\end{align}

\noi
for every $0<\ld\le \ld_0$ and $\u_0 \in \H^{\frac 12-}$.
\end{lemma}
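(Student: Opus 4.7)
The plan is to deduce Lemma \ref{LEM:gra1} from Lemma \ref{LEM:gra} by eliminating the factor $\sqrt{(\Pt{t}\|DF\|_{\H^{1/2-}}^2)(\u_0)}$ on the right-hand side of \eqref{gra3} in favor of a deterministic exponential weight in $\|\u_0\|_{\H^{1/2-}}$. Two ingredients are required: an a priori pointwise bound on $\|DF\|_{\H^{1/2-}}$ extracted from the metric Lipschitz hypothesis $F \in \Lip_{d_\dl}$, and the exponential moment estimate \eqref{Lyapu0} provided by Lemma \ref{LEM:Lya}.

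The first step is to convert the metric Lipschitz condition into a pointwise gradient bound. For $\u \in \H^{\frac 12-}$ and $\vec h \in \H^{\frac 12-}$ with $\|\vec h\|_{\H^{\frac 12-}}=1$, using the straight-line path $\g(s) = \u + s\eps \vec h$ in the infimum defining $\varrho_\ld$ yields
\[
\varrho_\ld(\u, \u + \eps \vec h) \le \eps \int_0^1 e^{\ld \|\u + s\eps \vec h\|_{\H^{\frac 12-}}^2}\, ds,
\]
so that, for $\eps$ small enough that the truncation in \eqref{wedmet} is inactive,
\[
|F(\u + \eps \vec h) - F(\u)| \le d_\dl(\u, \u + \eps \vec h) \le \dl^{-1} \eps \int_0^1 e^{\ld \|\u + s\eps \vec h\|_{\H^{\frac 12-}}^2}\, ds.
\]
Dividing by $\eps$, letting $\eps\to 0^+$, and taking the supremum over unit $\vec h$ produces the pointwise bound $\|DF(\u)\|_{\H^{\frac 12-}} \le \dl^{-1} e^{\ld \|\u\|_{\H^{\frac 12-}}^2}$ at every $\u$ where $F$ is Fr\'echet differentiable. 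Squaring and integrating under $\Pt{t}$ gives
\[
(\Pt{t}\|DF\|_{\H^{\frac 12-}}^2)(\u_0) \le \dl^{-2}\, \E\big[e^{2\ld \|\vec \Phi_t(\u_0,\xi)\|_{\H^{\frac 12-}}^2}\big].
\]

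Next, I apply Lemma \ref{LEM:Lya} with parameter $2\ld$, which is why the $\ld_0$ of the present lemma must be taken at most half of the one supplied by Lemma \ref{LEM:Lya}. Together with $\sqrt{a+b}\le \sqrt a + \sqrt b$, this yields
\[
\sqrt{(\Pt{t}\|DF\|_{\H^{\frac 12-}}^2)(\u_0)} \le C \dl^{-1}\big(e^{-\g t/2} e^{\ld \|\u_0\|_{\H^{\frac 12-}}^2} + 1\big).
\]
Substituting into \eqref{gra3} and using $e^{\ld\|\u_0\|_{\H^{\frac 12-}}^2}\ge 1$ to absorb the $\|F\|_{L^\infty}$ term into the common exponential prefactor delivers \eqref{gra4} after relabeling $\g$. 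The only subtlety is the first step, since $F\in \Lip_{d_\dl}$ need not be Fr\'echet differentiable on all of $\H^{\frac 12-}$; this can be handled by a standard mollification argument (approximating $F$ by smooth cylindrical Lipschitz functions whose gradients satisfy the same pointwise bound), or equivalently by recasting the proof of Lemma \ref{LEM:gra} directly in difference-quotient form so that $F$ is never differentiated at all. Neither route requires ideas beyond what is already developed in the preceding subsection.
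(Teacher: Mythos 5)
Your proposal is correct and follows essentially the same route as the paper: derive the pointwise bound $\|DF(\u)\|_{\H^{1/2-}}\le \dl^{-1}e^{\ld\|\u\|_{\H^{1/2-}}^2}$ from the $d_\dl$-Lipschitz hypothesis via straight-line paths, control $(\Pt{t}\|DF\|^2)(\u_0)$ with the Lyapunov estimate of Lemma \ref{LEM:Lya}, and substitute into \eqref{gra3}. Your explicit handling of the $2\ld$ versus $\ld$ issue (halving $\ld_0$) and of the possible non-differentiability of $F$ are small refinements the paper leaves implicit.
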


\begin{proof}

We consider the term $\sqrt{(\Pt{t}\| DF \|_{\H^{\frac 12-}}^2) (\u_0)  }=\sqrt{ \E  \|DF(\Phi_t(\u_0,\xi)) \|_{\H^{\frac 12-}}^2   }$ in \eqref{gra3}. By choosing the specific path $\g^*$
\begin{align*}
\g^*(t)=t\u_1^0+(1-t)\u_2^0,
\end{align*}

\noi
we have that for any $F\in \Lip_{d_\dl}$
\begin{align*}
|F(\u_1^0)-F(\u_2^0)| \le d_{\dl}(\u_1^0, \u_2^0)&\le \frac 1\dl \varrho_{\ld}(\u_1^0,\u_2^0)\\
&\le  \frac 1\dl \int_0^1 \exp\big(\ld \| \g^*(t) \|_{\H^{\frac 12-}}^2 \big) \| \dot \g^*(t) \|_{\H^{\frac 12-}} dt,
\end{align*}

\noi
which implies
\begin{align}
\|DF(\u_0) \|_{\H^{\frac 12-}}= \sup_{ \|h\|_{\H^{\frac 12-}}  \le 1 }   \big| \jb{ DF(\u_0),h  }_{\H^{\frac 12-}} \big|\le \frac 1\dl \exp\big(\ld \| \u_0 \|_{\H^{\frac 12-}}^2 \big)
\label{grah}
\end{align}

\noi
Hence, thanks to \eqref{grah} and Lemma \ref{LEM:Lya}, we have
\begin{align}
\E \Big[  \|DF(\Phi_t(\u_0,\xi)) \|_{\H^{\frac 12-}}^2  \Big] 
&\le \dl^{-2}\E\Big[ \exp\big(\ld \| \Phi_t(\u_0,\xi) \|_{\H^{\frac 12-}}^2 \big)   \Big]  \notag \\
& \le  \dl^{-2} \Big( C  \exp\big(\ld \|\u_0\|_{\H^{\frac 12-}}^2 \big) +K \Big)
\label{gradrho}
\end{align}

\noi
for some $0<\ld \ll 1$. By putting \eqref{gradrho} into \eqref{gra3}, we obtain
\begin{align*}
\| D\Pt{t}F(\u_0) \|_{\H^{\frac 12-} }  &\le C \Big(\|F\|_{L^\infty}+e^{-\g t} \sqrt{(\Pt{t}\| DF \|_{\H^{\frac 12-}}^2) (\u_0)  }  \Big)\\
&\le  C \exp\big(\ld \| \u_0\|_{\H^{\frac 12-}}^2    \big)\Big(\|F\|_{L^\infty}+\dl^{-1} e^{-\g t}   \Big),
\end{align*}

\noi
which completes the proof of \eqref{gra4}.
\end{proof}

Now, we are ready to prove that the distance $\W_{d_\dl}$ is contracting for the Markov semigroup $\Pt{t}$ (namely, Proposition \ref{PROP:contract}): there exist $\dl>0$ and $t^*>0$ such that 
\begin{align*}
\W_{d_\dl}\big(\Pt{t}^*\dl_{\u_1^0}, \Pt{t}^*\dl_{\u_2^0} \big) \le \frac 12 d_\dl(\u_1^0, \u_2^0)
\end{align*}

\noi
for every pair $\u_1^0, \u_2^0 \in \H^{\frac 12-}(\T)$ with  $d_\dl(\u_1^0, \u_2^0)<1$ and $t \ge t^*$.

\begin{proof}[Proof of Proposition \ref{PROP:contract}]
Thanks to the Monge-Kantorovich duality in \eqref{MGDU1}, it suffices to prove that there exists $t^*>0$ and $\dl>0$ such that the bound
\begin{align}
\W_{d_\dl}\big(\Pt{t}^*\dl_{\u_1^0}, \Pt{t}^*\dl_{\u_2^0} \big)&=\sup_{\substack{[F]_{\Lip_{d_\dl}} \le 1\\ \| F\|_{L^\infty} \le 1 }} 
\bigg(\int F(y) d\Pt{t}^*\dl_{\u_1^0}(y)-\int F(y) d\Pt{t}^*\dl_{\u_2^0}(y) \bigg) \le \frac 1{2\dl} \varrho_\ld(\u_1^0,\u_2^0)
\label{cont}
\end{align}


\noi
holds for every $t\ge t^*$ when $d_\dl(\u_1^0, \u_2^0)<1$. From Lemma \ref{LEM:gra1}, we have that for every $\u_0,  h\in \H^{\frac 12-}$,
\begin{align*}
\big|\jb{D\Pt{t}F(\u_0), h }_{\H^{\frac 12-}} \big| \le C\exp\big(\ld  \|\u_0\|_{\H^{\frac 12-}}^2 \big)  \big(\|F\|_{L^\infty}+\dl^{-1}e^{-\g t}  \big)\| h \|_{\H^{\frac 12-}}.
\end{align*}

\noi
By choosing $\dl \ll 1$ and $t\ge t^*\gg 1$, we have
\begin{align}
\big|\jb{D\Pt{t}F(\u_0), h }_{\H^{\frac 12-}} \big|&\le C\dl^{-1}(e^{-\g t}+\dl) \exp\big(\ld \|\u_0\|_{\H^{\frac 12-}}^2 \big) \| h\|_{\H^{\frac 12-}} \notag \\
&\le \frac 12 \dl^{-1}  \exp\big(\ld \|\u_0\|_{\H^{\frac 12-}}^2 \big)  \| h\|_{\H^{\frac 12-}}.
\label{gra2}
\end{align}

\noi
For any Lipschitz continuous paths $\g: [0,1] \to \H^{\frac 12-}(\T)$ connecting $\u_1^0$ to $\u_2^0$, it follows from \eqref{gra2} that we have
\begin{align}
\Pt{t}F(\u_1^0)- \Pt{t}F(\u_2^0)&=\int_0^1  \jb{D\Pt{t}F(\g(t)), \dot\g(t)  }_{\H^{\frac 12-}} dt \notag \\
&\le \frac 1{2\dl}\int_0^1 \exp\big(\ld \|\g(t)\|_{\H^{\frac 12-}}^2 \big) \|\dot \g(t) \|_{\H^{\frac 12-} } dt.
\label{diffgra}
\end{align}

\noi
Since the above estimate \eqref{diffgra} holds for any Lipschitz continuous paths connecting $\u_1^0$ to $\u_2^0$, we have
\begin{align*}
|\Pt{t}F(\u_1^0)- \Pt{t}F(\u_2^0) |\le \frac 1{2\dl} \inf_{\gamma: \u_1^0\to \u_2^0} \int_0^1 e^{\ld \| \g(t)\|_{\H^{1/2-}}^2  } \| \dot \g(t) \|_{\H^{\frac 12-}} dt = \frac 1{2\dl} \varrho_\ld(\u_1^0,\u_2^0). 
\end{align*}

\noi
This completes the proof of the required bound \eqref{cont}.

\end{proof}


\subsection{On irreducibility conditions and the small sets}
In this subsection, we prove that every bounded set $B_R$ in \eqref{BR} is $d_\dl$-small for the Markov semigroup $\Pt{t}$. First, we state the following proposition which shows that the solution $\vec \Phi_t(\u_0,\xi)$ has always a chance to be able to visit any small ball centered at the origin.

\begin{lemma}\label{LEM:visit}
Let $R>0$ and $r>0$. Then, there exists $T^*=T^*(R,r)$ sufficiently large such that for every $t \ge T^*$, we have
\begin{align*}
\inf_{\u_0 \in B_R} \PP\Big\{ \| \vec \Phi_t(\u_0,\xi) \|_{\H^{\frac 12-}} <r      \Big\}=c>0
\end{align*}

\noi
for some $c=c(t,r)$, where 
\begin{align}
B_R=\Big\{\u_0 \in \H^{\frac 12-}: \| \u_0 \|_{\H^{\frac 12-} }\le R \Big\}.
\label{BR}
\end{align}

\end{lemma}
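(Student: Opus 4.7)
The plan is to use a Girsanov change of measure exploiting the boundedness of the sine nonlinearity, which reduces the problem to an irreducibility statement for the linear damped stochastic wave equation \eqref{SDLW}. Set $\eta(s,x) := (\g/\sqrt{2})\sin(\be u(s,x))$, where $u$ is the SdSG solution; this is an adapted process with $\|\eta(s)\|_{L^2(\T)}^2 \le |\g|^2 \pi$ pathwise. Novikov's criterion is then trivial, so defining
\[ \frac{d\wt\PP}{d\PP}\Big|_{\F_t} = \exp\Bigl(\int_0^t \langle \eta, d\xi\rangle_{L^2} - \tfrac12 \int_0^t \|\eta(s)\|_{L^2}^2 ds\Bigr) \]
produces a probability measure $\wt\PP$ under which $\wt\xi := \xi - \eta$ is a space-time white noise on $[0,t]\times\T$ and $u$ satisfies the linear equation $\dt^2 u + \dt u + (1-\dx^2)u = \sqrt{2}\wt\xi$. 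Hence under $\wt\PP$ one has $\vec\Phi_t(\u_0,\xi) = \vec S(t)\u_0 + \vec\stick_t(\wt\xi)$.

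Cauchy-Schwarz then produces the reverse bound $\PP(A) \ge \wt\PP(A)^2 / \E^\PP[(d\wt\PP/d\PP)^2]$ for every $A \in \F_t$, and writing $(d\wt\PP/d\PP)^2$ as the exponential martingale $\exp(2\int \eta\, d\xi - 2\int\|\eta\|_{L^2}^2 ds)$ times the deterministically bounded factor $\exp(\int\|\eta\|_{L^2}^2 ds)$ yields $\E^\PP[(d\wt\PP/d\PP)^2] \le e^{Ct}$. Taking $A = \{\|\vec\Phi_t(\u_0,\xi)\|_{\H^{\frac 12-}} < r\}$ and choosing $T^* = T^*(R,r)$ large enough that the linear decay estimate \eqref{linest} guarantees $\|\vec S(t)\u_0\|_{\H^{\frac 12-}} \le r/2$ uniformly for $\u_0 \in B_R$ and $t \ge T^*$, the problem reduces to showing
\[ \wt\PP\bigl\{\|\vec\stick_t(\wt\xi)\|_{\H^{\frac 12-}} < r/2\bigr\} > 0, \]
where the probability is now independent of $\u_0$ and concerns only a centered Gaussian vector in $\H^{\frac 12-}(\T)$ with covariance computable from \eqref{W2}--\eqref{lin1} via It\^o isometry.

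The main obstacle I anticipate is establishing that this Gaussian ball probability is strictly positive and bounded below uniformly for $t \ge T^*$. This reduces to checking that the covariance operator of $\vec\stick_t$ has dense range in $\H^{\frac 12-}(\T)$, which holds because the space-time white noise excites every Fourier mode (so the associated Cameron-Martin space is dense); non-degeneracy persists in the limit $t\to\infty$ since the law converges to the Gaussian marginal of $\muu$ (cf.~Corollary \ref{COR:InvarGauss}), and weak convergence together with lower semicontinuity of the measure on open sets then yields the required uniform lower bound. Combining the three ingredients gives
\[ \PP\bigl\{\|\vec\Phi_t(\u_0,\xi)\|_{\H^{\frac 12-}} < r\bigr\} \ge c(t,r)\, e^{-Ct} > 0 \]
uniformly in $\u_0 \in B_R$, which is the claim. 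An alternative route would proceed via a direct support-theorem argument constructing an explicit deterministic control driving the solution into the ball, but the Girsanov argument above is cleaner since the nonlinearity's boundedness makes the drift and the quadratic variation of the density estimate trivial.
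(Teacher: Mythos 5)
Your proposal is correct and follows essentially the same route as the paper: a Girsanov change of measure absorbing the bounded sine nonlinearity, reduction to the linear flow $\vec L_t(\u_0,\xi)=\vec S(t)\u_0+\vec\stick_t(\xi)$, the exponential decay \eqref{linest} to kill the initial data uniformly over $B_R$, and full support of the non-degenerate Gaussian law of $\vec \stick_t$. The only difference is the technical step controlling the density: you use Cauchy--Schwarz with the second-moment bound $\E^{\PP}[(d\wt\PP/d\PP)^2]\le e^{Ct}$, whereas the paper bounds $\PP\{\EE(t)\le C\}$ from above via Markov's inequality and It\^o isometry; both are valid and yield a $t$-dependent constant $c(t,r)>0$, which is all the lemma requires.
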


We present the proof of Lemma \ref{LEM:visit} at the end of this subsection. We first prove that $B_R$ is $d_\dl$-small set by assuming Lemma \ref{LEM:visit}.
\begin{proof}[Proof of Proposition \ref{PROP:dsmall}]
Let $\u_1^0, \u_2^0 \in B_R$. By setting $ B_\eps:=\{\u_0\in \H^{\frac 12-}: \|\u_0\|_{\H^{\frac 12-}} \le \eps \}$, we note that
\begin{align*}
\W_{d_\dl}\big(\Pt{t}^*\dl_{\u_1^0}, \Pt{t}^*\dl_{\u_2^0} \big)&=\inf_{\pi } \iintt_{\H^{\frac 12-}\times \H^{\frac 12-}}d_\dl(\vec {\bf u},\vec {\bf v}) \pi(d\vec {\bf u} , d\vec {\bf v} )\\
& \le \E\big[d_\dl(X,Y) \big]\\
& \le \E \Big[\big( 1 \wedge \dl^{-1}\varrho_\ld(X,Y) \big)\ind_{ \{ X\in B_\eps, Y \in B_\eps \} } \Big] +\E \big[\ind_{ \{ X\in B_\eps, Y \in B_\eps \}^c } \big], 
\end{align*}

\noi
where the infimum runs over all $\pi \in \mathcal{C}(\Pt{t}^*\dl_{\u_1^0}, \Pt{t}^*\dl_{\u_2^0} )$ and in the second step we choose the coupling $\pi$ such that $X$ and $Y$ are independent with $\Law_\PP(X)=\Pt{t}^*\dl_{\u_1^0}$ and $\Law_\PP(Y)=\Pt{t}^*\dl_{\u_2^0}$, respectively. By choosing the path $\g^*(t)=tX+(1-t)Y$, we have
\begin{align*}
\varrho_\ld(X,Y)&=\inf_{\gamma:X\to Y} \int_0^1 \exp\Big(\ld \|\g(t)\|_{\H^{\frac 12-}}^2 \Big) \| \dot \g(t) \|_{\H^{\frac 12-}} dt\\
&\le \exp\big(2 \ld (\| X \|^2_{\H^{1/2-}}+ \| Y \|^2_{\H^{1/2-}})  \big)\|X-Y \|_{\H^{\frac 12-}}.
\end{align*}

\noi
By choosing $\eps=\eps(\dl)$ sufficiently small, we have
\begin{align*}
\E \Big[\big( 1 \wedge \dl^{-1}\varrho_\ld(X,Y) \big)\ind_{ \{ X\in B_\eps, Y \in B_\eps \} } \Big] &\le 2\dl^{-1}e^{4\ld \eps^2}\eps \PP\big\{X\in B_\eps, Y\in B_\eps   \big\}\\
&\le \frac 12 \PP\big\{X\in B_\eps, Y\in B_\eps  \big\}.
\end{align*}

\noi
Therefore, we have that
\begin{align}
\W_{d_\dl}\big(\Pt{t}^*\dl_{\u_1^0}, \Pt{t}^*\dl_{\u_2^0} \big)&\le 1-\frac 12 \PP\big\{X\in B_\eps, Y\in B_\eps   \big\}   \notag \\
&=1-\frac 12 \PP\big\{X\in B_\eps \big \} \PP\big\{Y\in B_\eps \big \},
\label{indep}
\end{align}

\noi
where we used the fact that the coupling was chosen such that $X$ and $Y$ are independent.
It follows from Lemma \ref{LEM:visit} that there exists $T^*=T^*(R,\eps)>0$ such that for every $t \ge T^*$, we have
\begin{align}
\inf_{\u_0\in B_R}\PP\Big\{  \|\vec \Phi_t(\u_0,\xi)  \|_{\H^{\frac 12-}} \le \eps \Big\}>0
\label{visit1}
\end{align}

\noi
Hence, from \eqref{indep} and \eqref{visit1}, we obtain that
\begin{align*}
\sup_{\u_1^0, \u_2^0 \in B_R}\W_{d_\dl}\big(\Pt{t}^*\dl_{\u_1^0}, \Pt{t}^*\dl_{\u_2^0} \big) <1
\end{align*}

\noi
for any fixed $t \ge T^*$. This completes the proof of Proposition \ref{PROP:dsmall}.

\end{proof}

Now we present the proof of Lemma \ref{LEM:visit}. We recall the stochastic linear damped wave equation
\begin{align}
\partial_t \begin{pmatrix} u \\ \dt u \end{pmatrix}=
-\begin{pmatrix}0 & -1 \\ 1-\dx^2 & 1\end{pmatrix} \begin{pmatrix}u \\ \dt u \end{pmatrix} + \begin{pmatrix} 0 \\ {\sqrt2 \xi} \end{pmatrix}.
\label{error3}
\end{align}

\noi
We denote by $\vec L_t(\u_0,\xi)$ the flow of linear equation \eqref{error3}
\begin{align}
L_t(\u_0,\xi)&=S(t)\u_0+\stick_{t}(\xi) \notag \\
\vec L_t(\u_0,\xi)&=( L_t(\u_0,\xi), \dt  L_t(\u_0,\xi)).
\label{URISLIN}
\end{align}

\noi
Before we present the proof of Lemma \ref{LEM:visit}, we state the following lemma which corresponds to Lemma \ref{LEM:visit} for the linear flow \eqref{error3}. 
\begin{lemma}\label{LEM:visit1}
Let $R>0$ and $r>0$. Then, there exists $T^*=T^*(R,r)$ sufficiently large such that for any fixed $t \ge T^*$, we have
\begin{align*}
\inf_{\u_0 \in B_R} \PP\Big\{ \| \vec L_t(\u_0,\xi) \|_{\H^{\frac 12-}}  <r      \Big\}\ge c>0
\end{align*}

\noi
for some constant $c=c(t,r)$.
\end{lemma}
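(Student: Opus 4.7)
The plan is to decompose the linear flow into its deterministic part and its stochastic convolution part via \eqref{URISLIN}, namely
\begin{align*}
\vec L_t(\u_0,\xi) = \vec S(t)\u_0 + \vec{\stick}_t(\xi),
\end{align*}
and then show that the first summand is deterministically small for large $t$ (uniformly in $\u_0\in B_R$), while the second summand lies in a ball of small radius with probability bounded below uniformly in $t$.

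First I would use the exponential dissipation estimate \eqref{linest} for the linear semigroup: there is a constant $C_0>0$ such that
\begin{align*}
\|\vec S(t)\u_0\|_{\H^{1/2-}} \le C_0 e^{-t/2} \|\u_0\|_{\H^{1/2-}} \le C_0 R\, e^{-t/2}
\end{align*}
for every $\u_0\in B_R$. Choosing $T_1=T_1(R,r)$ large enough that $C_0 R e^{-t/2}<r/2$ for $t\ge T_1$ gives the deterministic contribution uniformly in $\u_0\in B_R$.

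Next I would control the stochastic convolution $\vec{\stick}_t(\xi)$. This is a centered Gaussian random variable in $\H^{1/2-}(\T)$ whose covariance is given explicitly through the Duhamel formula \eqref{lin1} and \eqref{CW}. A direct computation (mode by mode) shows that, as $t\to\infty$, the distribution $\Law_\PP(\vec{\stick}_t(\xi))$ converges weakly in $\H^{1/2-}$ to the Gaussian measure $\muu=\mu\otimes\mu_0$ from Corollary \ref{COR:InvarGauss}; this is the content of the invariance of $\muu$ under \eqref{SDLW}. Since $\muu$ is a centered non-degenerate Gaussian measure on $\H^{1/2-}(\T)$, and the open ball $B_{r/2}=\{\vec w\in\H^{1/2-}:\|\vec w\|_{\H^{1/2-}}<r/2\}$ contains $0\in\supp(\muu)$, we have $\muu(B_{r/2})>0$. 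Combining weak convergence with the Portmanteau theorem applied to the open set $B_{r/2}$ yields
\begin{align*}
\liminf_{t\to\infty}\PP\Big\{\|\vec{\stick}_t(\xi)\|_{\H^{1/2-}}<r/2\Big\} \ge \muu(B_{r/2})>0,
\end{align*}
so there exists $T_2=T_2(r)$ and $c_0=c_0(r)>0$ such that the probability on the left is at least $c_0$ for every $t\ge T_2$.

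Setting $T^*=\max(T_1,T_2)$ and combining the two steps via the triangle inequality,
\begin{align*}
\|\vec L_t(\u_0,\xi)\|_{\H^{1/2-}}\le \|\vec S(t)\u_0\|_{\H^{1/2-}}+\|\vec{\stick}_t(\xi)\|_{\H^{1/2-}}<\tfrac{r}{2}+\tfrac{r}{2}=r
\end{align*}
on the event $\{\|\vec{\stick}_t(\xi)\|_{\H^{1/2-}}<r/2\}$, which has probability at least $c_0$ uniformly in $\u_0\in B_R$ for $t\ge T^*$. The main technical point, and the only step requiring care, is the non-degeneracy / full-support statement for the Gaussian law of $\vec{\stick}_t(\xi)$: one should verify this directly from the Fourier representation of the stochastic convolution, noting that every Fourier mode is forced non-trivially by $W$ and that the resulting Gaussian process is equivalent to $\muu$ for each fixed $t>0$ large enough (or argue via the weak limit as above). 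Everything else is the soft combination of semigroup decay and a Gaussian small-ball estimate.
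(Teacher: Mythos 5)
Your proposal is correct and follows essentially the same route as the paper: decompose $\vec L_t(\u_0,\xi)=\vec S(t)\u_0+\vec\stick_t(\xi)$, kill the deterministic part uniformly on $B_R$ via the exponential decay \eqref{linest}, and reduce to a Gaussian small-ball bound for $\vec\stick_t(\xi)$. The only difference is in the last step, where the paper simply invokes that the law of $\vec\stick_t(\xi)$ is a non-degenerate Gaussian with full support in $\H^{\frac12-}$ at each fixed $t$ (which already gives $c=c(t,r)>0$ as stated), whereas you additionally prove a uniform-in-$t$ lower bound via weak convergence of $\Law(\vec\stick_t(\xi))$ to $\muu$ and the Portmanteau theorem --- slightly more work than needed, but correct.
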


\begin{proof}[Proof of Lemma \ref{LEM:visit1}]
In the absence of the noise $\xi$ and nonlinear term, the deterministic linear flow is forced to go to zero exponentially fast. More precisely, for any $s\in \R$, we have
\begin{align*}
\|\vec S(t)  \u_0 \|_{\H^{s}} \le e^{-\frac t2}\| \u_0\|_{\H^{s}}.
\end{align*}

\noi
Consequently, we have $ \|\vec  L_t(\u_0,\xi) \|_{\H^{\frac 12-}}  \le e^{-\frac t2}\| \u_0\|_{\H^{\frac 12-}}+\|\vec \stick_{t}(\xi) \|_{\H^{\frac 12-}}$, which implies that by taking $T^*=T^*(R,r)>0$,
\begin{align*}
\inf_{\| \u_0\|_{\H^{\frac 12-}} \le R}\PP\Big\{ \| \vec L_t(\u_0,\xi) \|_{\H^{\frac 12-}}  \le r   \Big\} \ge \PP\Big\{  \|\vec \stick_{t}(\xi) \|_{\H^{\frac 12-}} \le \frac r2   \Big \}
\end{align*}

\noi
for any $t\ge T^*$, where $\vec \stick_{t}(\xi)=(\stick_{t}(\xi), \dt \stick_{t}(\xi))$. Since the law of $\vec \stick_{t}(\xi) $ is (non-degenerate) Gaussian, its law is full\footnote{Namely, the support of $\stick_{t}(\xi)$ is the whole of $\H^{\frac 12-}$.} in $\H^{\frac 12-}$. Therefore, we prove Lemma \ref{LEM:visit1}.

\end{proof}

We conclude this subsection by presenting the proof of Lemma \ref{LEM:visit}.

\begin{proof}[Proof of Lemma \ref{LEM:visit}]
We  define the measure $\mathbb{Q}$ whose Radon-Nikodym derivative 
with respect to $\PP$ is given by the following stochastic exponential:
\begin{align*}
\frac{d\mathbb{Q}}{d\PP}=\exp\bigg\{-\frac 12 \int_0^t \| h(t')\|_{L^2_x}^2 dt'+\int_0^t \jb{h(t'),d\xi(t')}_{L^2_x} \bigg \}=\EE(h(t))
\end{align*}

\noi
where $h(t):=\frac 1{\sqrt{2}}\sin\big(\Phi_t(\u_0,\xi)  \big)$. Then, the process $h(t)$ is adapted to the filtration $\{\F_t\}_{t\ge 0}$ induced by $\xi$
and satisfies the Novikov condition
\begin{align*}
\E\bigg[\exp\bigg\{ {\frac 12\int_0^t \|h(t') \|_{L^2_x}^2 dt'  }\bigg\} \bigg]\le e^{\frac t4}<\infty
\end{align*}

\noi
for any fixed $t \ge 0$, which implies that $d\mathbb{Q}=\EE(h(t))d\PP$ is a probability measure. Thanks to the Girsanov theorem, we have
\begin{align*}
\Law_\PP(\vec \Phi_t(\u_0,\xi))=\Law_\mathbb{Q}(\vec L(t)\u_0),
\end{align*}

\noi
where $\vec L(t)\u_0=\vec S(t)\u_0+\vec \stick_{t}(\xi)$. Hence, we have
\begin{align}
\PP\Big\{  \| \vec \Phi_t(\u_0,\xi) \|_{\H^{\frac 12-}}  <r   \Big\}&=\E\bigg[\EE(t)\ind_{ \big\{ \| \vec L_t(\u_0,\xi) \|_{\H^{\frac 12-}}  <r    \big\} }   \bigg]   \notag \\
&\ge C\PP\Big\{ \EE(t) >C, \; \| \vec L_t(\u_0,\xi) \|_{\H^{\frac 12-}}  <r\Big\}.
\label{cut1}
\end{align}

\noi
Note that 
\begin{align}
\PP\Big\{ \| \vec L_t(\u_0,\xi) \|_{\H^{\frac 12-}}  <r    \Big\}
&\le  \PP\Big\{  \EE(t)>C, \;  \| \vec L_t(\u_0,\xi) \|_{\H^{\frac 12-}}  <r \Big\}+\PP\big \{ \EE(t) \le C \big \}.
\label{cut2}
\end{align}

\noi
By combining \eqref{cut1} and \eqref{cut2}, we obtain
\begin{align*}
\PP\Big\{ \| \vec \Phi_t(\u_0,\xi) \|_{\H^{\frac 12-}}  <r   \Big\} \ge  C\PP\Big\{  \| \vec  L_t(\u_0,\xi) \|_{\H^{\frac 12-}}  <r    \Big\} - C\PP\big\{ \EE(t) \le C \big\},
\end{align*}

\noi
which implies that
\begin{align}
\inf_{\u_0\in B_R}\PP\Big\{ \| \vec \Phi_t(\u_0,\xi) \|_{\H^{\frac 12-}}  <r   \Big\} &\ge  C\inf_{\u_0 \in B_R}\PP\Big\{  \|\vec  L_t(\u_0,\xi) \|_{\H^{\frac 12-}}  <r    \Big\}  - C\sup_{\u_0 \in B_R}\PP\big\{ \EE(t) \le C \big\}.
\label{thrprob}
\end{align}

\noi
Thanks to Lemma \ref{LEM:visit}, the first term in \eqref{thrprob} can be bounded as follows: there exists $T^*(R,r)>0$ such that
\begin{align}
\inf_{\u_0 \in B_R}\PP\Big\{  \| \vec L_t(\u_0,\xi) \|_{\H^{\frac 12-}}  <r    \Big\} \ge \eta>0
\label{visit0}
\end{align} 

\noi
for any fixed $t \ge T^*$ and some $\eta>0$. Therefore, for any fixed $t\ge T^*$, it suffices to prove that by taking $C$ sufficiently small,
\begin{align}
\sup_{\u_0\in B_R}\PP\big\{ \EE(t) \le C \big\} < \eps
\label{twoprob}
\end{align}

\noi
for some $0<\eps \ll 1$ with $0<\eps \ll \eta$. From Markov inequality and It\^o isometry, we have
\begin{align*}
\PP\{ \EE(t) \le C \}&=\PP\bigg\{ \frac 12 \int_0^t \| h(t')\|_{L^2_x}^2 dt'-\int_0^t \jb{h(t'),d\xi(t')}_{L^2_x} \ge \log(C^{-1})  \bigg\}\\
&\le \frac 1{\log(C^{-1})} \Bigg\{   \E \bigg[\frac 12\int_0^t \| h(t')\|_{L^2_x}^2 dt' \bigg]+ \E \bigg[ \Big| \int_0^t \jb{h(t'),d\xi(t')}_{L^2_x} \Big| \bigg]     \Bigg\}\\
& \le \frac 1{\log(C^{-1})}\max\{ {t,t^{\frac 12}} \}
\end{align*}

\noi
Notice that the constant in the above estimate does not depend on $\u_0 \in B_R$. 
Hence, by taking $0<C=C(t)\ll 1$, we have
\begin{align}
\sup_{\u_0\in B_R}\PP\big\{ \EE(t) \le C \big\} < \eps.
\label{epsprob}
\end{align}

\noi
By combining \eqref{thrprob}, \eqref{visit0}, and \eqref{twoprob}, we obtain
\begin{align*}
\inf_{\u_0\in B_R}\PP\Big\{  \| \vec \Phi_t(\u_0,\xi) \|_{\H^{\frac 12-}}  <r   \Big\} \ge C(\eta-\eps)>0
\end{align*}

\noi
for any fixed $t\ge T^*$. This completes the proof of Lemma \ref{LEM:visit}.

\end{proof}

\subsection{A Lyapunov function for the Markov semigroup}
\label{SUBSEC:Lya}
We prove the existence of a Lyapunov function for the system \eqref{SdSG}. In contrast to parabolic equations, damped nonlinear wave equations do not have a strong dissipation, which raises uncertainty about the a priori existence of a Lyapunov function.
For example, by taking the linear flow $\vec L_t(\u_0,\xi)$ in \eqref{URISLIN}, we have   
\begin{align}
\frac{d}{dt}  V(\vec L_t(\u_0,\xi) )=-\frac 12 \|  \dt L_t(\u_0,\xi)(t)    \|_{H^{-\frac 12-}}^2
\label{VV1}
\end{align}

\noi
where $V(\u)=\frac 12\| \u\|_{\H^{\frac 12-}}^2$ and $\u=(u,v)$. This shows the absence\footnote{Namely, \eqref{VV1} shows only the damping effect in the momentum direction.} of the smoothing effect in the first component of $\u=(u,v)$. In the following, we exploit a modified energy technique to exhibit a hidden smoothing effect in the  first component of $\u$ (see \eqref{gen} and Remark \ref{REM:MODEN} below). 

\begin{proof}[Proof of Lemma \ref{LEM:Lya}]
We define
\begin{align*}
V_{\text{mod}}(\u)=V(\u)+M(\u) 
\end{align*}

\noi
where $V(\u)=\frac 12\| \u\|_{\H^{\frac 12-}}^2$ is as in \eqref{VV1} and $M(\u)$ is the energy correction term as follows:
\begin{align*}
M(\u)=\frac 14\| u\|_{H^{-\frac 12-}}^2+\frac 12 \jb{u, \dt u}_{H^{-\frac 12-}}.
\end{align*}

\noi
By Cauchy-Schwarz and Young's inequality, we have
\begin{align}
\frac 12 V(\u) \le  V_{\text{mod}}(\u) \le 2 V(\u).
\label{equiv}
\end{align}

\noi
Namely, the modified energy $V_{\text{mod}}(\u)$ is comparable with the original energy $V(\u)$. Hence, it suffices to show \eqref{Lyapu0} with $ V_{\text{mod}}(\u)$.
Let us recall the infinitesimal generator of \eqref{LowSdSG} in terms of the coordinates $a_n=\ft u(n)$ and $b_n=\ft v(n)$ (see \cite[Section 8]{CEGLA} or \cite[(2.6)]{Ngu}):
\begin{align}
&\L_Nf(a_0,...,a_{N},b_0,...,b_N) \notag \\
&= \sum_{n=0}^{N}b_n\partial_{a_n}f-\jb{n}^2a_n\partial_{b_n}f - b_n\partial_{b_n}f+\partial_{b_n}^2f- \F\bigg\{\sin\Big(\sum_{m=0}^N a_me^{imx}\Big) \bigg\}_n \partial_{b_n}f 
\label{gen0}
\end{align}

\noi
where $\F$ is the Fourier transform. Let $\ld >0$. We define 
\begin{align}
G_\ld(\u)=\exp\big( \ld  V_{\text{mod}}(\u)  \big).
\label{exp0}
\end{align}

\noi
Then, we have
\begin{align}
\L_N G_\ld(\P_N \u)=\ld G_\ld( \P_N \u ) \L_N  V_{\text{mod}}(\P_N \u)+\ld^2 G_\ld(\P_N \u) \sum_{n=0}^N |\partial_{b_n}    V_{\text{mod}}(\P_N \u) |^2. 
\label{gen00}
\end{align}

\noi
Regarding the first term in \eqref{gen00}, from Cauchy-Schwarz and Young's inequality, we obtain
\begin{align}
&\L_N  V_{\text{mod}}(\P_N\u) \notag \\
&=-\frac 12\sum_{n=0}^{N} \big( \jb{n}^{1-}|a_n|^2+ \jb{n}^{-1-}|b_n|^2-2\jb{n}^{-1-}\big)-\sum_{n=0}^N \big(\jb{n}^{-1-}(b_n+ {a_n}2^{-1}) \big) \F\bigg\{\sin\Big(\sum_{m=0}^N a_me^{imx}\Big) \bigg\}_n \notag \\
&\le-\frac 14\| \P_N u\|_{H^{\frac 12-}}^2 -\frac 14\| \P_N v \|_{H^{-\frac 12-}}^2+C
\label{gen}
\end{align}

\noi
where $\u=(u,v)$ and $C$ is a positive constant. As for the second term in \eqref{gen00}, note that 
\begin{align}
\sum_{n=0}^N |\partial_{b_n}    V_{\text{mod}}(\P_N \u)|^2 = \sum_{n=0}^N  \Big| \jb{n}^{-1-} (b_n+2^{-1} a_n ) \Big|^2 \le \frac 12\| \P_N u \|_{H^{-1-}}^2+ 2\| \P_N v \|_{H^{ -1-}}^2
\label{bn}
\end{align}

\noi
By combining \eqref{gen00}, \eqref{gen}, \eqref{bn}, and \eqref{equiv}, we have
\begin{align}
&\L_NG_\ld( \P_N \vec \Phi_t^N(\u_0,\xi) )\notag \\
&\le-\frac \ld4  G_\ld(\P_N\vec \Phi_t^N(\u_0,\xi) )\big( \| \pi_1  \P_N \vec \Phi_t^N(\u_0,\xi) \|_{H^{\frac 12-}}^2 +\| \pi_2 \P_N  \vec \Phi_t^N(\u_0,\xi) \|_{H^{-\frac 12-}}^2-4C   \big) \notag \\
&\hphantom{X}+2\ld^2  G_\ld(\P_N \vec \Phi_t^N(\u_0,\xi) ) \big( \| \pi_1 \P_N \vec \Phi_t^N(\u_0,\xi) \|_{H^{\frac 12-}}^2 +\| \pi_2 \P_N \vec \Phi_t^N(\u_0,\xi) \|_{H^{-\frac 12-}}^2   \big)    \notag  \\
&\le -\frac \ld4 G_\ld (\P_N \vec \Phi_t^N(\u_0,\xi)   )  \big( V_{\text{mod}}(\P_N \vec \Phi_t^N(\u_0,\xi)) -4 C   \big) +2\ld^2  G_\ld(\P_N \vec \Phi_t^N(\u_0,\xi) ) V_{\text{mod}}(\P_N \vec \Phi_t^N(\u_0,\xi))
\label{Dyn}
\end{align}

\noi
for any $t\ge 0$ and $N\in \N$, where $\vec \Phi_t^N(\cdot,\xi)$ is the flow map in \eqref{SdSG2}.
By taking $0<\ld \ll 1$ sufficiently small so that $\ld\ll \ld^2$, we have
\begin{align}
\eqref{Dyn}\le -\frac \ld8  G_\ld (\P_N \vec \Phi_t^N(\u_0,\xi)) V_{\text{mod}}(\P_N \vec\Phi_t^N(\u_0,\xi)) +C\ld  G_\ld (\P_N \vec \Phi_t^N(\u_0,\xi)   ).
\label{DDRI}
\end{align}

\noi
Then, from \eqref{DDRI}, \eqref{exp0} and the simple inequality $re^{ r}+1 \ge e^{ r}$ for any $r\ge 0$, we obtain that there exist $0<\ld_0\ll 1$, and $\g>0$ such that for any $0<\ld\le \ld_0$, we have   
\begin{align}
\eqref{Dyn}&\le -\frac 18 G_\ld (\P_N\Phi_t^N(\u_0,\xi))+\frac 18+C\ld  G_\ld (\P_N\Phi_t^N(\u_0,\xi)   ) \notag \\
&\le  -\g G_\ld (\P_N\Phi_t^N(\u_0,\xi))+\frac 18.
\label{DDRI1}
\end{align}

\noi
By applying the It\^o formula to $G_\ld(\P_N\vec \Phi_t^N(\u_0,\xi))$ and taking the expectation in both side of \eqref{DDRI1}, we have
\begin{align*}
\frac d{dt} \E\big[ G_\ld(\P_N \vec \Phi_t^N(\u_0,\xi) ) \big]=\E \big[ \L_NG_\ld(\P_N \vec \Phi_t^N(\u_0,\xi) ) \big] \le -\g \E\big[G_\ld(\P_N \vec \Phi_t^N(\u_0,\xi)) \big]+\frac 18. 
\end{align*}

\noi
From the Grownwall's inequality, we obtain that there exists $C>0$ such that
\begin{align}
\E\big[  G_\ld(\P_N \vec \Phi_t^N(\u_0,\xi) ) \big]\le Ce^{-\g t}G_\ld(\P_N \u_0)+C
\label{KS2}
\end{align}  

\noi
for any $t\in \R$ and $N\in \N$. It follows from \eqref{KS2}, \eqref{exp0}, and  \eqref{equiv} that 
\begin{align*}
\E\Big[ \exp\big(\ld \| \P_N\vec \Phi_t^N(\u_0,\xi)\|_{\H^{\frac 12-}}^2   \big)  \Big] \le C e^{-\g t} \exp(\ld \|\P_N \u_0\|_{\H^{\frac 12-} }^2  )+K.
\end{align*}

\noi
for some constant $C,K>0$. By using the Lebesgue monotone convergence theorem, we can pass the limit as $N\to \infty$ and so complete the proof of Lemma \ref{LEM:Lya}.

\end{proof}

\begin{remark}\rm
\label{REM:MODEN}
We point out that compared to \eqref{VV1}, the modified energy gives us 
\begin{align*}
\frac{d}{dt}  V_{\text{mod}}(\vec L_t(\u_0,\xi) )=-\frac 12 \|  \vec  L_t(\u_0,\xi)(t)    \|_{\H^{\frac 12-}}^2,
\end{align*} 

\noi
which is reflected on \eqref{gen} by exhibiting the additional smoothing effect.
\end{remark}

\appendix
\section{Large-time smoothing estimate and unique ergodicity}
\label{SEC:APP}

In Subsection \ref{SUBSEC:EXPERG}, we already proved that the Gibbs measure is the unique invariant measure under the flow \eqref{SdSG} by obtaining the contraction property \eqref{STARCONTRAC}. In this appendix, we give a discussion regarding a suitable large-time smoothing estimate (see Proposition \ref{PROP:diff}) and so obtain the unique ergodicity,  based on the method introduced in \cite{FT}.  We point out that showing only the unique ergodicity does not rely on the suitable irreducibility condition and Lyapunov structure whose existence were essential to prove the exponential ergodicity result. 

Before we go ahead, we recall the following notations presented in \cite[Section 3.1]{HM06}. The total variation distance between two probability measures is given by the Wasserstein-1 distance $\W_{d_{\text{TV}}}$ corresponding to the metric
\begin{align*}
d_{\text{TV}}(x,y)=\begin{cases}
1 \quad &\text{if} \quad x\neq y\\
0 \quad &\text{if} \quad x=y.
\end{cases}
\end{align*}

\noi
This metric completely separates all points in the phase space, resulting in a total loss of information about the underlying topology. It suggests the following definition, which provides one way of approximating the total variation distance between two probability measures by a sequence of Wasserstein-1 distances.
Let us consider $\{d_{n}\}_{n\in \N}$ to be the following totally separating system of (pseudo)-metrics on $\H^{\frac 12-}$:
\begin{align}
d_{n}(\u_0,\u_1)=1\wedge n \|\u_0-\u_1\|_{\H^{\frac 12-}}. 
\label{dn0}
\end{align}

\noi
Notice that $\lim_{n\to \infty} d_n(\u_0, \u_1)=1$ for all $\u_0 \neq \u_1$. We record the following lemma. For the proof of this lemma, see \cite[Corollary 3.5]{HM06}. 
\begin{lemma}\label{LEM:dnlim}
Let $\{d_n\}_{n\in \N}$ be the totally separating system in \eqref{dn0}. Then, we have
\begin{align*}
\W_{d_{\textup{TV}}}(\mu_1,\mu_2)=\lim_{n\to \infty}  \W_{d_n}(\mu_1, \mu_2)  
\end{align*}

\noi
for any two positive measures $\mu_1$ and $\mu_2$ with equal mass.
\end{lemma}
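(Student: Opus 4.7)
The statement asserts that the Wasserstein-1 distance with respect to the total variation metric $d_{\textup{TV}}$ is the monotone limit of the Wasserstein-1 distances associated with the truncated pseudo-metrics $d_n(\u_0,\u_1) = 1 \wedge n\|\u_0-\u_1\|_{\H^{\frac 12-}}$. My plan is to exploit three immediate structural properties of $\{d_n\}$: (i) $d_n \le d_{n+1}$, (ii) $d_n \le d_{\textup{TV}}$, and (iii) $d_n(\u_0,\u_1) \to d_{\textup{TV}}(\u_0,\u_1)$ pointwise as $n\to\infty$. After normalizing so that $\mu_1,\mu_2$ are probability measures (they have equal mass), the upper bound $\W_{d_n}(\mu_1,\mu_2) \le \W_{d_{\textup{TV}}}(\mu_1,\mu_2)$ is immediate by integrating $d_n \le d_{\textup{TV}}$ against any coupling in $\mathcal{C}(\mu_1,\mu_2)$ and taking the infimum; moreover, the same monotonicity in $n$ shows that $\{\W_{d_n}(\mu_1,\mu_2)\}$ is a monotone non-decreasing sequence bounded above by $\W_{d_{\textup{TV}}}(\mu_1,\mu_2)$, hence convergent.

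\textbf{Lower bound via weak limits of couplings.} The substantial part of the argument is the matching lower bound $\lim_{n\to\infty} \W_{d_n}(\mu_1,\mu_2) \ge \W_{d_{\textup{TV}}}(\mu_1,\mu_2)$. I would pick, for each $n$, an almost-optimal coupling $\pi_n \in \mathcal{C}(\mu_1,\mu_2)$ satisfying $\int d_n\, d\pi_n \le \W_{d_n}(\mu_1,\mu_2) + 1/n$. Since $\mu_1$ and $\mu_2$ are tight on the Polish space $\H^{\frac 12-}(\T)$ and the marginals are prescribed, the set $\mathcal{C}(\mu_1,\mu_2)$ is itself tight, so Prokhorov's theorem yields a subsequence $\pi_{n_k} \rightharpoonup \pi_\infty$; the weak limit $\pi_\infty$ remains an element of $\mathcal{C}(\mu_1,\mu_2)$ by continuity of the coordinate projections. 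For every fixed $m$, the function $d_m$ is bounded and continuous on $\H^{\frac 12-}\times \H^{\frac 12-}$ (the composition of the continuous norm with the truncation $t \mapsto 1 \wedge mt$), so combining weak convergence with the monotone domination $d_m \le d_{n_k}$ for $n_k \ge m$ gives
\begin{align*}
\int d_m\, d\pi_\infty \;=\; \lim_{k \to \infty} \int d_m\, d\pi_{n_k} \;\le\; \liminf_{k\to\infty} \int d_{n_k}\, d\pi_{n_k} \;\le\; \lim_{n\to\infty} \W_{d_n}(\mu_1,\mu_2).
\end{align*}
Sending $m \to \infty$ and invoking the monotone convergence theorem (using $d_m \uparrow d_{\textup{TV}}$ pointwise) upgrades the left-hand side to $\int d_{\textup{TV}}\, d\pi_\infty$, which is at least $\W_{d_{\textup{TV}}}(\mu_1,\mu_2)$ since $\pi_\infty$ is a coupling. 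Combining with the upper bound closes the loop.

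\textbf{Main obstacle.} The principal technical point is justifying the two-stage passage to the limit in the display above: first $k \to \infty$ with $m$ fixed, which requires boundedness and continuity of each $d_m$ on the product space (a direct consequence of continuity of the $\H^{\frac 12-}$-norm composed with a continuous truncation), and then $m \to \infty$ on $\int d_m\, d\pi_\infty$, handled by monotone convergence. A subsidiary but routine check is that the weak limit $\pi_\infty$ retains the marginals $\mu_1,\mu_2$, which follows from continuity of the coordinate projection maps. Notably, this lemma is a purely measure-theoretic consequence of the structure of the pseudo-metrics $\{d_n\}$ and does not rely on any feature of the stochastic sine-Gordon dynamics, the Lyapunov estimate, or the irreducibility condition developed earlier in the paper.
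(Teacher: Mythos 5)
Your proof is correct. The paper does not prove this lemma itself but defers to the citation \cite[Corollary 3.5]{HM06}, and your argument --- the monotone domination $d_n \le d_{n+1}\le d_{\textup{TV}}$ with $d_n \uparrow d_{\textup{TV}}$ pointwise for the easy inequality, plus tightness of $\mathcal{C}(\mu_1,\mu_2)$, Prokhorov, and the two-stage passage to the limit (first in the coupling index against the fixed bounded continuous $d_m$, then $m\to\infty$ by monotone convergence) for the reverse inequality --- is essentially the same compactness argument used in that reference.
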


\begin{remark}\rm
Recall that we denoted the flow of linear equation \eqref{SDLW} by $\vec L_t(\u_0,\xi)$ in \eqref{URISLIN}. Notice that 
\begin{align}
\vec L_t(\u_2^0;\xi)=\vec L_t(\u_1^0;\xi)+\vec S(t)(\u_2^0-\u_1^0).
\label{CURSIS}
\end{align}

\noi
Suppose that $\mu_1$ and $\mu_2$ are invariant measures under the linear flow \eqref{SDLW}. Then, thanks to the invariance of $\mu_1$ and $\mu_2$ and \eqref{CURSIS}, we have 
\begin{align*}
\W_{d_n}(\mu_1,\mu_2)&=\sup_{[F]_{\Lip_{d_n}} \le 1}\bigg(\int F(\u_1^0) d\mu_1(\u_1^0)-\int F(\u_2^0) d\mu_2 (\u_2^0)   \bigg)\\
&\le \int \E\Big[1\wedge n\|\vec S(t)(\u_2^0-\u_1^0)  \|_{\H^{\frac 12-}} \Big] d\mu_1(\u_1^0)d\mu_2 (\u_2^0)\\
&\le \int \E\Big[1\wedge ne^{-\frac t2}\|\u_2^0-\u_1^0  \|_{\H^{\frac 12-}} \Big] d\mu_1(\u_1^0)d\mu_2 (\u_2^0).
\end{align*}

\noi
By  the dominated convergence theorem when taking $t\to \infty$, we have $\W_{d_n}(\mu_1,\mu_2)=0$ for every $n \in \N$. Thanks to Lemma \ref{LEM:dnlim}, we have $\W_{d_{\textup{TV}}}(\mu_1,\mu_2)=0$ and so $\mu_1=\mu_2$. This shows that the linear stochastic flow \eqref{SDLW} has the unique invariant measure\footnote{Regardless of the spatial dimension $d$}.  

\end{remark}

As in \cite{FT}, we now find a control (Girsanov shift) $h$ such that perturbing the noise $\xi+h$ by $h$  control a given perturbation in the initial condition (see Lemma \ref{LEM:Gir}). The existence of a proper control $h$ is to guarantee the large-time smoothing estimate (Proposition \ref{PROP:diff}) and so the unique ergodicity for the Gibbs measure $\rhoo$ under the flow \eqref{SdSG}. Before we prove Proposition \ref{PROP:diff}, we need the following lemma.
\begin{lemma}[Girsanov shift]\label{LEM:Gir}
Let $\u_1^0, \u_2^0 \in \H^{\frac 12-}(\T)$. We define the following process
\begin{align}
h(t;\xi):=\frac 1{\sqrt{2}} \Big( \sin\big( \Phi_t(\u_1^0,\xi)+S(t)(\u_2^0-\u_1^0)    \big) -  \sin\big( \Phi_t(\u_1^0,\xi)    \big) \Big).
\label{shift0}
\end{align}

\noi
Then, $h(t)$ is adapted to the filtration $\{\F_t\}_{t\ge 0}$ induced by $\xi$ and satisfies
\begin{align}
\Phi_t(\u_2^0,\xi+h)=\Phi_t(\u_1^0,\xi)+S(t)(\u_2^0-\u_1^0).
\label{var}
\end{align}

\noi
Moreover, for any $p \ge 1$, we have
\begin{align}
\E\Big[\| h\|^p_{L^2([0,\infty); L_x^2)} \Big] \le \| \u_1^0-\u_2^0 \|_{\H^{\frac 12-}}^p.
\label{shift}
\end{align}

\end{lemma}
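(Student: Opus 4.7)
The plan is to verify the three assertions in order; all follow from the linearity of the wave propagator, the Lipschitz character of $\sin$, and the exponential decay \eqref{linest}.

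For the adaptedness, $\Phi_t(\u_1^0,\xi)$ is $\F_t$-measurable by the construction of the pathwise flow via the Da Prato-Debussche decomposition \eqref{Dapra1}, $S(t)(\u_2^0-\u_1^0)$ is deterministic, and composing with the continuous function $\sin$ preserves $\F_t$-measurability of $h(t;\xi)$.

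For the identity \eqref{var}, I set $u(t):=\Phi_t(\u_1^0,\xi)$, $L(t):=S(t)(\u_2^0-\u_1^0)$, and $w(t):=u(t)+L(t)$. Then $u$ satisfies SdSG with data $\u_1^0$ and driver $\xi$, while $L$ satisfies the homogeneous linear damped wave equation with data $\u_2^0-\u_1^0$. Summing these equations and using $\sqrt{2}\,h=\sin(u+L)-\sin(u)=\sin(w)-\sin(u)$ from \eqref{shift0} yields that $w$ satisfies the SdSG equation with data $\u_2^0$ and driver $\xi+h$. Pathwise uniqueness (Proposition \ref{PROP:GWP}) then identifies $w$ with $\Phi_t(\u_2^0,\xi+h)$. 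This uniqueness step continues to apply after the shift: since $h\in L^2_tL^2_x$, the Duhamel term $\stick_t(\xi+h)$ differs from $\stick_t(\xi)$ only by the smooth deterministic correction $\int_0^t\D(t-t')h(t')\,dt'$, leaving the contraction argument in \eqref{Pert} intact.

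For the bound \eqref{shift}, the $1$-Lipschitz property of $\sin$ gives the pathwise pointwise inequality $|h(t,x)|\le \tfrac{1}{\sqrt{2}}|L(t,x)|$, and hence $\|h(t)\|_{L^2_x}\le \tfrac{1}{\sqrt{2}}\|L(t)\|_{L^2_x}$. Combining the embedding $H^{\frac 12-}(\T)\embeds L^2(\T)$ with \eqref{linest} at regularity $\al=\frac 12-$ yields $\|L(t)\|_{L^2_x}\les e^{-t/2}\|\u_2^0-\u_1^0\|_{\H^{\frac 12-}}$. Integrating in $t\in[0,\infty)$ produces a deterministic bound $\|h\|_{L^2([0,\infty);L^2_x)}\les \|\u_2^0-\u_1^0\|_{\H^{\frac 12-}}$; raising to the $p$-th power and taking expectations (trivial, since the estimate is deterministic) delivers \eqref{shift}. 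The main (mild) subtlety is the pathwise uniqueness needed in verifying \eqref{var}, and even this is routine once one recognizes the specific choice of $h$ in \eqref{shift0} as exactly canceling the nonlinear discrepancy $\sin(w)-\sin(u)$ in the equation for $w$.
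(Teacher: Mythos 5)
Your argument is correct and follows essentially the same route as the paper's (very terse) proof: the identity \eqref{var} comes from adding the homogeneous linear equation for $S(t)(\u_2^0-\u_1^0)$ to the equation for $\Phi_t(\u_1^0,\xi)$ and observing that $h$ is chosen exactly to cancel the nonlinear discrepancy, while \eqref{shift} follows from the mean value theorem (your Lipschitz bound on $\sin$) combined with the exponential decay \eqref{linest}. Your treatment of the shifted-noise solution map via the extra Duhamel term $\int_0^t\D(t-t')h(t')\,dt'$ is a worthwhile detail that the paper leaves implicit.
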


\begin{proof}
By exploiting the strucutre of the equation \eqref{SdSG} and the definition of \eqref{shift0}, we can easily obtain the result \eqref{var}. By using the mean value theorem and \eqref{linest}, we have \eqref{shift}.
\end{proof}

We now state the following large-time smoothing estimate.
\begin{proposition}\label{PROP:diff}
Let $n\in \N $, $R>0$, and $\{\Pt{t}: t\ge 0\}$ be the Markov semigroup associated to \eqref{SdSG}. For any given $\u_1^0$, $\u_2^0\in B_R(\0)$, there exist $t_0=t_0(n,R)$, $0<K(R)<1$, and $K'(R)>0$ such that for any $t \ge t_0$, we have
\begin{align}
\big|\Pt{t}F(\u_1^0)-\Pt{t}F(\u_2^0)\big|\le (1-K(R))\big(1 \wedge K'(R)\|\u_1^0-\u_2^0 \|_{\H^{\frac 12-}}^\frac 12\big)
\label{URiS590}
\end{align}

\noi
for any Lipschitz $F$ with $[F]_{\Lip_{d_n}} \le 1$ and $\| F\|_{L^\infty}\le \frac 12$

\end{proposition}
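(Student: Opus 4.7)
The plan is to combine the Girsanov shift from Lemma \ref{LEM:Gir} with the exponential contraction $\|\vec S(t)\u\|_{\H^{1/2-}} \lesssim e^{-t/2}\|\u\|_{\H^{1/2-}}$ from \eqref{linest}. Let $\EE(-h) := \exp\big(-\int_0^t \jb{h(s), d\xi(s)}_{L^2_x} - \frac12 \int_0^t \|h(s)\|_{L^2_x}^2 ds\big)$ denote the stochastic exponential; the deterministic bound $\int_0^t \|h(s)\|_{L^2_x}^2 ds \le C\|\u_1^0 - \u_2^0\|_{\H^{1/2-}}^2$ (which follows from $|\sin a - \sin b| \le |a-b|$ combined with $\|\vec S(s)\u\|_{L^2_x} \lesssim e^{-s/2}\|\u\|_{\H^{1/2-}}$) ensures Novikov's condition, so that $d\mathbb{Q} := \EE(-h) d\PP$ defines a probability measure under which $\xi + h$ is a cylindrical Wiener process. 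Combining Girsanov with the shift identity \eqref{var} produces the key decomposition
\[
\Pt{t}F(\u_1^0) - \Pt{t}F(\u_2^0) = \E\big[F(\vec\Phi_t(\u_1^0,\xi))(1-\EE(-h))\big] + \E\big[\EE(-h)\big(F(\vec\Phi_t(\u_1^0,\xi)) - F(\vec\Phi_t(\u_1^0,\xi) + \vec S(t)(\u_2^0-\u_1^0))\big)\big].
\]

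The second (Lipschitz) term is easy: using $[F]_{\Lip_{d_n}} \le 1$, $\E[\EE(-h)]=1$, and \eqref{linest}, it is bounded by $1 \wedge n e^{-t/2}\|\u_1^0 - \u_2^0\|_{\H^{1/2-}}$, which I will make negligible by choosing $t_0 = t_0(n, R)$ so large that $2R n e^{-t_0/2}$ is small. For the first (Radon--Nikodym) term, the constraint $\|F\|_{L^\infty} \le \frac12$ reduces it to $\frac12 \E|1-\EE(-h)|$, and I would estimate this quantity in two complementary ways. The Girsanov relative-entropy computation $H(\mathbb{Q}|\PP) = \frac12 \E_\mathbb{Q}[\int_0^t \|h\|_{L^2_x}^2 ds] \le \frac{C}{2}\|\u_1^0 - \u_2^0\|_{\H^{1/2-}}^2$ combined with Pinsker's inequality yields the linear smoothing bound $\E|1-\EE(-h)| \le C\|\u_1^0-\u_2^0\|_{\H^{1/2-}}$. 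Separately, exponential-martingale concentration applied to $M_t := \int_0^t \jb{h, d\xi}_{L^2_x}$ (whose quadratic variation is a.s.\ bounded by $4CR^2$) yields $\PP(\EE(-h) \ge c(R)) \ge p(R) > 0$ uniformly over $\u_1^0, \u_2^0 \in B_R$, and the identity $\E|1-\EE(-h)| = 2(1 - \E[\EE(-h) \wedge 1])$ then gives $\frac12 \E|1-\EE(-h)| \le 1 - q(R)$ for some $q(R) > 0$.

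Combining the two estimates with the Lipschitz bound (and choosing $t_0$ large enough) produces, for $t \ge t_0$, simultaneously the uniform contraction $|\Pt{t}F(\u_1^0) - \Pt{t}F(\u_2^0)| \le 1 - K(R)$ and the smoothing $|\Pt{t}F(\u_1^0) - \Pt{t}F(\u_2^0)| \le A(R)\|\u_1^0 - \u_2^0\|_{\H^{1/2-}}$. Since $\|\u_1^0 - \u_2^0\|_{\H^{1/2-}} \le 2R$ on $B_R$, the smoothing bound self-degrades to $A(R)\sqrt{2R}\,\|\u_1^0-\u_2^0\|_{\H^{1/2-}}^{1/2}$, which matches the form $(1-K(R)) K'(R)\|\u_1^0-\u_2^0\|_{\H^{1/2-}}^{1/2}$ upon setting $K'(R) := A(R)\sqrt{2R}/(1-K(R))$. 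Taking the minimum of both bounds yields exactly $(1-K(R))(1 \wedge K'(R)\|\u_1^0-\u_2^0\|_{\H^{1/2-}}^{1/2})$, as required.

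The hard part will be producing the strict factor $1 - K(R)$ uniformly on the full ball $B_R$: Pinsker's inequality alone grows linearly in $\|\u_1^0-\u_2^0\|_{\H^{1/2-}}$ and exceeds $1$ as soon as this norm becomes comparable to $R$, so the concentration-of-measure argument on the Girsanov martingale $M_t$ is essential to keep the Radon--Nikodym density $\EE(-h)$ bounded away from zero on a set of uniformly positive probability across all pairs $\u_1^0, \u_2^0 \in B_R$.
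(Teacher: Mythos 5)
Your proposal is correct and follows essentially the same route as the paper: the same Girsanov shift $h$ from Lemma \ref{LEM:Gir}, the same two-term decomposition into a Lipschitz term (killed by the exponential decay of $\vec S(t)$, with $t_0=t_0(n,R)$ chosen large) and a Radon--Nikodym term, and the same strategy of establishing two complementary bounds on $\E\big[|\EE(h(t))-1|\big]$. The only difference is technical: where you invoke Pinsker's inequality for the smoothing bound and exponential-martingale concentration plus the identity $\E|1-\EE|=2(1-\E[\EE\wedge 1])$ for the uniform bound, the paper packages both into the elementary inequality of Lemma \ref{LEM:EXURIS88} with a tunable cutoff $L$ (taken large depending on $R$ for the uniform bound, and of order $\|\u_1^0-\u_2^0\|_{\H^{1/2-}}^{1/2}$ for the smoothing bound); your Pinsker route in fact yields a linear smoothing estimate, which you then correctly weaken to the stated square-root form on the bounded set $B_R$.
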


Notice that the asymptotic strong Feller property 
comes as a direct corollary of Proposition \ref{PROP:diff}. We give the proof of Proposition \ref{PROP:diff} at the end of this appendix. We are now ready to prove the unique ergodicity by only assuming Proposition \ref{PROP:diff}. 
\
\begin{proof}[Proof of the unique ergodicity]
Let $\rho_1$ and $\rho_2$ be two distinct ergodic invariant measures. Then, they are mutually singular i.e.~$\rho_1 \perp \rho_2$ (see \cite[Proposition 3.2.5]{DZ}). Fix $n\in \N$. We choose $R>0$ sufficiently large so that $\displaystyle{\min_{j=1,2}}\rho_{j,R}>\frac 12$, where $\rho_{j,R}:=\rho_j(B_R(\0))$. 
Thanks to the Monge–Kantorowitch duality as in \eqref{MGDU1} and the invariance of $\rho_1$ and $\rho_2$, we have that for any $t=t(n,R)>0$ and $K(R)>0$ given by Proposition \ref{PROP:diff},
\begin{equation}
\begin{split}
\W_{d_n}(\rho_1,\rho_2)&=\sup_{\substack{[F]_{\Lip_{d_n}} \le 1\\ \| F\|_{L^\infty}\le \frac 12} }\bigg[\int  F(\u_1^0) d\rho_1(\u_1^0)-\int F(\u_2^0) d\rho_2(\u_2^0) \bigg]\\
&=\sup_{\substack{[F]_{\Lip_{d_n}} \le 1\\ \| F\|_{L^\infty}\le \frac 12}} \iint \E\big[F(\vec \Phi_t(\u_1^0,\xi)) \big]-\E\big[F(\vec \Phi_t(\u_2^0,\xi)) \big] d\rho_1(\u_1^0)d\rho_2(\u_2^0)\\
&\le \sup_{\substack{[F]_{\Lip_{d_n}} \le 1\\ \| F\|_{L^\infty}\le \frac 12}}  \bigg| \iintt_{B_R(0)\times B_R(0)} \E\big[F(\vec \Phi_t(\u_1^0,\xi)) \big]-\E\big[F(\vec \Phi_t(\u_2^0,\xi)) \big] d\rho_1(\u_1^0)d\rho_2(\u_2^0)   \bigg|\\
&\hphantom{XXX}+1-\rho_{1,R}\rho_{2,R}\\
&\le (1-K(R))\rho_{1,R}\rho_{2,R}+1-\rho_{1,R}\rho_{2,R}\le 1-\frac 14K(R).
\end{split}
\label{dn}
\end{equation}

\noi
Since the right hand side of \eqref{dn} does not depend on $n \in N$, by taking $n\to \infty$ and Lemma \ref{LEM:dnlim}, we obtain 
\begin{align*}
\W_{d_{\textup{TV}}}(\rho_1,\rho_2)\le 1-\frac 14K(R)<1.
\end{align*}

\noi
This contradicts the fact that $\rho_1$ and $\rho_2$ are distinct ergodic invariant measures (and so $\rho_1\perp\rho_2$). Hence, there exists only one ergodic invariant measure.
By the ergodic decomposition theorem (see \cite[Theorem 5.1]{Hairlec}), every invariant measure is a convex combination of ergodic measures. Therefore, \eqref{SdSG} has the unique invariant measure.
\end{proof}

Before we present the proof of Proposition \ref{PROP:diff}, we introduce the following lemma whose proof is presented at \cite[Lemma 5.11]{FT}.
\begin{lemma}\label{LEM:EXURIS88}
Let $1\le p <\infty$ and $X$ be a random variable so that $\E\big[e^{X}\big]=1$ and $\E\big[|X|^p\big]<\infty$. Then, for any $L>0$, we have
\begin{align}
\E\big[ |e^{X}-1| \big]\le 2\bigg(1-e^{-L}+e^{-L}L^{-p}\E\big[|X|^p\big]\bigg).
\label{exp}
\end{align}

\end{lemma}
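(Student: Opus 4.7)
The plan is to reduce $\E[|e^X-1|]$ to a one-sided quantity using the normalization $\E[e^X]=1$, and then bound the one-sided quantity by means of a carefully chosen pointwise decomposition that separates the bulk contribution $(1-e^{-L})$ from the Markov tail contribution on $\{X<-L\}$, while retaining the crucial factor $e^{-L}$ in the tail bound.

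First, I would observe that the hypothesis $\E[e^X]=1$ forces $\E[(e^X-1)^+]=\E[(1-e^X)^+]$, so
\begin{align*}
\E[|e^X-1|]\;=\;\E[(e^X-1)^+]+\E[(1-e^X)^+]\;=\;2\E[(1-e^X)^+].
\end{align*}
This reduces the task to estimating $\E[(1-e^X)^+]$ from above. Second, I would establish the pointwise inequality
\begin{align*}
(1-e^X)^+ \;\le\; (1-e^{-L}) \;+\; (e^{-L}-e^X)^+\qquad\text{for all }X\in\R,
\end{align*}
by a short three-case verification: for $X\ge 0$ the left side vanishes; for $-L\le X<0$ we have $1-e^X\le 1-e^{-L}$ while the second summand vanishes; and for $X<-L$ the algebraic identity $1-e^X=(1-e^{-L})+(e^{-L}-e^X)$ gives equality. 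Third, I would take expectations and bound the residual term using that $(e^{-L}-e^X)^+$ is supported on $\{X<-L\}$ and is dominated there by $e^{-L}$, so that
\begin{align*}
\E[(e^{-L}-e^X)^+]\;\le\; e^{-L}\PP(X<-L)\;\le\; e^{-L}L^{-p}\E[|X|^p],
\end{align*}
where the last step is Markov's inequality applied to $|X|^p$. Combining these three steps yields $\E[(1-e^X)^+]\le (1-e^{-L})+e^{-L}L^{-p}\E[|X|^p]$, and multiplying by two produces \eqref{exp}.

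The main obstacle is retaining the factor $e^{-L}$ in front of $L^{-p}\E[|X|^p]$. A direct splitting based on $\{X\ge -L\}$ versus $\{X<-L\}$ together with the trivial bound $(1-e^X)^+\le 1$ on the tail only produces $\E[(1-e^X)^+]\le (1-e^{-L})+\PP(X<-L)$, which is strictly weaker than the target by a factor $e^{-L}$. The key insight is that on $\{X<-L\}$ the quantity $1-e^X$ is already very close to $1$, so one should first subtract off the deterministic piece $(1-e^{-L})$ and estimate only the remaining residual $(e^{-L}-e^X)^+$; this residual is bounded pointwise by $e^{-L}$, which is exactly where the extra exponential factor is harvested before applying Markov.
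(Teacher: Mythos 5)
Your proof is correct and complete: the reduction $\E[|e^X-1|]=2\E[(1-e^X)^+]$ from $\E[e^X]=1$, the pointwise decomposition $(1-e^X)^+\le(1-e^{-L})+(e^{-L}-e^X)^+$, and the bound $e^{-L}\PP(X<-L)\le e^{-L}L^{-p}\E[|X|^p]$ via Markov all check out, and you correctly identified that the naive splitting loses the factor $e^{-L}$. The paper itself gives no proof, deferring to \cite[Lemma 5.11]{FT}, and your argument is the standard one used there.
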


We are now ready to prove Proposition \ref{PROP:diff}.
\begin{proof}[Proof of Proposition \ref{PROP:diff}]
From \eqref{var}, we have
\begin{align}
&\E\big[F(\vec \Phi_t(\u_1^0,\xi))\big]-\E\big[F(\vec \Phi_t(\u_2^0,\xi))\big]\notag \\
&=\bigg(\E\big[F\big(\vec \Phi_t(\u_2^0,\xi+h)-\vec S(t)(\u_2^0-\u_1^0) \big) \big]-\E\big[ F(\vec \Phi_t(\u_2^0,\xi+h ) )       \big] \bigg) \notag \\
&\hphantom{X} +\bigg( \E\big[ F(\vec \Phi_t(\u_2^0,\xi+h ) ) -\E\big[F(\vec \Phi_t(\u_2^0,\xi))\big]      \big] \bigg)=\text{I}+\II.
\label{dec1}
\end{align}

\noi
We first consider the first term $\text{I}$ in \eqref{dec1}. Since $F\in \Lip_{d_n}$, we have
\begin{align}
\text{I}&\le [ F]_{\Lip_{d_n}}\E\big[1\wedge n\| \vec S(t)(\u_2^0-\u_1^0)\|_{\H^{\frac 12-}} \big]\le [F ]_{\Lip_{d_n}}\big(1\wedge 2ne^{-\frac t2}\|\u_2^0-\u_1^0 \|_{\H^{\frac 12-} }\big).
\label{Lip}
\end{align}

\noi
Next, we consider the second term $\II$ in \eqref{dec1}. We set the stochastic exponential of $h(t)$ in \eqref{shift0} as follows
\begin{align*}
\EE(h(t)):=\exp\bigg\{ -\frac 12 \int_0^t \| h(t')\|_{L^2_x}^2 dt'+\int_0^t \jb{h(t'),d\xi(t')}_{L^2_x} \bigg \}.
\end{align*}

\noi
Notice that $h(t)$ satisfies the Novikov condition and so $d\mathbb{Q}=\EE(h(t))d\PP$ is a probability measure. It follows from Girsanov's theorem that 
\begin{align}
\II=\E\big[ F(\vec \Phi_t(\u_2^0,\xi))( \EE(h(t))-1 ) \big] \le \| F\|_{L^\infty } \E\big[  |\EE(h(t))-1| \big].
\label{Gir}
\end{align}

\noi
Thanks to Lemma \ref{LEM:EXURIS88}, It\^o isometry, and \eqref{shift},  we have 
\begin{align}
\eqref{Gir}&\le 2 \| F \|_{L^\infty } \bigg[ 1-e^{-L}+e^{-L}L^{-1}\E \big[ \| h\|^2_{L_{t,x}^2} +\| h\|_{L_{t,x}^2}  \big]  \bigg] \notag \\
& \le 2 \| F\|_{L^\infty }\bigg[1-e^{-L}\Big(1-L^{-1}\max\big\{\|\u_1^0-\u_2^0 \|_{\H^{\frac 12-} }, \|\u_1^0-\u_2^0 \|_{\H^{\frac 12-} }^2 \big\} \Big)  \bigg],
\label{Gir2}
\end{align}

\noi
where $L\gg 1$ will be specified later.
Then, from \eqref{dec1}, \eqref{Lip},\eqref{Gir}, and \eqref{Gir2}, we obtain 
\begin{align}
&\E\big[F(\vec \Phi_t(\u_1^0,\xi))\big]-\E\big[F(\vec \Phi_t(\u_2^0,\xi))\big] \notag \\
&\le [ F]_{\Lip_{d_n}}\big(1\wedge 2ne^{-\frac t2}\|\u_2^0-\u_1^0 \|_{\H^{\frac 12-} }\big)\notag\\
&\hphantom{X}+2 \| F\|_{L^\infty }\bigg[1-e^{-L}\Big(1-L^{-1}\max\big\{\|\u_1^0-\u_2^0 \|_{\H^{\frac 12-} }, \|\u_1^0-\u_2^0 \|_{\H^{\frac 12-} }^2 \big\} \Big)  \bigg].
\label{e1}
\end{align}

\noi
We first consider the second term in \eqref{e1}. By choosing $L=L(R)$ sufficiently large so that $\max\{R,R^2\}L^{-1}\ll 1 $, we have
\begin{align}
1-e^{-L}\Big(1-L^{-1}\max\big\{\|\u_1^0-\u_2^0 \|_{\H^{\frac 12-} }, \|\u_1^0-\u_2^0 \|_{\H^{\frac 12-} }^2 \big\} \Big)   \le 1-\frac 18e^{-L(R)}.
\label{e2}
\end{align}

\noi
Next, by choosing $t_R:=t_R(n,R,L(R))$ sufficiently large, we obtain
\begin{align}
\big(1\wedge 2ne^{-\frac 12  t_R}\|\u_2^0-\u_1^0 \|_{\H^\alpha}\big)\le 2\big(1\wedge 2nRe^{-\frac 12t_R} \big)\le \frac 1{16} e^{-L(R)}.
\label{e3}
\end{align}

\noi
Hence, by combining \eqref{e1}, \eqref{e2}, and \eqref{e3}, we have that for any $t \ge t_R$,
\begin{align}
\sup_{\substack{[F]_{\Lip_{d_n}} \le 1\\ \| F\|_{L^\infty}\le \frac 12} }\bigg|\E\big[F(\vec \Phi_t(\u_1^0,\xi))\big]-\E\big[F(\vec \Phi_t(\u_2^0,\xi))\big] \bigg|\le 1-\frac 1{16}e^{-L(R)}:=1-K(R),
\label{c1}
\end{align}

\noi
which implies the first part in \eqref{URiS590}. 

We now consider the case $\|\u_1^0-\u_2^0\|_{\H^\frac 12-} \ll 1$ to obtain the second part in \eqref{URiS590}. By choosing $t_1=t_1(n) \gg 1$, we have
\begin{align}
1\wedge 2ne^{-\frac t2}\|\u_2^0-\u_1^0 \|_{\H^{\frac 12-} }\le \|  \u_2^0-\u_1^0   \|_{\H^{\frac 12-} } \le \|  \u_2^0-\u_1^0 \|_{\H^{\frac 12-} }^\frac 12.
\label{MULchi0}
\end{align}

\noi
for any $t\ge t_1$. We choose $L=R^{-\frac 12}\|\u_1^0-\u_2^0 \|_{\H^{\frac 12-}}^\frac 12$. Then, we have $e^{-L}\ge 1-L=1-R^{-\frac 12}\|\u_1^0-\u_2^0 \|_{\H^{\frac 12-}}^\frac 12$. Therefore, from \eqref{e1} and \eqref{MULchi0}, we obtain that for any $t\ge t_1$,
\begin{align}
\sup_{\substack{[F]_{\Lip_{d_n}} \le 1\\ \| F\|_{L^\infty}\le \frac 12} }\bigg|\E\big[F(\vec \Phi_t(\u_1^0,\xi))\big]-\E\big[F(\vec \Phi_t(\u_2^0,\xi))\big] \bigg|&\le (1+R^{-\frac 12}+e^{-L}R^{\frac 12}) \|  \u_2^0-\u_1^0 \|_{\H^{\frac 12-}}^\frac 12 \notag \\
&\le C'(R)\|  \u_2^0-\u_1^0 \|_{\H^{\frac 12-}}^\frac 12.
\label{c2}
\end{align}

\noi
By combining \eqref{c1} and \eqref{c2}, we conclude that there exists $t_0=t_0(n,R)$ such that for any $t \ge t_0$,
\begin{align*}
\sup_{\substack{[F]_{\Lip_{d_n}} \le 1\\ \| F\|_{L^\infty}\le \frac 12} }\bigg|\E\big[F(\vec \Phi_t(\u_1^0,\xi))\big]-\E\big[F(\vec \Phi_t(\u_2^0,\xi))\big] \bigg| \le  (1-K(R))\big(1 \wedge K'(R)\|\u_1^0-\u_2^0 \|_{\H^{\frac 12-}}^\frac 12\big),
\end{align*}

\noi
where $K'(R)=\frac{C'(R)}{1-K(R)}$. Therefore, we conclude the proof of Proposition \ref{PROP:diff}. 

\end{proof}

\begin{ackno}\rm
The author would like to thank Pavlos Tsatsoulis for many helpful discussions.
\end{ackno}

\end{document}